\documentclass[letterpaper]{amsart}

%Packages
%%%%%%%%%%%%%%%%%%%%%%%%%%%%%%%%%%%%%%%%%%%%%%%
\usepackage[alphabetic,initials,nobysame]{amsrefs}
\usepackage{amssymb,mathtools,mathrsfs,enumitem,color,comment}
\usepackage{pgfplots} \pgfplotsset{compat=1.18}
\usetikzlibrary{calc,shapes.geometric}

\usepackage[bookmarksdepth=2]{hyperref}
%%%%%%%%%%%%%%%%%%%%%%%%%%%%%%%%%%%%%%%%%%%%%%%

\BibSpec{collection.article}{%
	+{}  {\PrintAuthors}				{author}
	+{,} { \textit}                  	{title}
	+{.} { }                         	{part}
	+{:} { \textit}                  	{subtitle}
	+{,} { \PrintContributions}      	{contribution}
	+{,} { \PrintConference}         	{conference}
	+{}  {\PrintBook}                	{book}
	+{,} { }                         	{booktitle}
	+{,} { }						 	{series}
	+{,} { }						 	{publisher}
	+{,} { }						 	{address}
	+{,} { \PrintDateB}              	{date}
	+{,} { pp.~}                     	{pages}
	+{,} { }                         	{status}
	+{,} { \PrintDOI}                	{doi}
	+{,} { available at \eprint}     	{eprint}
	+{}  { \parenthesize}            	{language}
	+{}  { \PrintTranslation}        	{translation}
	+{;} { \PrintReprint}            	{reprint}
	+{.} { }                         	{note}
	+{.} {}                          	{transition}
	+{}  {\SentenceSpace \PrintReviews} {review}
}

%Theorems
%%%%%%%%%%%%%%%%%%%%%%%%%%%%%%%%%%%%%%%%%%%%%%%
\theoremstyle{plain}
\newtheorem{theorem}{Theorem}
\newtheorem{lemma}[theorem]{Lemma}
\newtheorem{proposition}[theorem]{Proposition}
\newtheorem{corollary}[theorem]{Corollary}

\theoremstyle{definition}

\theoremstyle{remark}
\newtheorem{remark}[theorem]{Remark}

\newtheorem{problem}[theorem]{Problem}

\numberwithin{equation}{section}
\numberwithin{theorem}{section}
%%%%%%%%%%%%%%%%%%%%%%%%%%%%%%%%%%%%%%%%%%%%%%%

%Math Symbols
%%%%%%%%%%%%%%%%%%%%%%%%%%%%%%%%%%%%%%%%%%%%%%%

\renewcommand{\bar}{\overline}
\newcommand{\R}{\mathbb R}

\newcommand{\C}{\mathbb C}
\newcommand{\D}{\mathbb D}

\newcommand{\N}{\mathbb N}

\DeclareMathOperator{\dist}{{\mathrm{dist}}}
\DeclareMathOperator{\diam}{{\mathrm{diam}}}

\DeclareMathOperator{\inter}{{\mathrm{int}}}
\DeclareMathOperator{\re}{{\mathrm{Re}}}
\DeclareMathOperator{\im}{{\mathrm{Im}}}

\DeclareMathOperator{\Mod}{Mod}

\DeclareMathOperator{\loc}{\mathrm{loc}}

\DeclareMathOperator{\llc}{\mathit{LLC}}
%%%%%%%%%%%%%%%%%%%%%%%%%%%%%%%%%%%%%%%%%%%%%%%

\title{Quasiconformal characterization of Schottky sets}
\author{Dimitrios Ntalampekos}
\address{Department of Mathematics, Aristotle University of Thessaloniki, Thessaloniki, 54152, Greece.}
\thanks{The author is partially supported by the ERC Starting Grant, Grant Agreement no. 101214615, GRComPaS}
\email{dntalam@math.auth.gr}
\date{\today}
\keywords{Schottky set, Sierpi\'nski carpet, Sierpi\'nski gasket, circle domain, uniformization, quasiconformal, Koebe's conjecture}
\subjclass[2020]{Primary 30C62, 30C20; Secondary 30F10, 37F32}

\begin{document}

	\begin{abstract}
	The complement of the union of a collection of disjoint open disks in the $2$-sphere is called a Schottky set. We prove that a subset $S$ of the $2$-sphere is quasiconformally equivalent to a Schottky set if and only if every pair of complementary components of $S$ can be mapped to a pair of open disks with a uniformly quasiconformal homeomorphism of the sphere. Our theorem applies to Sierpi\'nski carpets and gaskets, yielding for the first time a general quasiconformal uniformization result for gaskets. Moreover, it contains Bonk's uniformization result for carpets as a special case and does not rely on the condition of uniform relative separation that is used in relevant works. 
	\end{abstract}

\maketitle

\setcounter{tocdepth}{1}
\tableofcontents

\section{Introduction}

In this work we provide a necessary and sufficient condition so that a compact set in the Riemann sphere is quasiconformally equivalent to a Schottky set. We discuss some background before stating the main result.

\subsection{Background}
A \textit{(geometric) disk} in the Riemann sphere $\widehat \C$ is an open ball $B(a,r)$ in the spherical metric, where $0<r<\pi$.  The complement of the union of a collection of disjoint open (geometric) disks in the Riemann sphere is called a \textit{Schottky set}. Special cases of Schottky sets include closures of circle domains and round Sierpi\'nski carpets. A \textit{circle domain} is a domain in the Riemann sphere whose complementary components are closed disks or points. A \textit{Sierpi\'nski carpet} is a compact set in the sphere with empty interior whose complementary components are countably many Jordan regions, called \textit{peripheral disks}, with disjoint closures and with diameters shrinking to zero. A fundamental result of Whyburn \cite{Whyburn:theorem} states that all Sierpi\'nski carpets are homeomorphic to each other. A \textit{round} Sierpi\'nski carpet has the additional requirement that its peripheral disks are geometric disks.

It is desirable to find a quasiconformal homeomorphism of the sphere that transforms a compact subset of the sphere to a Schottky set or to a round Sierpi\'nski carpet, due to the strong geometric properties of the latter classes of sets. For example, they enjoy strong rigidity properties and they can be studied using dynamical methods with the aid of groups generated by reflections in the complementary disks; for instance, see \cites{HeSchramm:Rigidity, BonkKleinerMerenkov:schottky, Merenkov:relativeSchottky, Merenkov:localrigiditySchottky, BonkLyubichMerenkov:carpetJulia, NtalampekosYounsi:rigidity, Ntalampekos:rigidity_cned}.

A first uniformization result for round carpets was proved by McMullen \cite{McMullen:teichmuller}, who showed that if the limit set of a convex cocompact Kleinian group is a Sierpi\'nski carpet, then it is quasiconformally equivalent to a round carpet. Herron and Koskela \cite{HerronKoskela:QEDcircledomains} proved that each uniform domain can be transformed to a circle domain via a quasiconformal homeomorphism of the sphere. In particular, the closure of a uniform domain can be quasiconformally uniformized by a Schottky set. This generalized an earlier result of Herron \cite{Herron:uniform}. 

A few years later Schramm \cite{Schramm:transboundary} devised a powerful tool for the study of uniformization and rigidity problems in the plane, known as transboundary modulus; see Section \ref{section:transboundary}. This tool turned out to be of paramount importance for the study of Koebe's conjecture, a deep open problem in complex analysis that is closely related to the uniformization of Sierpi\'nski carpets. \textit{Koebe's conjecture} asserts that for every domain $\Omega\subset \widehat \C$ there exists a conformal map from $\Omega$ onto a circle domain. In contrast, in this work we are interested in quasiconformal homeomorphisms \textit{of the entire sphere} that transform a given set to a Schottky set or a carpet. 

Bonk \cite{Bonk:uniformization} used the full potential of transboundary modulus in order to provide a sufficient criterion for a Sierpi\'nski carpet to be quasiconformally equivalent to a round carpet. Let $S\subset \widehat \C$ be a Sierpi\'nski carpet and let $\{U_i\}_{i\in \N}$ be its peripheral disks. We say that the peripheral disks of $S$ are \textit{uniform quasidisks} if, for each $i\in I$, $U_i$ can be mapped to a geometric disk by a $K$-quasiconformal homeomorphism of $\widehat \C$, where $K\geq 1$ is a uniform constant. We say that the peripheral disks of $S$ are \textit{uniformly relatively separated} if there exists a constant $\delta>0$ such that the \textit{relative distance} of $U_i$ and $U_j$, defined by
\begin{align*}
\Delta(U_i,U_j) = \frac{\dist(U_i,U_j)}{\min\{\diam(U_i),\diam(U_j)\}},
\end{align*}
is bounded from below by $\delta$ for each $i\neq j$. Bonk proved that if the peripheral disks of $S$ are uniformly relatively separated uniform quasidisks, then there exists a quasiconformal homeomorphism of $\widehat \C$ that maps $S$ onto a round carpet. Trivially, the assumption of uniform quasidisks is necessary for the conclusion. The uniform relative separation prevents large peripheral disks from being too close to each other. This condition is obviously not necessary, since round carpets need not satisfy it. 

Bonk's excellent criterion was exploited in rigidity problems that appear in the study of the standard Sierpi\'nski carpet and of Julia sets \cites{BonkMerenkov:rigidity, BonkLyubichMerenkov:carpetJulia, BonkMerenkov:rigiditySpCarpets}, and was extended to non-planar carpets \cites{MerenkovWildrick:uniformization, Rehmert:thesis}. However, the removal or replacement of the assumption of uniform relative separation with a weaker assumption remained elusive.  It was observed by the author in \cite{Ntalampekos:CarpetsThesis}*{Prop.\ 3.1.6} that the assumption of uniform relative separation is necessary for the quasiconformal transformation of a round carpet to a square carpet (defined in the obvious manner). This provided some evidence that the result of Bonk is optimal.

Subsequent works of the author \cites{Ntalampekos:CarpetsThesis, Ntalampekos:uniformization_packing} removed the assumption of uniform relative separation (and also weakened the assumption of uniform quasidisks) at the cost of weakening the regularity of the uniformizing map, which transforms an arbitrary carpet to a round or square carpet. For instance, the uniformizing maps in \cite{Ntalampekos:CarpetsThesis} are termed \textit{carpet-quasiconformal} and are required to quasipreserve a generalization of transboundary modulus, called \textit{carpet modulus}. In particular, the uniformizing maps that appear in \cites{Ntalampekos:CarpetsThesis, Ntalampekos:uniformization_packing} are not quasiconformal in the usual sense in general. These results, therefore, provided further evidence that the separation assumption of Bonk cannot be relaxed. 

The quasiconformal uniformization of carpets whose peripheral disks are too close to each other, as well as, of limiting objects in which ``peripheral disks" can touch each other had withstood extensive investigation. In particular, a general uniformization result for sets that resemble the Sierpi\'nski gasket (in the sense that complementary components can touch each other) seemed to be out of reach. 

However, very recently, the author and Luo \cite{LuoNtalampekos:gasket} characterized, under some mild conditions, Julia sets of rational maps that can be quasiconformally uniformized by round \textit{gaskets}; see \cite{LuoNtalampekos:gasket}*{Definition 1.1} for the definition of a topological gasket. Specifically, it is shown that a Julia set that does not contain critical points can be quasiconformally uniformized by a round gasket if and only if it is a \textit{fat gasket}, i.e., boundaries of Fatou components intersect tangentially. Yet, the result and its proof rely heavily on the structure of Julia sets and on complex dynamical methods, and it is clear that the proof does not extend to arbitrary gaskets not arising from a dynamical framework.

In this work we overcome this obstacle and, as our main theorem, we provide a characterization of compact subsets of the sphere that are quasiconformally equivalent to Schottky sets, removing entirely the separation assumption of Bonk.

\subsection{Main result}
Let $U,V\subset \widehat \C$ be disjoint Jordan regions with distinct boundaries, i.e., $\partial U\neq \partial V$. We say that the pair $U,V$ is \textit{quasiconformally circularizable} if there exists a quasiconformal homeomorphism $\phi\colon \widehat \C\to \widehat \C$ such that $\phi(U)$ and $\phi(V)$ are (geometric) disks. If $\phi$ can be taken to be $K$-quasiconformal for some $K\geq 1$, then we say that the pair $U,V$ is $K$-quasiconformally circularizable. Note that the closures of $U$ and $V$ are allowed to intersect each other in at most one point. 

Let $\{U_i\}_{i\in I}$ be a collection of at least two disjoint Jordan regions in $\widehat \C$ with distinct boundaries (so we exclude the trivial scenario that the collection consists of only two Jordan regions that share the same boundary). We say that the regions $U_i$, $i\in I$, are \textit{uniformly quasiconformally pairwise circularizable} if there exists $K\geq 1$ such that every pair $U_i,U_j$, $i\neq j$, is $K$-quasiconformally circularizable. We now state our main theorem.

\begin{theorem}\label{theorem:main}
Let $\{U_i\}_{i\in I}$ be a collection of at least two disjoint Jordan regions in $\widehat \C$ with distinct boundaries. The following are quantitatively equivalent.
\begin{enumerate}[label=\normalfont(\roman*)]
\item The regions $U_i$, $i\in I$, are uniformly quasiconformally pairwise circularizable.

\item There exists a collection of disjoint disks $\{D_i\}_{i\in I}$ in $\widehat \C$ and a quasiconformal homeomorphism $f\colon \widehat \C\to \widehat \C$ that maps the compact set $S=\widehat \C\setminus \bigcup_{i\in I} U_i$ onto the Schottky set $T=\widehat \C\setminus \bigcup_{i\in I}D_i$ and is $1$-quasiconformal on $S$. 
\end{enumerate}
In this case, the map $f|_S$ is unique up to postcomposition with a conformal automorphism of $\widehat \C$. 
\end{theorem}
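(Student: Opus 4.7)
The implication (ii) $\Rightarrow$ (i) is immediate from the definitions: a single global $K$-quasiconformal map $f$ that sends each $U_i$ to a geometric disk $D_i$ serves simultaneously as a $K$-quasiconformal circularization of every pair $\{U_i,U_j\}$, so (ii) implies (i) with constant equal to the distortion of $f$.

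For the substantive direction (i) $\Rightarrow$ (ii), my plan is to realize $f$ as a locally uniform limit of uniformizing maps built for finite subfamilies of $\{U_i\}_{i\in I}$. Since at most finitely many $U_i$ can have spherical diameter exceeding any prescribed $\varepsilon>0$, the index set $I$ is at most countable; exhaust it by an increasing sequence of finite subsets $I_1\subset I_2\subset\cdots$ with $\bigcup_n I_n=I$, and set $S_n=\widehat \C\setminus \bigcup_{i\in I_n}U_i$. The goal is to construct, for each $n$, a quasiconformal homeomorphism $f_n\colon \widehat \C\to \widehat \C$ that is conformal on $\inter(S_n)$, sends each $U_i$ with $i\in I_n$ to a geometric disk, and has distortion bounded by some $K'=K'(K)$ independent of $n$. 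For the conformal piece, one uniformizes $\inter(S_n)$ by a circle domain using Koebe's classical theorem for finitely connected planar regions (allowing tangencies on the boundary where some closed $U_i$ meet), and extends to the boundaries via Carath\'eodory. It then remains to extend this boundary correspondence into each $U_i$ as a quasiconformal map onto the corresponding disk, with uniform distortion.

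The main obstacle --- and where assumption (i) must be invoked in a genuinely new way --- is this uniform extension into each $U_i$. In Bonk's setting of uniform relative separation the bound followed from a transboundary modulus estimate, but here the peripheral disks may touch, so that route is unavailable. Instead, the uniform control should come from invoking the $K$-quasiconformal circularization of carefully chosen pairs $\{U_i,U_j\}$ in order to compare the local geometry of $\partial U_i$ with that of a genuine circle. This should yield a uniform quasisymmetry bound on the boundary correspondence $\partial U_i \to \partial D_i^{(n)}$, which in turn permits a Beurling--Ahlfors type extension with distortion depending only on $K$. I expect this step to require a careful choice of the paired $U_j$ for each portion of $\partial U_i$, together with a reflection/doubling argument across the common circular boundary of the paired image disks, so that the circularization of a pair controls not only the pair itself but the detailed geometry of $\partial U_i$ relative to the sphere.

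Once the maps $f_n$ are produced with uniform distortion $K'$, normalize each by fixing three well-separated points and extract a uniformly convergent subsequence from the resulting normal family. The limit $f$ is $K'$-quasiconformal, carries each $U_i$ to a geometric disk $D_i$, and is conformal on $\inter(S)$; since $\partial S\subset \bigcup_i \partial U_i$ is a countable union of Jordan curves and hence Lebesgue null, $f$ is automatically $1$-quasiconformal on $S$ in the measure-theoretic sense. Quantitative dependence in both implications is built into the construction. Finally, uniqueness reduces to the rigidity of Schottky sets: two maps $f_1,f_2$ satisfying (ii) produce a quasiconformal self-homeomorphism $f_2\circ f_1^{-1}$ of $\widehat \C$ that is $1$-quasiconformal on the Schottky set $T_1=f_1(S)$ and carries it onto another Schottky set $T_2$; by known rigidity results for Schottky sets, such a map must be M\"obius on $T_1$, establishing uniqueness of $f|_S$ up to postcomposition with a conformal automorphism of $\widehat \C$.
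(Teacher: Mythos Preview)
Your high-level outline (finite approximation, Koebe uniformization, uniform distortion bound, normal-family limit, Schottky rigidity) matches the paper's, but two of your steps contain genuine gaps.

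First, you cannot ``uniformize $\inter(S_n)$ by a circle domain using Koebe's classical theorem \ldots\ allowing tangencies.'' If two closures $\bar U_i,\bar U_j$ touch, then $\bar U_i\cup\bar U_j$ is a \emph{single} connected complementary component of $\inter(S_n)$, and Koebe will send it to a single disk (or point), not to two separate disks corresponding to $U_i$ and $U_j$. The paper resolves this with a nontrivial preliminary step (Proposition~\ref{prop:disjoint_closures}): given finitely many $K$-circularizable regions, one constructs an exhaustion $U_i(n)\subset U_i$ by Jordan regions with \emph{pairwise disjoint closures} that remain uniformly $H(K)$-circularizable. This is done by explicit bi-Lipschitz surgeries near each tangency point, and is essential precisely so that Koebe applies to a genuine finitely connected domain.

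Second, the heart of the argument---your ``uniform quasisymmetry bound on $\partial U_i\to\partial D_i^{(n)}$ via pair-circularization and a reflection/doubling argument''---is not just underdeveloped; the route you sketch is not the one that works. The boundary correspondence on $\partial U_i$ depends on \emph{all} the other $U_j$'s simultaneously, and there is no evident way to control it by circularizing pairs $\{U_i,U_j\}$ one at a time. The paper instead proves directly that the conformal map $f_n\colon G_n\to f_n(G_n)$ is uniformly weakly quasisymmetric (Theorem~\ref{theorem:main:finite} via Lemma~\ref{lemma:main:finite}). The mechanism is a two-sided \emph{transboundary} modulus estimate: a Loewner-type lower bound (Proposition~\ref{prop:modulus_lower}) using only uniform fatness of the $\bar U_i$, and an upper bound (Proposition~\ref{prop:modulus_upper}) that holds provided no complementary disk has large width relative to the annulus in question. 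The obstruction is that at most two image disks $K_{i_1},K_{i_2}$ can be ``too large'' relative to a given annulus; when this occurs, one uses the $K$-circularization of the \emph{specific} pair $\{U_{i_1},U_{i_2}\}$ to map those two to genuine disks, then extends $f_n$ equivariantly by the Schottky group generated by reflections in those two disks. A key geometric fact (Proposition~\ref{prop:reflect}) shows that all reflected copies of the remaining disks stay small relative to the annulus, so the upper modulus bound applies to the extended configuration. This Schottky-reflection idea is the new ingredient that replaces Bonk's uniform relative separation, and it is not captured by a Beurling--Ahlfors extension across a single $\partial U_i$.

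Finally, your uniqueness argument needs a caveat: the Bonk--Kleiner--Merenkov rigidity theorem gives M\"obius rigidity directly only for Schottky sets of \emph{measure zero}. For Schottky sets of positive area the paper supplies an additional argument, using equivariance of the induced map under the full Schottky group of $T$ together with the $1$-quasiconformality on $S$, to conclude $1$-quasiconformality on all of $\widehat\C$ and hence that the map is M\"obius by Weyl's lemma.
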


Here, a quasiconformal map $f\colon U\to V$ between open subsets of $\widehat \C$ is said to be $1$-quasiconformal on a measurable set $S\subset U$ if  $\|Df(z)\|^2=J_f(z)$ for a.e.\ $z\in S$. The statement is quantitative in the usual sense. Namely, the quasiconformal distortion bound of the uniformizing map and the constant of the quasiconformal circularization assumption depend only on each other.

We note that the assumptions of Bonk on uniform quasidisks that are uniformly relatively separated are stronger than the quasiconformal circularization assumption; see Proposition \ref{proposition:urs}. Thus, our result implies Bonk's theorem \cite{Bonk:uniformization}*{Theorem 1.1}. Also, the uniqueness in the last statement of our theorem is based on and extends results from \cite{BonkKleinerMerenkov:schottky}, regarding the rigidity of Schottky sets of area zero.

One of the novelties of the proof of existence in Theorem \ref{theorem:main} is that it relies on properties of groups generated by reflections in disjoint circles, known as Schottky groups; see Section \ref{section:schottky}. Such techniques have been used extensively in the past for the proof of significant rigidity results (e.g., see \cites{HeSchramm:Rigidity, BonkKleinerMerenkov:schottky}). It is quite remarkable that a reflection argument can provide additional leverage in existence results and we expect that the methods introduced here can result in further progress in other uniformization problems in the complex plane. Furthermore, our theorem is expected to have considerable consequences in complex dynamics, yielding uniformization and rigidity results beyond the hyperbolic setting and without any limitation on the separation of Fatou components.

We remark that a priori it is not even clear whether there exists a homeomorphism that maps the set $S$ of Theorem \ref{theorem:main} onto a Schottky set. A necessary condition is that for any $i\neq j$ the closures $\bar{U_i}$ and $\bar{U_j}$ are either disjoint or intersect at a single point; in our case, this is guaranteed by the assumption that $U_i\cup U_j$ can be mapped to the union of two disks with a homeomorphism of the sphere. A more subtle condition is that no three regions $\bar{U_i}$, $i\in I$, can meet at a point; this is also guaranteed by the quasiconformal circularization assumption, as shown in Lemma \ref{lemma:three_regions}. Yet another requirement is that for each $\varepsilon>0$ there exist at most finitely many $i\in I$ such that $\diam (U_i)>\varepsilon$; this follows from the assumption that the regions $U_i$, $i\in I$, are disjoint uniform quasidisks. Are these necessary conditions also sufficient?

\begin{problem}
Find a topological characterization of Schottky sets.
\end{problem}

We also state a consequence of Theorem \ref{theorem:main} regarding circle domains. 
\begin{corollary}
A domain $\Omega\subset \widehat \C$ can be mapped onto a circle domain by a conformal map that is the restriction of a quasiconformal homeomorphism of $\widehat\C$ if and only if the components of $\widehat \C\setminus \bar \Omega$ are Jordan regions that are uniformly quasiconformally pairwise circularizable.
\end{corollary}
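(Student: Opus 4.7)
The plan is to derive the corollary directly from Theorem \ref{theorem:main}, extracting each implication with a short argument.

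\textbf{Necessity.} Suppose a $K$-quasiconformal homeomorphism $\tilde f\colon\widehat\C\to\widehat\C$ restricts on $\Omega$ to a conformal map onto a circle domain $\Omega'$ with complementary closed disks $\{\bar D_j\}$. Then $\tilde f(\bar\Omega) = \bar{\Omega'}$, so $\tilde f$ sends $\widehat\C\setminus\bar\Omega$ homeomorphically onto $\bigcup_j D_j$. Hence the components of $\widehat\C\setminus\bar\Omega$ are precisely the $K$-quasiconformal preimages $U_j = \tilde f^{-1}(D_j)$ of the open disks $D_j$; each is therefore a Jordan region, and the single map $\tilde f$ serves as a $K$-quasiconformal circularization of every pair $(U_i,U_j)$.

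\textbf{Sufficiency.} Assume the components $\{U_i\}_{i\in I}$ of $\widehat\C\setminus\bar\Omega$ are Jordan regions, uniformly $K$-quasiconformally pairwise circularizable; one may also assume $|I|\geq 2$, since the alternative reduces to the Riemann mapping theorem. Invoking Theorem \ref{theorem:main} yields a quasiconformal homeomorphism $f\colon\widehat\C\to\widehat\C$ mapping $S = \widehat\C\setminus\bigcup_i U_i = \bar\Omega$ onto a Schottky set $T = \widehat\C\setminus\bigcup_i D_i$, with $f|_S$ being $1$-quasiconformal and with $f(U_i) = D_i$ after relabeling. Since $\Omega \subset S$, the restriction $f|_\Omega$ is a conformal homeomorphism onto the open connected set $f(\Omega)\subset T$, and this conformal map is the restriction of the global quasiconformal homeomorphism $f$.

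\textbf{The image is a circle domain.} The remaining step, which I expect to be the most delicate, is to verify that $f(\Omega)$ is a circle domain. As $f$ is a sphere homeomorphism, the components of $\widehat\C\setminus f(\Omega)$ are exactly the $f$-images of the components of $\widehat\C\setminus\Omega$. I plan to argue, using that each component of $\widehat\C\setminus\bar\Omega$ is a Jordan region together with Lemma \ref{lemma:three_regions} (which forbids triple tangencies of the $\bar U_i$), that the components of $\widehat\C\setminus\Omega$ consist of the closed Jordan regions $\bar U_i$ and possibly isolated points of $\partial\Omega$. Their $f$-images are the closed disks $\bar D_i$ and isolated points in the complement of $f(\Omega)$, respectively, exhibiting $f(\Omega)$ as a circle domain. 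The quantitative dependence between the distortion of $\tilde f$ and the circularization constant $K$ is inherited from the corresponding equivalence in Theorem \ref{theorem:main}.
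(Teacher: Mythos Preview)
Your necessity argument is fine. The sufficiency argument, however, has a genuine gap at precisely the step you flag as delicate, and this gap cannot be closed from the hypothesis as literally stated.

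The assertion that the components of $\widehat\C\setminus\Omega$ are exactly the sets $\bar U_i$ together with point components fails for two independent reasons. First, if $\bar U_i\cap\bar U_j\neq\emptyset$ (which the circularizability hypothesis permits---it only forces the intersection to be a single point), then the tangency point lies in $\partial\Omega\subset\widehat\C\setminus\Omega$, so $\bar U_i\cup\bar U_j$ is contained in a \emph{single} component of $\widehat\C\setminus\Omega$. Lemma~\ref{lemma:three_regions} rules out triple tangencies, not double ones. Concretely, for $U_1=\D$, $U_2=\{|z-2|<1\}$ and $\Omega=\widehat\C\setminus(\bar U_1\cup\bar U_2)$, the set $\widehat\C\setminus\bar\Omega=U_1\cup U_2$ has two disk components and the hypothesis holds, yet $\widehat\C\setminus\Omega$ is the union of two tangent closed disks---a single component that no sphere homeomorphism can carry to a closed disk or a point. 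Second, nothing in the hypothesis controls $\partial\Omega\setminus\bigcup_i\partial U_i$. Taking $\Omega=\widehat\C\setminus(\bar\D\cup\{|z-3|\le 1\}\cup[5,6])$ gives $|I|=2$ disjoint disk components of $\widehat\C\setminus\bar\Omega$, but the segment $[5,6]$ is a non-degenerate component of $\widehat\C\setminus\Omega$ that cannot be sent to a closed disk or a point by a quasiconformal map of $\widehat\C$. The same phenomenon (e.g.\ $\Omega=\widehat\C$ minus a tripod, so $|I|=0$) shows that your reduction of the cases $|I|\le 1$ to the Riemann mapping theorem is also incomplete.

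These examples indicate that the corollary, read verbatim, requires additional hypotheses---most naturally, that the closures $\bar U_i$ are pairwise disjoint and that every non-degenerate component of $\widehat\C\setminus\Omega$ equals some $\bar U_i$. Under those assumptions your outline goes through immediately: Theorem~\ref{theorem:main} supplies $f$, and since the components of $\widehat\C\setminus\Omega$ are then precisely the $\bar U_i$ and points, their images are the $\bar D_i$ and points, so $f(\Omega)$ is a circle domain. The paper states the corollary without proof, so there is no further argument to compare against.
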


This corollary resolves a problem studied by Herron and Koskela \cites{Herron:uniform, HerronKoskela:QEDcircledomains}, who provided sufficient conditions for the transformation of a domain onto a circle domain with a quasiconformal homeomorphism of the sphere.

\subsection{Proof outline}
We outline the proof of Theorem \ref{theorem:main}. Note that (ii) trivially implies (i), so we sketch the proof of the reverse implication.

\subsubsection*{Preliminary reductions}
The idea of the proof is to consider a finite subset $J$ of $I$ and find a quasiconformal homeomorphism of the sphere that maps $\widehat \C\setminus \bigcup_{i\in J} U_i$ onto a Schottky set. Then one wishes to show equicontinuity of such maps over all finite sets $J\subset I$ and pass to a subsequential limit in order to obtain the desired uniformizing map.

Even at the very first step we face an obstacle that is not present in other works. Koebe's uniformization theorem allows one to map a finitely connected domain conformally onto a circle domain. If the regions $U_i$, $i\in J$, had disjoint closures, then $G=\widehat \C\setminus \bigcup_{i\in J}\bar{U_i}$ would be a finitely connected domain. For instance, this is the case in the setting of Sierpi\'nski carpets. However, in our setting the closures $\bar{U_i}$, $i\in J$, can intersect. We overcome this by finding an exhaustion $U_i(n)$, $n\in \N$, of each $U_i$, $i\in J$, so that for each $n\in \N$ the regions ${U_i(n)}$, $i\in J$, have disjoint closures and most importantly they are still uniformly quasiconformally pairwise circularizable; see Proposition \ref{prop:disjoint_closures}. This step is technical and the obvious approaches do not seem to work. On the other hand, note that Theorem \ref{theorem:main}, once proved, implies the existence of such exhaustions immediately. 

In the next step, assuming that the closures $\bar{U_i}$, $i\in J$, are disjoint, we apply Koebe's uniformization theorem to find a conformal map from the domain $G=\widehat \C\setminus \bigcup_{i\in J}\bar{U_i}$ onto a circle domain. This conformal map extends to the closure $\bar G$ and we assume that $\partial U_i$ is mapped to a circle that bounds a closed disk $K_i\subset \widehat \C\setminus f(G)$ for each $i\in J$. We wish to find an extension of $f$ to a uniformly quasiconformal homeomorphism of $\widehat \C$. Then standard equicontinuity properties of uniformly quasiconformal maps will allow us to pass to the limit and prove the uniformization result.

By a consequence of the Beurling--Ahlfors extension, it suffices to show that, after suitable normalizations, the conformal map $f$ is uniformly quasisymmetric in the finitely connected domain $G$.

\subsubsection*{Proving quasisymmetric behavior via conformal modulus}
Classical results (see e.g.\ \cite{Heinonen:metric}*{Theorem 11.14}) imply that (quasi)conformal maps are quasisymmetric away from the boundary. This is not quite satisfactory in our case, since we need $f$ to be quasisymmetric on all of $G$. However, the method of proof is more valuable. 

According to that method, one needs to show that if two continua $E,F\subset G$ are relatively close to each other (meaning that the relative distance $\Delta(E,F)$ is bounded from above), then the family of curves in $G$ connecting them, denoted by $\Gamma(E,F;G)$, has large conformal modulus. On the other hand, if the images $E'=f(E)$, $F'=f(F)$ are not relatively close to each other (that is, $\Delta(E',F')$ is large), then one should prove that the conformal modulus of the family $\Gamma(E',F';f(G))$ is small. Assuming these bounds, the conformal invariance of modulus implies that if $\Delta(E,F)$ is small, then $\Delta(E',F')$ must also be small in a quantitative way. This is sufficient to show that $f$ is quasisymmetric by a classical result of V\"ais\"al\"a. 

The desired upper bound for the conformal modulus of $\Gamma(E',F';f(G))$ is immediate from the fact that the modulus of curves connecting the boundary components of a Euclidean annulus with inner radius $r$ and outer radius $R$ is $2\pi (\log(R/r))^{-1}$; this converges to $0$ as $r\to 0$. However, the modulus of curves in $G$ that connect $E$ and $F$ can be very small even when $\Delta(E,F)$ is small. This is because passages between complementary components of $G$ can be very narrow; see Figure \ref{figure:modulus_small}.

\begin{figure}
\begin{tikzpicture}

\begin{scope}
\node[regular polygon, rounded corners=5pt, draw, regular polygon sides=7, fill=black!15, minimum size=2cm] (p) at (0,0) {};
\node[regular polygon, rounded corners=3pt, draw, regular polygon sides=9, fill=black!15, minimum size=2cm] (p) at (2.1,0) {};

\draw[color=red, line width=1.5pt] (0.5,1)-- node[above] {$E$}(1.5,1);
\draw[color=red, line width=1.5pt] (0.5,-1)-- node[below] {$F$}(1.5,-1);

\draw[rounded corners, red] (0.7,1)--(1,0.5)--(0.95,0.2)--(1.05,-0.2)--(0.7,-1);
\draw[rounded corners, red,xshift=0.1cm] (0.7,1)--(1,0.5)--(0.95,0.2)--(1.05,-0.2)--(0.7,-1);
\draw[rounded corners, red, rotate=180, xshift=-2cm] (0.7,1)--(1,0.5)--(0.95,0.2)--(1.05,-0.2)--(0.7,-1);
\draw[rounded corners, red, rotate=180, xshift=-2.1cm] (0.7,1)--(1,0.5)--(0.95,0.2)--(1.05,-0.2)--(0.7,-1);
\draw[rounded corners, red] (1.1,1)to[out=260, in=80](1.02,-1);
\end{scope}

\begin{scope}[shift={(5,0)}]
\node[regular polygon, rounded corners=5pt, draw, regular polygon sides=7, fill=black!15, minimum size=2cm] (p) at (0,0) {};
\node[regular polygon, rounded corners=3pt, draw, regular polygon sides=9, fill=black!15, minimum size=2cm] (p) at (2.1,0) {};

\draw[color=red, line width=1.5pt] (0.5,1)-- node[above] {$E$}(1.5,1);
\draw[color=red, line width=1.5pt] (0.5,-1)-- node[below] {$F$}(1.5,-1);

\draw[rounded corners, red] (0.7,1)to[out=260, in=80](0.7,-1);
\draw[rounded corners, red, xshift=0.25cm] (0.7,1)to[out=260, in=80](0.7,-1);
\draw[rounded corners, red,xshift=0.1cm] (0.7,1)to[out=260, in=80](0.7,-1);
\draw[rounded corners, red, rotate=180, xshift=-2cm] (0.7,1)to[out=260, in=80](0.7,-1);
\draw[rounded corners, red, rotate=180, xshift=-2.1cm] (0.7,1)to[out=260, in=80](0.7,-1);
\draw[rounded corners, red] (1.1,1)to[out=260, in=80](1.02,-1);
\end{scope}
\end{tikzpicture}
\caption{The conformal modulus of curves connecting $E$ and $F$ is small due to a narrow passage, but the transboundary modulus is large.}\label{figure:modulus_small}
\end{figure}
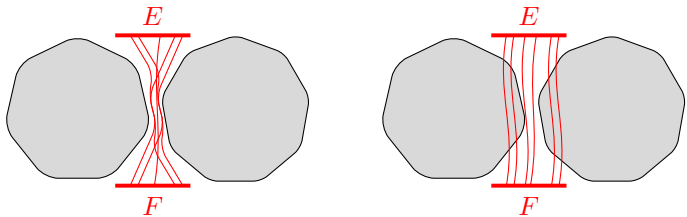

\subsubsection*{Proving quasisymmetric behavior via transboundary modulus}
In order to bound modulus away from zero, we need to switch to a different notion of modulus, the transboundary modulus that was introduced by Schramm; see Section \ref{section:transboundary} for the definition. The striking advantage of transboundary modulus is that it allows for curves to exit the domain $G$ and travel through complementary components, while maintaining conformal invariance at the same time. If the geometry of the complementary components of $G$ is nice, which is the case in our setting since each $U_i$, $i\in J$, is a uniform quasidisk, then the transboundary modulus of the family of curves in $\widehat \C$ (rather than in $G$) connecting $E$ and $F$ is large, whenever $\Delta(E,F)$ is bounded from above; see Figure \ref{figure:modulus_small}. In modern terminology, transboundary modulus has the Loewner property; see Section \ref{section:lower}. We prove this in Proposition \ref{prop:modulus_lower}. We note that Bonk obtains a similar result \cite{Bonk:uniformization}*{Prop.\ 8.1}, but he imposes the uniform relative separation assumption, which is not available here.

So, based on the method we discussed, now one needs to go back to the images $E',F'$ and ensure that whenever $\Delta(E',F')$ is large, then the transboundary modulus of the family of curves $\Gamma(E',F';\widehat \C)$ is small. Unfortunately, this is not the case in general! It was observed by Bonk that the transboundary modulus can be actually large. This is because there can be some large disks that function as bridges between two relatively small continua $E'$ and $F'$; see Figure \ref{figure:modulus_large}. Bonk observes that there can be a uniformly bounded number of disks (in fact, at most $2$) that can serve as bridges between $E'$ and $F'$ and cause transboundary modulus to be large. He resolves this issue by restricting to the subfamily of curves in $\widehat \C$ connecting $E'$ and $F'$ and avoiding a bounded number of large disks that function as bridges. 

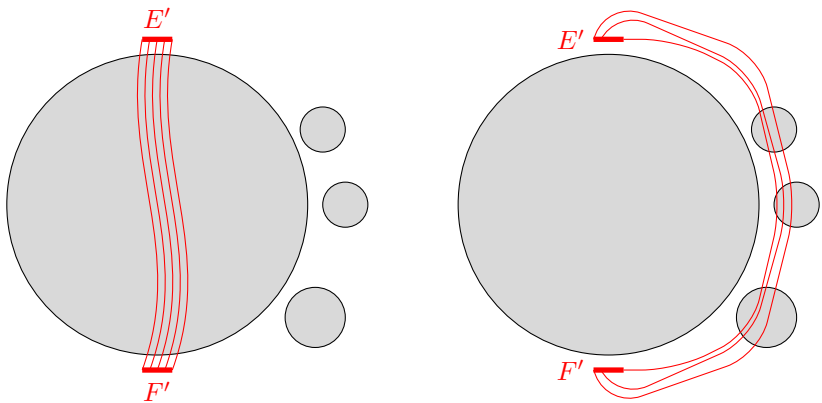
\begin{figure}

\begin{tikzpicture}[scale=1]
\begin{scope}
\draw[fill=black!15] (0,0) circle (2cm);
\draw[color=red, line width=2pt]  (-0.2,2.2)-- node[above]{$E'$}(0.2,2.2);
\draw[color=red, line width=2pt]  (-0.2,-2.2)-- node[below]{$F'$}(0.2,-2.2);

\draw[fill=black!15] (2.5,0) circle (0.3cm);
\draw[fill=black!15] (2.2,1) circle (0.3cm);
\draw[fill=black!15] (2.1,-1.5) circle (0.4cm);

\draw[color=red, rounded corners] (-0.2,2.2)to[out=260,in=70](-0.2,-2.2);
\draw[color=red, rounded corners, xshift=0.1cm] (-0.2,2.2)to[out=260,in=70](-0.2,-2.2);
\draw[color=red, rounded corners,xshift=0.2cm ] (-0.2,2.2)to[out=260,in=70](-0.2,-2.2);
\draw[color=red, rounded corners, xshift=0.3cm] (-0.2,2.2)to[out=260,in=70](-0.2,-2.2);
\draw[color=red, rounded corners, xshift=0.4cm] (-0.2,2.2)to[out=260,in=70](-0.2,-2.2);
\end{scope}

\begin{scope}[shift={(6,0)}]
\draw[fill=black!15] (0,0) circle (2cm);
\draw[color=red, line width=2pt]  (-0.2,2.2)-- node[xshift=-0.5cm]{$E'$}(0.2,2.2);
\draw[color=red, line width=2pt]  (-0.2,-2.2)-- node[xshift=-0.5cm]{$F'$}(0.2,-2.2);

\draw[fill=black!15] (2.5,0) circle (0.3cm);
\draw[fill=black!15] (2.2,1) circle (0.3cm);
\draw[fill=black!15] (2.1,-1.5) circle (0.4cm);

\draw[color=red, rounded corners=13pt] (-0.2,2.2)-- (0,2.7)--(2,2)--(2.5,0)--(2,-2)--(0,-2.7)--(-0.2,-2.2);
\draw[color=red, rounded corners=13pt,xshift=0.1cm] (-0.2,2.2)-- (0,2.6)--(1.8,1.8)--(2.3,0)--(1.8,-1.8)--(0,-2.6)--(-0.2,-2.2);
\draw[color=red, rounded corners=13pt,xshift=0.2cm]  (0,2.2)-- (0.7,2.2)--(1.7,1.7)--(2.1,0)--(1.7,-1.7)--(0.7,-2.2)--(0,-2.2);
\end{scope}
\end{tikzpicture}

\caption{The transboundary modulus of curves connecting $E'$ and $F'$ is large because of a large disk $K_i$ that functions as a bridge. However, if one avoids this one large disk, then the modulus becomes small.}\label{figure:modulus_large}
\end{figure}

It seems that we are trapped in a vicious cycle, because now the question is whether by avoiding a bounded number of complementary components of $G$ we have destroyed our favorable modulus estimates for the family of curves connecting $E$ and $F$; recall the phenomenon in Figure \ref{figure:modulus_small}. Nevertheless, ingeniously Bonk here uses again the relative separation assumption to show that this does not happen \cite{Bonk:uniformization}*{Prop.\ 7.5}. Not surprisingly, it is impossible to obtain a general inequality in the spirit of \cite{Bonk:uniformization}*{Prop.\ 7.5} without the relative separation assumption.

\subsubsection*{Proving quasisymmetric behavior without the separation assumption}
We remark that the method of Bonk shows that any conformal map is quasisymmetric as long as it maps a finitely connected domain whose complementary components are uniformly relatively separated uniform (closed) quasidisks onto a domain whose complementary components are uniform (closed) quasidisks. Thus, no particular property of geometric disks is used, other than the trivial fact that they are quasidisks. Instead, here we use the full strength of the geometry of disks, which is manifested by the existence of anti-conformal reflections along their boundaries. 
 
We show that if $\Delta(E',F')$ is relatively large, there are \textit{at most two} disks $K_{i_1},K_{i_2}$ that are large and are close to $E',F'$ that could potentially serve as bridges in the argument discussed above. To put it differently, if two disks $K_{i_1},K_{i_2}$ are so large that they intersect both boundary components of an annulus, then \textit{all other disks} that are disjoint from $K_{i_1},K_{i_2}$ must be small in a sense with respect to that annulus; see Proposition \ref{prop:annulus_euclidean_general} for a precise statement. Note that the number $2$ is important here. If one tried to prove an analogous statement for squares instead of disks, perhaps the appropriate maximum number of squares that can serve as bridges would be $4$.

Let $U_{i_1}$ and $U_{i_2}$ be the Jordan regions that correspond to $K_{i_1}$ and $K_{i_2}$ under $f^{-1}$, respectively. We now use the quasiconformal circularization assumption to map the pair $U_{i_1},U_{i_2}$ onto geometric disks $V_{i_1},V_{i_2}$ with a quasiconformal homeomorphism $h$ of the sphere. The remaining regions $U_i$, $i\neq i_1,i_2$, are transformed to uniform quasidisks $V_i=h(U_i)$. Let $g=f\circ h^{-1}$. Remember, our task is to show favorable distortion bounds for $g$, and thus for $f$. Namely, if $\Delta(h(E),h(F))$ is small, we wish to show that $\Delta(E',F')$ is also small.  

An important ingredient of the proof is that the reflections of \textit{all other disks} $K_i$, $i\neq i_1,i_2$, inside $K_{i_1}$ and $K_{i_2}$ remain \textit{favorable} and do not create new bridges between $E'$ and $F'$; see Figure \ref{figure:extension}. Informally, here we say that a disk is {favorable} if it is very small relative to the sets $E'$ and $F'$. In addition, all iterated reflections of $K_i$, $i\neq i_1,i_2$, inside $K_{i_1}$ and $K_{i_2}$ remain favorable; equivalently, one can say that the orbit of $K_i$ with respect to the Schottky group generated by reflections in the disks $K_{i_1}$ and $K_{i_2}$ consists of favorable disks that do not serve as bridges. This is an elaborate result proved in  Proposition \ref{prop:reflect}.

On the other side, the orbit of the quasidisks $V_i$, $i\neq i_1,i_2$, with respect to the Schottky group generated by reflections in the disks $V_{i_1}$ and $V_{i_2}$ consists of uniform quasidisks. Moreover, by indefinitely repeated reflections, we may extend $g$ to a quasiconformal map in a larger domain than $h(G)$; see Figure \ref{figure:extension}. We now estimate the transboundary modulus of $\Gamma(h(E),h(F);\widehat \C)$ with respect to this larger domain. By the Loewner property of transboundary modulus (Proposition \ref{prop:modulus_lower}), this modulus is large, provided that $\Delta(h(E),h(F))$ is small.

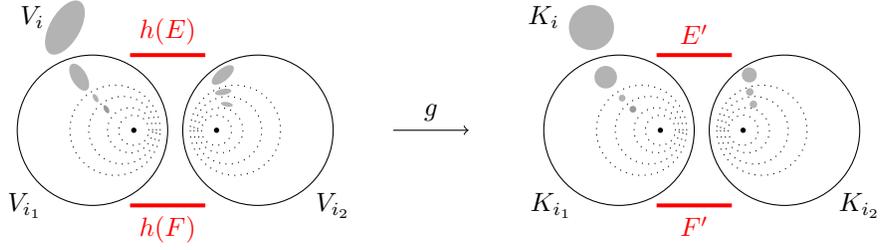
\begin{figure}
\begin{tikzpicture}
\begin{scope}
\draw(0,0) circle (1cm); \node at (-0.9,-1) {$V_{i_1}$};
\draw(2.2,0) circle (1cm); \node at (3.2,-1) {$V_{i_2}$};
\draw[color=red, line width=1.5pt] (0.5,1)-- node[above] {$h(E)$}(1.5,1);
\draw[color=red, line width=1.5pt] (0.5,-1)-- node[below] {$h(F)$}(1.5,-1);

\draw[dotted] (0.3,0) circle (0.6cm);
\draw[dotted] (0.4,0) circle (0.45cm);
\draw[dotted] (0.5,0) circle (0.3cm);
\draw[dotted] (0.55,0) circle (0.2cm);
\fill (0.55,0) circle (1pt);

\draw[dotted, rotate=180, xshift=-2.2cm] (0.3,0) circle (0.6cm);
\draw[dotted, rotate=180, xshift=-2.2cm] (0.4,0) circle (0.45cm);
\draw[dotted, rotate=180, xshift=-2.2cm] (0.5,0) circle (0.3cm);
\draw[dotted, rotate=180, xshift=-2.2cm] (0.55,0) circle (0.2cm);
\fill[rotate=180, xshift=-2.2cm] (0.55,0) circle (1pt);

\node at (-0.8,1.5) {$V_i$};
\fill[color=black!30,rotate=-30] (-1,1) ellipse (0.2cm and 0.4cm) ;
\fill[color=black!30,rotate=30] (0.2,0.7) ellipse (0.1cm and 0.2cm);
\fill[color=black!30,rotate=30] (0.25,0.35) ellipse (0.03cm and 0.06cm);
\fill[color=black!30,rotate=30] (0.3,0.15) ellipse (0.03cm and 0.06cm);

\fill[color=black!30,rotate=-50] (0.55,1.8) ellipse (0.08cm and 0.18cm) ;
\fill[color=black!30,rotate=-80] (-0.2,1.8) ellipse (0.04cm and 0.11cm) ;
\fill[color=black!30,rotate=-100] (-0.65,1.7) ellipse (0.03cm and 0.08cm) ;
\end{scope}

\begin{scope}[shift={(7,0)}]
\draw(0,0) circle (1cm); \node at (-0.9,-1) {$K_{i_1}$};
\draw(2.2,0) circle (1cm); \node at (3.2,-1) {$K_{i_2}$};
\draw[color=red, line width=1.5pt] (0.5,1)-- node[above] {$E'$}(1.5,1);
\draw[color=red, line width=1.5pt] (0.5,-1)-- node[below] {$F'$}(1.5,-1);

\draw[dotted] (0.3,0) circle (0.6cm);
\draw[dotted] (0.4,0) circle (0.45cm);
\draw[dotted] (0.5,0) circle (0.3cm);
\draw[dotted] (0.55,0) circle (0.2cm);
\fill (0.55,0) circle (1pt);

\draw[dotted, rotate=180, xshift=-2.2cm] (0.3,0) circle (0.6cm);
\draw[dotted, rotate=180, xshift=-2.2cm] (0.4,0) circle (0.45cm);
\draw[dotted, rotate=180, xshift=-2.2cm] (0.5,0) circle (0.3cm);
\draw[dotted, rotate=180, xshift=-2.2cm] (0.55,0) circle (0.2cm);
\fill[rotate=180, xshift=-2.2cm] (0.55,0) circle (1pt);

\node at (-1,1.5) {$K_i$};
\fill[color=black!30,rotate=-30] (-1,1) circle (0.3cm) ;
\fill[color=black!30,rotate=30] (0.2,0.7) circle (0.15cm);
\fill[color=black!30,rotate=30] (0.25,0.35) circle (0.045cm);
\fill[color=black!30,rotate=30] (0.3,0.15) circle (0.045cm);

\fill[color=black!30,rotate=-50] (0.55,1.8) circle (0.1cm);
\fill[color=black!30,rotate=-80] (-0.2,1.8) circle (0.05cm);
\fill[color=black!30,rotate=-100] (-0.65,1.7) circle (0.05cm);
\end{scope}

\draw[->] (4,0)-- node[above]{$g$}(5,0);
\end{tikzpicture}
\caption{Extending the map $g$ by reflections in $V_{i_1},V_{i_2}$ and $K_{i_1},K_{i_2}$. Reflections eliminate the problems caused by these four large disks.}\label{figure:extension}
\end{figure}

If we now consider the images $E'$ and $F'$ with respect to the new domain obtained by reflections, we see that all bridges between $E'$ and $F'$ have been eliminated. This is because, first, no new bridges have been created by reflections as discussed above, and, second, the orbit of the problematic disks $K_{i_1},K_{i_2}$ with respect to reflections accumulates at the limit set of the corresponding Schottky group, which consists of two points; see Figure \ref{figure:extension}. Since bridges have been eliminated, we obtain favorable upper modulus bounds for the family $\Gamma(E',F';\widehat \C)$ with respect to the new domain. Namely, if $\Delta(E',F')$ is large, then the transboundary modulus of $\Gamma(E',F';\widehat \C)$ must be small (Proposition \ref{prop:modulus_upper}). 

Finally, since the extension of $g$ is quasiconformal in the new domain, it quasi\-pre\-serves transboundary modulus, and thus the transboundary modulus of $\Gamma(E',F';\widehat \C)$ is large. Therefore, we conclude that $\Delta (E',F')$ must be small. This completes the sketch of the proof.

\subsection*{Acknowledgments}
The author would like to thank Fangming Cai, Wen-Bo Li, Alex Rodriguez, and two anonymous referees for their comments and corrections, which greatly improved the presentation. 

\section{Preliminaries}
\subsection{Notation}
If $a$ is a parameter we use the notation $C(a),L(a)$, etc.\ for positive constants that depend only on the parameter $a$. We say that a statement is \textit{quantitative} if the constants or parameters appearing in the conclusions depend only on the constants or parameters in the assumptions.

Throughout the paper we mostly use the spherical metric $\sigma$ on the Riemann sphere  $\widehat \C=\C\cup \{\infty\}$. For $z,w\in \widehat \C$ we have
$$\sigma(z,w)= \inf_{\gamma}\int_{\gamma} \frac{2|dz|}{1+|z|^2},$$
where the infimum is taken over all rectifiable curves $\gamma$ in $\widehat \C$ connecting $z$ and $w$. For the purpose of estimating the spherical metric, we will also use the chordal metric on $\widehat \C$ defined for $z\in \C$ by 
$$\chi(z,w)= \frac{2|z-w|}{\sqrt{1+|z|^2}\sqrt{1+|w|^2}}  \,\,\, \text{when $w\in \C$ and}\,\,\, \chi(z,\infty)= \frac{2}{\sqrt{1+|z|^2}}.$$
Note that $\chi\leq \sigma\leq \frac{\pi}{2}\chi$. We will also use the spherical measure on $\widehat \C$, which is denoted by $\Sigma$ and is given by 
$$\Sigma(E)= \int_E \frac{4\, dm_2(z)}{(1+|z|^2)^2}$$
for a measurable set $E\subset \C$, where $m_2$ denotes the $2$-dimensional Lebesgue measure. The Euclidean area of a measurable set $E\subset \C$ is also denoted by $m_2(E)=|E|$.

Let $(X,d)$ be a metric space. The open ball of radius $r>0$ centered at a point $x\in X$ is denoted by $B_d(x,r)$ and the closed ball is denoted by $\bar B_d(x,r)$. The diameter of a set $E\subset X$ is denoted by $\diam_d(E)$. For the Euclidean metric in $\C$ we use the subscript $e$ when necessary. For example $B_e(z,r)=\{w\in \C: |z-w|<r\}$. Most of our considerations will be on the sphere $(\widehat \C,\sigma)$ and we will drop the subscript $\sigma$ from the notation. For example, if $E\subset \widehat \C$, then $\diam(E)$ denotes the spherical diameter of $E$. Also, when $z\in \widehat \C$ and $0<r<\pi$, $B(z,r)$ is called a (geometric) disk because its boundary is a circle. A \textit{region} or a \textit{domain} is a connected open subset of $\widehat \C$.

The Hausdorff distance of two sets $E,F$ in a metric space is the infimum of all $r>0$ such that $E$ is contained in the open $r$-neighborhood of $F$ and $F$ is contained in the $r$-neighborhood of $E$.

\subsection{Quasiconformal maps}

Let $K\geq 1$. An orientation-preserving homeomorphism $f\colon U\to V$ between open subsets of $\C$ is \textit{quasiconformal} if $f$ lies in the Sobolev space $W^{1,2}_{\loc}(U)$ and
$$\|Df(z)\|^2\leq KJ_f(z)$$ 
for a.e.\ $z\in U$; here $\|\cdot \|$ denotes the operator norm of the matrix $Df$ and $J_f$ is its Jacobian determinant. In this case we say that $f$ is $K$-quasiconformal. A homeomorphism $f\colon U\to V$ between open subsets of $\widehat \C$ is quasiconformal if $f|_{U\setminus \{\infty, f^{-1}(\infty)\}}$ is quasiconformal in the above sense. We will freely use the fundamental facts that the inverse of a quasiconformal map is quasiconformal, that the composition of two quasiconformal maps is quasiconformal, and that sets of measure zero are mapped to sets of measure zero under a quasiconformal map.  
We direct the reader to \cite{LehtoVirtanen:quasiconformal} for further background.

\begin{lemma}\label{lemma:conformal_composition}
Let $f\colon U\to V$, $g\colon V\to W$ be quasiconformal homeomorphisms between open subsets of $\widehat \C$ such that $f$ is $1$-quasiconformal on a measurable set $S\subset U$ and $g$ is $1$-quasiconformal on $f(S)$. Then $f^{-1}$ is $1$-quasiconformal on $f(S)$ and $g\circ f$ is $1$-quasiconformal on $S$.
\end{lemma}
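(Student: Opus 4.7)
Proof outline. The plan is to reduce the lemma to two pointwise facts of linear algebra: for an invertible orientation-preserving $2\times 2$ real matrix $A$, the identity $\|A\|^2=\det A$ holds if and only if $A$ is a conformal linear map; and both the inverse and the product of conformal linear maps are conformal. Granting these, the argument becomes a standard a.e.\ chase resting on three ingredients from quasiconformal theory: a QC map is classically differentiable a.e.\ with invertible derivative; QC maps satisfy Lusin's condition (N), i.e., images of measure-zero sets are measure zero; and the a.e.\ chain rule $D(g\circ f)(z)=Dg(f(z))\,Df(z)$ holds for the composition of two QC maps.

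First I would select a measurable subset $S_0\subset S$ of full measure in $S$ on which $f$ is classically differentiable with $J_f>0$ and $\|Df\|^2=J_f$. For every $z\in S_0$, setting $w=f(z)$, the classical inverse function theorem applied to $Df(z)$ gives that $f^{-1}$ is differentiable at $w$ with $Df^{-1}(w)=Df(z)^{-1}$; this inverse matrix is conformal by the linear-algebra remark, so $\|Df^{-1}(w)\|^2=J_{f^{-1}}(w)$. Applying Lusin's condition (N) to the QC map $f$, the set $f(S\setminus S_0)$ has measure zero, so $\|Df^{-1}\|^2=J_{f^{-1}}$ holds a.e.\ on $f(S)$. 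This proves the first assertion.

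For the second, choose a null set $N\subset f(S)$ off of which $Dg$ exists and is a conformal linear map (such an $N$ exists by the assumption that $g$ is $1$-quasiconformal on $f(S)$). Since $f^{-1}$ is QC, Lusin's condition (N) applied to $f^{-1}$ yields $|f^{-1}(N)|=0$. On the full-measure subset $S_0\setminus f^{-1}(N)$ of $S$, the chain rule writes $D(g\circ f)(z)=Dg(f(z))\,Df(z)$ as the product of two conformal $2\times 2$ matrices, which is again conformal. Hence $\|D(g\circ f)\|^2=J_{g\circ f}$ a.e.\ on $S$, and $g\circ f$ is $1$-quasiconformal on $S$.

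The only mild technicality is to work in local coordinates on $\widehat\C$ in order to make sense of the classical derivative near the finitely many points $\infty,\ f^{-1}(\infty),\ g^{-1}(\infty)$; these form a null set that can be excluded in the a.e.\ argument. I do not anticipate any serious obstacle: the whole proof hinges on correctly invoking Lusin's condition (N) to transport null sets back and forth under $f$ and $f^{-1}$.
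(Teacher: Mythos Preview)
Your approach is essentially the same as the paper's: reduce to the linear-algebra fact that $\|A\|^2=\det A$ characterizes conformal linear matrices, then use the a.e.\ chain rule together with Lusin's condition (N) for $f$ and $f^{-1}$ to transport null sets. The paper's proof is identical in structure, citing a general chain rule for Sobolev homeomorphisms to obtain $Df^{-1}(f(z))=Df(z)^{-1}$ a.e.

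There is one technical slip worth flagging. You write that ``the classical inverse function theorem applied to $Df(z)$ gives that $f^{-1}$ is differentiable at $w$ with $Df^{-1}(w)=Df(z)^{-1}$.'' The classical inverse function theorem requires $C^1$ regularity in a neighborhood; mere pointwise differentiability of $f$ at $z$ with invertible derivative does \emph{not} imply that $f^{-1}$ is differentiable at $f(z)$. The correct argument---which you already have all the ingredients for---is to use that $f^{-1}$, being quasiconformal, is differentiable a.e.\ on $V$; intersect with $f(S_0)$ (using Lusin's (N) for $f^{-1}$ to keep full measure in $S$), and then apply the chain rule to $f^{-1}\circ f=\mathrm{id}$ at points where both derivatives exist. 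This is precisely how the paper handles it, and it is exactly what you already do in the second part of your proof for $g\circ f$. With that adjustment your argument is complete.
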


Recall that a quasiconformal map $f$ is $1$-quasiconformal on a measurable set $S$ if $\|Df(z)\|^2=J_f(z)$ for a.e.\ $z\in S$. Since $f$ is quasiconformal, we have $J_f(z)>0$ for a.e.\ $z\in S$ \cite{LehtoVirtanen:quasiconformal}*{Section IV.1.5}. Thus, $f$ is $1$-quasiconformal on $S$ if and only if  $Df(z)$ is a conformal linear matrix for a.e.\ $z\in S$. A \textit{conformal linear matrix} is a positive scalar multiple of a special orthogonal matrix. Note that the inverse of a conformal linear matrix or the product of two such matrices is also a conformal linear matrix.

\begin{proof}
Since $f$ is quasiconformal, $f^{-1}$ is quasiconformal and we have $Df^{-1}( f(z))= Df(z)^{-1}$ for a.e.\ $z\in U$, as a consequence of the chain rule; see \cite{HenclKoskela:finitedistortion}*{Theorem 5.13} for a very general statement. The assumption that $f$ is $1$-quasiconformal on $S$ implies that $Df(z)$ is a conformal linear matrix for a.e.\ $z\in S$. Thus, $Df(z)^{-1}$ has the same property. Since $f$ maps sets of measure zero to sets of measure zero, we conclude that $Df^{-1}(w)$ is a conformal linear matrix for a.e.\ $w\in f(S)$, as desired. 

For the second assertion, by the chain rule we have $D(g\circ f)(z)= Dg(f(z))\cdot Df(z)$ for a.e.\ $z\in U$. By assumption, $Dg(f(z))$ and $Df(z)$ are conformal linear matrices for a.e.\ $z\in S$; here one uses the fact that $f^{-1}$ maps sets of measure zero to sets of measure zero. Thus, $D(g\circ f)(z)$ is a conformal linear matrix for a.e.\ $z\in S$.
\end{proof}

\begin{remark}\label{remark:conformal_composition}
One can extend the definition of ($1$-)quasiconformal maps and allow them to be orientation-reversing; for example, this is the case for anti-M\"obius transformations. Then the conclusion of Lemma \ref{lemma:conformal_composition} remains valid for such maps. In particular, if $f$ is orientation-preserving as in Lemma \ref{lemma:conformal_composition} and $\phi,\psi$ are anti-M\"obius transformations, then $\psi \circ f\circ \phi^{-1}$ is $1$-quasiconformal on $\phi(S)$.
\end{remark}

\begin{lemma}\label{lemma:lsc}
Let $U,V\subset \C$ be open sets and let $K\geq 1$. Let $f_n\colon U\to V$, $n\in \N$, be a sequence of $K$-quasiconformal homeomorphisms that converge locally uniformly to a quasiconformal map $f\colon U\to V$. If $f_n$ is $1$-quasiconformal on a measurable set $S\subset U$ for each $n\in \N$, then $f$ is also $1$-quasiconformal on $S$.
\end{lemma}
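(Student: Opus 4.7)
The plan is to recast the condition of being ``$1$-quasiconformal on $S$'' as the vanishing of the $\bar\partial$-derivative on $S$, and then to deduce the conclusion from the classical weak convergence of the partial derivatives of a locally uniformly convergent sequence of $K$-quasiconformal maps.

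First I would record the algebraic identity
$$\|Df\|^2 = J_f + 4|\partial_{\bar z} f|^2,$$
valid a.e.\ for any orientation-preserving map $f \in W^{1,2}_{\loc}(U,\C)$. Combined with the fact that $J_f>0$ a.e.\ on $U$ for a quasiconformal $f$, this shows that $f$ is $1$-quasiconformal on a measurable set $S$ if and only if $\partial_{\bar z} f = 0$ a.e.\ on $S$ (equivalently, the Beltrami coefficient $\mu_f$ vanishes a.e.\ on $S$). Thus the hypothesis becomes $\partial_{\bar z} f_n = 0$ a.e.\ on $S$ for every $n$, and the goal reduces to showing $\partial_{\bar z} f = 0$ a.e.\ on $S$.

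Next, I would invoke the classical compactness theory for $K$-quasiconformal maps: since $f_n \to f$ locally uniformly and each $f_n$ is $K$-quasiconformal, the sequences $\{\partial_z f_n\}$ and $\{\partial_{\bar z} f_n\}$ are bounded in $L^2_{\loc}(U)$ --- the bound follows from the distortion inequality $\|Df_n\|^2 \leq K J_{f_n}$ together with the elementary area estimate $\int_{U'} J_{f_n}\,dA \leq |f_n(U')|$ on $U' \Subset U$ --- and they converge weakly in $L^2_{\loc}(U)$ to $\partial_z f$ and $\partial_{\bar z} f$, respectively; see for instance \cite{LehtoVirtanen:quasiconformal}. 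Identification of the weak limits is done by integration by parts against $C_c^\infty$ test functions, using the locally uniform convergence $f_n \to f$ to pass to the limit in the distributional derivative.

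Granted this, the proof concludes quickly. For any $g \in L^2(U)$ with compact support, applying weak $L^2_{\loc}$-convergence to $\chi_S\, g$ yields
$$\int_S g\, \partial_{\bar z} f \, dA \;=\; \lim_{n \to \infty} \int_S g\, \partial_{\bar z} f_n\, dA \;=\; 0,$$
since $\partial_{\bar z} f_n$ vanishes a.e.\ on $S$. Varying $g$ gives $\partial_{\bar z} f = 0$ a.e.\ on $S$, and the identity of the first step upgrades this to $\|Df\|^2 = J_f$ a.e.\ on $S$. The only real obstacle is the careful invocation of weak $L^2_{\loc}$-convergence of derivatives; this is a standard piece of quasiconformal theory but rests on the two ingredients above (uniform $L^2_{\loc}$-bound via Jacobian area estimates, and identification of the weak limit via integration by parts).
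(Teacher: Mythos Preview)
Your proof is correct and takes a more elementary route than the paper. Both arguments begin the same way: bound $\{f_n\}$ in $W^{1,2}_{\loc}(U)$ via the area estimate $\int_B \|Df_n\|^2 \le K\int_B J_{f_n} = K|f_n(B)|$, and deduce weak $W^{1,2}_{\loc}$-convergence $f_n \rightharpoonup f$. At this point the paper invokes a nontrivial lower-semicontinuity theorem of Gehring--Iwaniec, which says that a pointwise distortion bound $\|Df_n\|^2 \le M(z)\,J_{f_n}$ survives weak $W^{1,2}$-limits; applying this with $M\equiv 1$ on $S$ gives the conclusion. Your key observation is that in the special case $M\equiv 1$ the nonlinear inequality $\|Df\|^2 \le J_f$ is equivalent to the \emph{linear} condition $\partial_{\bar z} f = 0$, and linear conditions pass through weak limits trivially by pairing against $\chi_S\,g$. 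This bypasses the Gehring--Iwaniec black box entirely and is the cleaner argument for the case at hand; the paper's route, in exchange, would prove the more general statement that any measurable pointwise distortion bound $M(z)$ is inherited by the limit.

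One small correction: the displayed identity $\|Df\|^2 = J_f + 4|\partial_{\bar z} f|^2$ is not correct for the operator norm the paper uses; with $\|Df\| = |\partial_z f| + |\partial_{\bar z} f|$ and $J_f = |\partial_z f|^2 - |\partial_{\bar z} f|^2$ one gets $\|Df\|^2 - J_f = 2|\partial_{\bar z} f|\,\|Df\|$. This does not affect your argument, since the equivalence you actually need---that $\|Df\|^2 = J_f$ a.e.\ if and only if $\partial_{\bar z} f = 0$ a.e.\ (given $J_f>0$ a.e.)---follows immediately from those two formulas.
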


\begin{proof}
Let $B\subset U$ be a ball such that $\bar B\subset U$. By uniform convergence, $f_n(B)$ is contained in a fixed compact subset of $V$ for all $n\in\N$. By the change of variables formula \cite{LehtoVirtanen:quasiconformal}*{Section III.3.4} we have 
$$\int_B \|Df_n\|^2  \leq K \int_B J_{f_n}= K|f_n(B)|,$$
which is uniformly bounded in $n\in \N$. Thus, the sequence $\{f_n\}_{n\in \N}$ is uniformly bounded in $W^{1,2}(B)$. By the Banach--Alaoglu theorem, there exists a subsequence of $\{f_n\}_{n\in \N}$ that converges weakly in $W^{1,2}(B)$ to an element of $W^{1,2}(B)$. Since $\{f_n\}_{n\in \N}$ converges to $f$ strongly in $L^2(B)$, we see that $\{f_n\}_{n\in \N}$ converges to $f$ weakly in $W^{1,2}(B)$; see the discussion in \cite{Rickman:quasiregular}*{Section VI.7.8}. 

We now resort to \cite{GehringIwaniec:limit_finite_distortion}*{Theorem 6.1}, which implies that if $\|Df_n(z)\|^2 \leq M(z) J_{f_n}(z)$ for a.e.\ $z\in B$ and for some measurable function $M<\infty$,  and $\{f_n\}_{n\in \N}$ converges to $f$ weakly in $W^{1,2}(B)$, then $\|Df(z)\|^2\leq M(z) J_f(z)$ for a.e.\ $z\in B$. This gives immediately that for a.e.\ $z\in B\cap S$ we have $\|Df(z)\|^2= J_f(z)$. This is true for every ball $B\subset \bar B\subset U $, so we obtain $\|Df(z)\|^2=J_f(z)$ for a.e.\ $z\in S$.
\end{proof}

\subsection{Quasi-M\"obius and quasisymmetric maps}

Let $(X,d_X)$ be a metric space and $a,b,c,d\in X$ be distinct points. We define their (metric) cross ratio to be
$$[a,b,c,d]= \frac{d_X(a,c)d_X(b,d)}{d_X(a,d)d_X(b,c)}.$$
Observe that if $X=\widehat \C$, equipped with the chordal metric, and $a,b,c,d\neq \infty$, then 
$$[a,b,c,d]= \frac{|a-c||b-d|}{|a-d||b-c|}.$$
The factors containing the point $\infty$ are omitted. For example, 
$$[a,b,c,\infty]= \frac{|a-c|}{|b-c|}.$$

A homeomorphism $\eta\colon [0,\infty)\to [0,\infty)$ is called a \textit{distortion function}. A homeomorphism $f\colon X\to Y$ between metric spaces is \textit{quasi-M\"obius} if there exists a distortion function $\eta$ such that 
$$[f(a),f(b),f(c),f(d)] \leq \eta ([a,b,c,d])$$
for every quadruple of distinct points $a,b,c,d\in X$. In this case, we say that $f$ is $\eta$-quasi-M\"obius. Suppose that $X,Y\subset \widehat \C$ and $f\colon X\to Y$ is $\eta$-quasi-M\"obius. Then the cross ratios in $X$ and $Y$ may be computed with the chordal or the Euclidean distance interchangeably. Thus, we do not need to specify which of the two metrics is used in $X$ and $Y$. Also, since cross ratios are invariant under M\"obius transformations, the composition of $f$ with a M\"obius transformation is $\eta$-quasi-M\"obius as well.  Let $[a,b,c,d]_{\sigma}$ be the cross ratio computed with respect to the spherical metric. We have
$$\frac{4}{\pi^2}[a,b,c,d]\leq [a,b,c,d]_{\sigma} \leq \frac{\pi^2}{4}[a,b,c,d].$$
This shows that we have the freedom of switching between the Euclidean, chordal, or spherical metrics when dealing with quasi-M\"obius maps between subsets of $\widehat \C$ at the cost of changing slightly the distortion function.

A homeomorphism $f\colon X\to Y$ between metric spaces is \textit{quasisymmetric} if there exists a distortion function $\eta$ such that
$$ \frac{d_Y(f(a),f(b))}{d_Y(f(a),f(c))} \leq \eta\left(\frac{d_X(a,b)}{d_X(a,c)}\right)$$
for every triple of distinct points $a,b,c\in X$. In this case we say that $f$ is $\eta$-quasisymmetric.  

We list some fundamental properties of the above types of maps. 

\begin{enumerate}[label=\normalfont (Q-\arabic*)]
	\item\label{q:inverse} If $f$ is $\eta$-quasisymmetric (resp.\ quasi-M\"obius), then $f^{-1}$ is $\eta'$-quasi\-sym\-metric (resp.\ quasi-M\"obius) for $\eta'(t)=1/\eta^{-1}(t^{-1})$, $t>0$. Also, if $f$ is $\eta_1$-quasisymmetric (resp.\ quasi-M\"obius) and $g$ is $\eta_2$-quasisymmetric (resp.\ quasi-M\"obius), then $g\circ f$ is $\eta$-quasisymmetric (resp.\ quasi-M\"obius) for $\eta=\eta_2\circ \eta_1$.
	\item\label{q:qs_qm} Quasisymmetric maps are quasi-M\"obius, quantitatively \cite{Vaisala:quasimobius}*{Theorem 3.2}. Conversely, quasi-M\"obius maps between bounded metric spaces are quasisymmetric \cite{Vaisala:quasimobius}*{Theorem 3.12}; this statement is not quantitative. 
	\item\label{q:qm_qc} Quasi-M\"obius maps between open subsets of $\widehat \C$ are quasiconformal, quantitatively \cite{Vaisala:quasimobius}*{Theorem 5.2}. Conversely, quasiconformal homeomorphisms of the entire sphere $\widehat \C$ are quasi-M\"obius, quantitatively \cite{Vaisala:quasimobius}*{Theorem 5.4}.
	\end{enumerate}

A \textit{continuum} $E$ is a compact and connected metric space. If $E$ contains more than one point, we say that $E$ is a \textit{non-degenerate continuum}. We define the \textit{relative distance} of two disjoint non-degenerate continua $E,F$ in a metric space $(X,d)$ as
$$\Delta(E,F)=\frac{\dist_d(E,F)}{\min\{\diam_d(E),\diam_d(F)\}}.$$

\begin{lemma}\label{lemma:quasimobius:cross}
Let $f\colon (X,d_X)\to (Y,d_Y)$ be an $\eta$-quasi-M\"obius homeomorphism between metric spaces. Then there exists a distortion function $\widetilde \eta$ that depends only on $\eta$ such that for every pair of disjoint non-degenerate continua $E,F\subset X$ we have
$$\Delta(f(E),f(F)) \leq \widetilde \eta ( \Delta(E,F)).$$
\end{lemma}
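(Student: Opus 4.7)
My plan is to apply the quasi-M\"obius inequality to a carefully chosen four-point cross ratio whose value in $X$ is controlled by $\Delta(E,F)$ and whose value in $Y$ controls $\Delta(f(E),f(F))$. First I would use the compactness of $E,F$ to pick $a \in E$ and $b \in F$ with $d_X(a,b) = \dist(E,F)$, and then pick $a' \in E$ and $b' \in F$ with $d_X(a,a') \geq \diam(E)/2$ and $d_X(b,b') \geq \diam(F)/2$. Such points exist because, in any metric continuum, from any given point the supremal distance is at least half the diameter---a consequence of the triangle inequality applied to a diameter-realizing pair. A small perturbation, if needed, ensures the four chosen points are distinct.

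Setting $t = \dist(E,F)$ and $\Delta = \Delta(E,F)$, the triangle inequality gives $d_X(a,b') \leq t + d_X(b,b')$ and $d_X(b,a') \leq t + d_X(a,a')$, whence
$$[a,b,a',b']_X = \frac{d_X(a,a')\,d_X(b,b')}{d_X(a,b')\,d_X(b,a')} \geq \frac{1}{(1 + t/d_X(a,a'))(1 + t/d_X(b,b'))} \geq \frac{1}{(1 + 2\Delta)^2}.$$
Using property \ref{q:inverse}, the map $f^{-1}$ is $\eta'$-quasi-M\"obius with $\eta'(s) = 1/\eta^{-1}(1/s)$, so this forces the lower bound $[f(a),f(b),f(a'),f(b')]_Y \geq 1/\eta((1+2\Delta)^2)$. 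Coupled with the obvious upper bound $[f(a),f(b),f(a'),f(b')]_Y \leq \diam f(E)\,\diam f(F)/\dist(f(E),f(F))^2$, this yields
$$\frac{\dist(f(E),f(F))^2}{\diam f(E)\,\diam f(F)} \leq \eta\bigl((1+2\Delta)^2\bigr),$$
i.e.\ a bound on the product $\alpha'\beta'$, where $\alpha' = \dist(f(E),f(F))/\diam f(E)$ and $\beta' = \dist(f(E),f(F))/\diam f(F)$, purely in terms of $\eta$ and $\Delta$.

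The hardest step will be upgrading this control of the product $\alpha'\beta'$ to a bound on $\Delta(f(E),f(F)) = \max(\alpha',\beta')$, which requires additionally controlling the ratio of diameters $\max\{\diam f(E),\diam f(F)\}/\min\{\diam f(E),\diam f(F)\}$. My plan is to address this via a second application of the quasi-M\"obius inequality to a different four-point configuration---for instance, pairs of points realizing $\diam f(E)$ and $\diam f(F)$ in $Y$---followed by a short case analysis separating the regime where the two diameters are comparable from the regime where one dominates the other. Combining the two estimates should deliver the distortion function $\widetilde\eta$ depending only on $\eta$.
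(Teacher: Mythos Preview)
Your first estimate, yielding the product bound $\alpha'\beta'\le\eta\bigl((1+2\Delta)^2\bigr)$, is correct and clean. The gap is in what you call the hardest step. Your stated plan---bound the ratio $\max\{\diam f(E),\diam f(F)\}/\min\{\diam f(E),\diam f(F)\}$ via a second cross ratio---cannot work as phrased, because that ratio is simply not controlled by $\Delta(E,F)$ and $\eta$. Take $f$ to be the identity and let $E$ be a tiny disk tangent to a huge disk $F$: then $\Delta(E,F)=0$ while the diameter ratio is arbitrary. More subtly, if you instead choose a four-tuple in $Y$ tailored to $\Delta(f(E),f(F))$ and pull it back, the cross ratio you obtain in $X$ is bounded by $\diam E\cdot\diam F/\dist(E,F)^2=1/(\alpha\beta)$, which again is not controlled by $\Delta(E,F)=\max(\alpha,\beta)$ alone. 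So the two applications you envisage are mismatched: one controls a product in $Y$ by a max in $X$, the other would need a max in $Y$ controlled by a product in $X$, and the product in $X$ is not a function of $\Delta(E,F)$.

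The paper sidesteps this entirely by replacing the cross ratio with the modified quantity
\[
\langle x_1,x_2,x_3,x_4\rangle=\frac{\min\{d(x_1,x_3),d(x_2,x_4)\}}{\min\{d(x_1,x_4),d(x_2,x_3)\}},
\]
for which Bonk \cite{Bonk:uniformization}*{Lemmas 4.5, 4.6} proves two facts: $\langle\cdot\rangle$ is two-sidedly comparable to $[\cdot]$ via explicit distortion functions, and $\inf\{\langle x_1,x_2,x_3,x_4\rangle:x_1,x_4\in E,\ x_2,x_3\in F\}$ is two-sidedly comparable to $\Delta(E,F)$. Because $\langle\cdot\rangle$ already has a $\min$ built in, one chain of inequalities over the \emph{same} infimum gives $\widetilde\eta$ directly, with no case analysis. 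The non-obvious content is precisely the comparison $\eta_1([\cdot])\le\langle\cdot\rangle\le\eta_2([\cdot])$; this is what your proposed ``short case analysis'' would have to reproduce, and it is not short.
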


\begin{proof}
For distinct points $x_1,x_2,x_3,x_4\in X$, we define
$$\langle x_1,x_2,x_3,x_4\rangle = \frac{\min\{d_X(x_1,x_3), d_X(x_2,x_4)\}}{\min\{d_X(x_1,x_4), d_X(x_2,x_3)\}}.$$
It is shown in \cite{Bonk:uniformization}*{Lemma 4.5} that for the functions $\eta_1(t)=3^{-1}\min\{t,\sqrt{t}\}$ and $\eta_2(t)=3\max\{t,\sqrt{t}\}$ we have
\begin{align}\label{lemma:quasimobius:cross:eta12}
\eta_1([x_1,x_2,x_3,x_4])\leq \langle x_1,x_2,x_3,x_4\rangle \leq \eta_2([x_1,x_2,x_3,x_4]).
\end{align}
We define $D(E,F)=\inf\{\langle x_1,x_2,x_3,x_4\rangle: x_1,x_4\in E\,\, \text{and}\,\, x_2,x_3\in F\}$. For the sake of brevity, we write $D(E,F)=\inf \langle x_1,x_2,x_3,x_4\rangle$. It is shown in \cite{Bonk:uniformization}*{Lemma 4.6} that 
\begin{align}\label{lemma:quasimobius:cross:delta}
\Delta(E,F)\leq D(E,F)\leq 2\Delta(E,F).
\end{align}
We use the notation $x'=f(x)$. Combining \eqref{lemma:quasimobius:cross:delta} and \eqref{lemma:quasimobius:cross:eta12}, we obtain
\begin{align*}
\Delta(f(E),f(F))&\leq D(f(E),f(F)) =\inf \langle x_1',x_2',x_3',x_4'\rangle\\
&\leq \inf \eta_2([x_1',x_2',x_3',x_4'])\\
&\leq \inf \eta_2\circ \eta ([x_1,x_2,x_3,x_4])\\
&\leq \inf \eta_2\circ \eta\circ \eta_1^{-1}( \langle x_1,x_2,x_3,x_4\rangle)\\
&=\eta_2\circ \eta\circ \eta_1^{-1} (\inf \langle x_1,x_2,x_3,x_4\rangle)\\
&\leq \eta_2\circ \eta\circ \eta_1^{-1} (2\Delta(E,F)).
\end{align*}
We obtain the desired conclusion for $\widetilde \eta(t)=\eta_2\circ \eta\circ \eta_1^{-1} (2t)$. 
\end{proof}

\subsection{Quasidisks and quasicircles}
A set $U\subset \widehat \C$ is a \textit{quasidisk} if it is the image of the unit disk $\D$ (or equivalently of any other disk) under a quasiconformal homeomorphism $f\colon \widehat \C\to \widehat \C$. If $f$ is $K$-quasiconformal for some $K\geq1$, we say that $U$ is a $K$-quasidisk. Note that the complement of a quasidisk is also a quasidisk.

Let $J\subset \widehat \C$ be a Jordan curve. We say that $J$ is a \textit{quasicircle} if there exists a constant $L\geq 1$ such that for every pair of points $x,y\in J$ there exists an arc $E\subset J$ with endpoints $x,y$ such that 
\begin{align}\label{definition:quasicircle}
\diam (E)\leq L\sigma(x,y).
\end{align}
In this case we say that $J$ is an $L$-quasicircle. We note that a subset of $\C$ is a quasicircle in the above sense if and only if \eqref{definition:quasicircle} holds with the Euclidean metric and with a possibly different constant $L'$. This equivalence is quantitative and follows from \cite{Bonk:uniformization}*{Prop.\ 4.4 (iii), (iv)}. The following deep result of Ahlfors provides the link between quasidisks and
quasicircles.  

\begin{theorem}\label{theorem:quasidisk_quasicircle}
Let $U\subset \widehat \C$ be a Jordan region and let $J=\partial U$. Then $U$ is a quasidisk if and only if $J$ is a quasicircle, quantitatively.
\end{theorem}

A proof can be found in \cite{Ahlfors:reflections}, \cite{LehtoVirtanen:quasiconformal}*{Theorem II.8.6} or \cite{Pommerenke:conformal}*{Section 5}. A consequence of this result and of the fact that quasiconformal maps preserve sets of measure zero is that quasicircles have measure zero.

\begin{proposition}\label{prop:quasidisk_extension}
Let $\Omega \subset \widehat \C$ be a domain such that $\widehat \C\setminus \bar \Omega$ is the union of finitely many quasidisks with disjoint closures. Then every quasi-M\"obius embedding $f\colon \Omega\to \widehat \C$ has an extension to a quasiconformal homeomorphism of $\widehat \C$, quantitatively.
\end{proposition}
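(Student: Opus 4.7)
The plan is to extend $f$ in three stages: first continuously to $\bar\Omega$, then across each complementary quasidisk separately, and finally to verify global quasiconformality.

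First, I would show that $f$ extends continuously to a quasi-M\"obius homeomorphism $\bar f \colon \bar\Omega \to \bar f(\bar\Omega)$. Each boundary component $J_i := \partial U_i$ is a quasicircle by Proposition~\ref{prop:quasidisk_quasicircle}, so it is locally connected. Using the cross-ratio distortion of $f$ together with three fixed reference points in $\Omega$, one shows that any sequence in $\Omega$ converging to a point $p \in J_i$ is mapped to a sequence with a unique limit in $\widehat\C$, which defines $\bar f(p)$. The resulting extension is quasi-M\"obius on $\bar\Omega$ with the same distortion function (by passing to the limit in the cross-ratio inequality). Via Lemma~\ref{lemma:quasimobius:cross} and the relative-distance characterization of quasicircles, each image $\bar f(J_i)$ is a quasicircle with controlled constant, and hence each complementary component $V_i$ of $f(\Omega)$ is a quasidisk with controlled quasiconformality constant (Proposition~\ref{prop:quasidisk_quasicircle}).

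Next, for each $i$, I would extend the boundary correspondence $\bar f|_{J_i} \colon J_i \to \partial V_i$ to a quasiconformal homeomorphism $\widetilde f_i \colon \bar U_i \to \bar V_i$. Choose quasiconformal homeomorphisms $\varphi_i, \psi_i$ of $\widehat\C$ with $\varphi_i(U_i) = \psi_i(V_i) = \D$ and controlled distortion. The conjugated map $\psi_i \circ \bar f \circ \varphi_i^{-1}\bigl|_{\partial\D}$ is a quasi-M\"obius, hence quasisymmetric (by \ref{q:qs_qm}), self-homeomorphism of $\partial\D$. By the Beurling--Ahlfors extension theorem (in its disk version), it extends to a quasiconformal self-map of $\bar\D$ with controlled distortion. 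Conjugating back yields $\widetilde f_i$, with all constants depending only on $\eta$ and the quasidisk constants of $U_i$.

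Finally, define $F$ on $\widehat\C$ by $F|_{\bar\Omega} = \bar f$ and $F|_{\bar U_i} = \widetilde f_i$ for each $i$. By construction the pieces agree on the common boundaries $J_i$, so $F$ is a homeomorphism of $\widehat\C$, and it is quasiconformal on the open set $\Omega \cup \bigcup_i U_i$ with a uniform distortion bound (using \ref{q:qm_qc} on $\Omega$ and the construction on each $U_i$). The remaining set $\bigcup_i J_i$ is a finite union of quasicircles, which have area zero and are removable for quasiconformal homeomorphisms. Hence $F$ is globally $K$-quasiconformal for some $K$ depending only on the input data. The main obstacle I foresee is Step 1: confirming that a quasi-M\"obius \emph{embedding} (with no a priori information on the image) extends continuously to $\bar\Omega$ and has images that are genuine quasicircles with quantitative control. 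The argument depends crucially on the uniform local connectedness supplied by the quasicircle structure of $\partial\Omega$, which translates through the cross-ratio distortion to the uniform local connectedness of the image boundary.
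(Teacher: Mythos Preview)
Your proposal is correct and matches the alternative proof the paper sketches: the paper does not give a detailed argument but simply cites \cite{Bonk:uniformization}*{Prop.\ 5.1}, remarking that a direct proof can be given via the Beurling--Ahlfors extension and the quasiconformal removability of quasicircles---which is exactly your three-step plan. Your concern about Step~1 is legitimate but standard; the continuous extension and the quasicircle property of the image boundaries follow from the quasi-M\"obius distortion of cross ratios together with Lemma~\ref{lemma:quasimobius:cross}.
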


This follows immediately from the more general result \cite{Bonk:uniformization}*{Prop.\ 5.1}. Alternatively, a simple proof of Proposition \ref{prop:quasidisk_extension} can be given based on the Beurling--Ahlfors extension \cite{BeurlingAhlfors:extension} and on the quasiconformal removability of quasicircles. 

\section{Circularizable pairs of Jordan regions}

\subsection{Elementary properties}

Recall that a pair of Jordan regions $U,V$ in $\widehat \C$ with $\partial U\neq \partial V$ is $K$-{quasiconformally circularizable} for some $K\geq 1$ if there exists a $K$-quasiconformal homeomorphism $\phi\colon \widehat \C\to \widehat \C$ such that $\phi(U)$ and $\phi(V)$ are geometric disks. In this case, if $f$ is a $K'$-quasiconformal homeomorphism of $\widehat \C$, then $f(U)$ and $f(V)$ are $KK'$-quasiconformally circularizable.

The next proposition implies that the main assumption of Bonk \cite{Bonk:uniformization} on uniformly relatively separated uniform quasicircles is stronger than our assumption in Theorem \ref{theorem:main} on quasiconformally pairwise circularizable Jordan regions.

\begin{proposition}\label{proposition:urs}
Let $L\geq 1$, $\delta>0$ and let $U,V\subset \widehat \C$ be a pair of $L$-quasidisks with disjoint closures such that 
$\Delta(\partial U, \partial V)\geq \delta$.
Then the pair $U,V$ is $K(L,\delta)$-quasicon\-for\-mal\-ly circularizable.
\end{proposition}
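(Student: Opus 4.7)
The plan is to first normalize using the quasidisk property of $U$, and then construct the circularizing map via the Koebe uniformization of the doubly connected complement of $\overline{U}\cup\overline{V}$, extending across the two quasicircle boundaries by suitable reflections.

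First I would invoke the $L$-quasidisk assumption on $U$ to obtain a $K(L)$-quasi\-con\-formal homeomorphism $\phi_0\colon\widehat\C\to\widehat\C$ with $\phi_0(U)=\mathbb{D}$. By property \ref{q:qm_qc}, $\phi_0$ is quasi-M\"obius with distortion function depending only on $L$, and combined with Lemma \ref{lemma:quasimobius:cross} this shows that $V':=\phi_0(V)$ is an $L'(L)$-quasidisk satisfying $\Delta(\partial\mathbb{D},\partial V')\geq \delta'$ with $\delta'=\delta'(L,\delta)$. It therefore suffices to construct a quasiconformal self-homeo\-morphism of $\widehat\C$, with distortion bounded in terms of $L'$ and $\delta'$, sending both $\mathbb{D}$ and $V'$ to disks.

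Next I would apply Koebe's theorem on doubly connected domains to $\Omega:=\widehat\C\setminus(\overline{\mathbb{D}}\cup\overline{V'})$, obtaining a conformal map $\rho\colon\Omega\to A:=\{r<|w|<1\}$ with $\rho(\partial\mathbb{D})=\{|w|=1\}$ and $\rho(\partial V')=\{|w|=r\}$, extended continuously to the boundary by Carath\'eodory. Then I would define a homeomorphism $\tilde\rho\colon\widehat\C\to\widehat\C$ by setting $\tilde\rho=\rho$ on $\Omega$ and extending by quasiconformal maps $\mathbb{D}\to\{|w|>1\}\cup\{\infty\}$ and $V'\to\{|w|<r\}$ whose boundary values match $\rho$. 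Since $\partial\mathbb{D}$ is a circle and $\partial V'$ is an $L'$-quasicircle (Proposition \ref{prop:quasidisk_quasicircle}), both are quasiconformally removable, so $\tilde\rho$ is quasiconformal on $\widehat\C$, with distortion bounded by the maximum of the distortions of the two extensions; by construction $\tilde\rho(\mathbb{D})$ and $\tilde\rho(V')$ are disks. Once the quasisymmetric constants of $\rho|_{\partial\mathbb{D}}$ and $\rho|_{\partial V'}$ are controlled, a Beurling--Ahlfors-type extension (or Proposition \ref{prop:quasidisk_extension} applied to each simply connected piece) yields the required quasiconformal extensions with controlled distortion.

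For $\rho|_{\partial V'}$: a $K(L')$-quasiconformal reflection $\sigma_{V'}$ across $\partial V'$ composed with the anti-conformal inversion $w\mapsto r^2/\bar w$ across $\{|w|=r\}$ extends $\rho$ quasiconformally across $\partial V'$ to an open neighborhood, so $\rho|_{\partial V'}$ is quasisymmetric with constants depending only on $L'$ by \ref{q:qs_qm}--\ref{q:qm_qc}. For $\rho|_{\partial\mathbb{D}}$: the Schwarz reflections $z\mapsto 1/\bar z$ across $\partial\mathbb{D}$ and $w\mapsto 1/\bar w$ across $\{|w|=1\}$ extend $\rho$ \emph{conformally} to $\Omega$ united with its reflection in $\partial\mathbb{D}$. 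The hypothesis $\Delta(\partial\mathbb{D},\partial V')\geq\delta'$ is precisely what guarantees that the reflected image of $V'$ remains a definite distance inside $\mathbb{D}$ from $\partial\mathbb{D}$, so the conformal extension is defined on an annular neighborhood of $\partial\mathbb{D}$ of definite modulus, from which $\rho|_{\partial\mathbb{D}}$ is quasisymmetric with constants depending only on $L'$ and $\delta'$ by standard Koebe-type distortion estimates.

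The main obstacle is the quantitative extraction of the quasisymmetric bound for $\rho|_{\partial\mathbb{D}}$ from the modulus bounds on the Schwarz-reflected region; a clean but non-explicit alternative is a normal-families/compactness argument, since the space of configurations $(\mathbb{D},V')$ with $V'$ an $L'$-quasidisk and $\Delta\geq\delta'$ is, modulo rotations of $\mathbb{D}$, compact in the Hausdorff topology on quasicircles of bounded quasicircle constant, so the supremum of the circularization distortions is attained and hence finite.
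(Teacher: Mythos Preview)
The paper does not supply its own proof of this proposition: immediately after the statement it cites Herron \cite{Herron:uniform}*{Theorem 2.6 and Corollary 3.5} for the Euclidean version and invokes Lemma~\ref{lemma:quasimobius:cross} to pass between the Euclidean and spherical formulations. So there is nothing to compare your argument against beyond a literature reference, and any self-contained proof you give already goes further than the text.

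Your strategy---Koebe-uniformize the doubly connected complement and then extend across the two boundary quasicircles---is the natural one, and is in spirit the $\#I=2$ case of what the paper later does in Theorem~\ref{theorem:main:finite}. The outline is sound, but one step is not yet justified at the level of constants. You claim that because the quasiconformal reflection $\sigma_{V'}$ extends $\rho$ $K(L')$-quasiconformally \emph{to an open neighborhood} of $\partial V'$, the boundary map $\rho|_{\partial V'}$ is quasisymmetric with distortion depending only on $L'$. That implication is not automatic: a $K$-quasiconformal map on a thin collar around a curve need not restrict to a uniformly quasisymmetric map of the curve; one needs the collar to have definite modulus relative to the curve. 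Here the reflected domain $\sigma_{V'}(\Omega)\subset V'$ has a hole at $\sigma_{V'}(\overline{\mathbb D})$, and iterating reflections across $\partial V'$ and $\partial\mathbb D$ does not keep the dilatation bounded (each pass through $\sigma_{V'}$ costs a factor $K(L')$). The same issue appears, symmetrically, in your treatment of $\rho|_{\partial\mathbb D}$: the ``annular neighborhood of $\partial\mathbb D$ of definite modulus'' is not obviously available, since the relative-separation bound $\Delta\ge\delta'$ controls only $\dist/\min(\diam)$, so a tiny $V'$ may sit at tiny absolute distance from $\partial\mathbb D$. This is exactly where the hypothesis $\Delta\ge\delta$ must do real work, and your sketch does not yet make that work explicit.

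Your normal-families alternative is the cleanest way to close the gap, but it too needs a sharper normalization than ``modulo rotations of $\mathbb D$'': you must also pin the location and scale of $V'$ (say by a M\"obius map of $\widehat\C\setminus\overline{\mathbb D}$), and you need as input the qualitative fact that each fixed pair $(\mathbb D,V')$ is quasiconformally circularizable, which you should state and justify separately before taking the supremum.
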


When $\infty \in U$ and if the Euclidean metric is used, then the conclusion of Proposition \ref{proposition:urs} was first observed by Herron \cite{Herron:uniform}*{Theorem 2.6 and Corollary 3.5}. As a consequence of Lemma \ref{lemma:quasimobius:cross}, the assumptions of the above proposition can be equivalently stated using the Euclidean metric, in a quantitative fashion.

Observe that if $U,V$ are quasiconformally (or merely homeomorphically) circularizable Jordan regions, then $\partial U\cap \partial V$ contains at most one point. We prove another important consequence of quasiconformal circularizability.

\begin{lemma}[No room for three]\label{lemma:three_regions}
Suppose that a pair of disjoint Jordan regions $U,V\subset \widehat \C$ is quasiconformally circularizable. Let $W\subset \widehat \C\setminus (U\cup V)$ be a Jordan region such that $\partial W\cap \partial U\cap \partial V\neq \emptyset$. Then $W$ is not a quasidisk. 
\end{lemma}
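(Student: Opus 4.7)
The plan is to argue by contradiction, assuming that $W$ is a quasidisk. First I would apply the quasiconformal circularization $\phi$ of the pair $U,V$; since the quasidisk property is preserved by quasiconformal self-homeomorphisms of $\widehat \C$, the image $\phi(W)$ is also a quasidisk. For any common point $p \in \partial U \cap \partial V \cap \partial W$, the disks $\phi(U), \phi(V)$ must be externally tangent at $\phi(p)$. Post-composing $\phi$ with a M\"obius transformation sending $\phi(p)$ to $\infty$ and an affine normalization, I can assume $\phi(U) = \{\mathrm{Im}\,z > 1\}$ and $\phi(V) = \{\mathrm{Im}\,z < -1\}$. Writing $W' = \phi(W)$, this reduces the lemma to showing that a quasidisk $W' \subset \widehat \C$ with $\infty \in \partial W'$ and $W' \subset \{|\mathrm{Im}\,z| \leq 1\} \cup \{\infty\}$ cannot exist; I will seek a contradiction with the quasicircle condition for $\partial W'$ in the spherical metric, as given by Proposition \ref{prop:quasidisk_quasicircle}.

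The key topological step is to control how $\partial W'$ approaches $\infty$. I would parameterize $\partial W'$ as a Jordan loop $\gamma\colon[0,1]\to \widehat \C$ with $\gamma(0)=\gamma(1)=\infty$. Each of the two branches $\gamma|_{[0,\varepsilon)}$ and $\gamma|_{(1-\varepsilon,1]}$ lies eventually in the strip $\{|\mathrm{Im}\,z| \leq 1\}$, and since $|\gamma(t)|\to \infty$ while $|\mathrm{Im}\,\gamma(t)| \leq 1$, continuity forces $\mathrm{Re}\,\gamma(t)$ to tend either to $+\infty$ or to $-\infty$ along each branch. The main claim is that both branches must tend to the same end of the strip. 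To prove this I would conjugate by $z\mapsto 1/z$: this sends $\phi(U), \phi(V)$ to disks tangent at $0$ on opposite sides of the real axis. If the two branches of $\gamma$ approached $\infty$ in opposite directions, their images would approach $0$ from opposite sides along the real axis, so the inverted $\partial W'$ would be locally a Jordan arc through $0$ transverse to the tangency line. This arc would locally separate a neighborhood of $0$ into an upper half and a lower half, each containing part of one of the tangent disks. Since the inverted $W'$ is disjoint from both disks, it could be placed in neither local half, contradicting $0 \in \partial W'$. I anticipate that this topological rigidity step will be the most delicate point of the argument.

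Granting that both branches of $\gamma$ tend to, say, $+\infty$, the remainder is a direct spherical-metric calculation. For each large $R$ I would use the intermediate value theorem to pick $x_R, y_R \in \partial W'$ on the two distinct branches, with $\mathrm{Re}\,x_R = \mathrm{Re}\,y_R = R$. Since both points have imaginary part bounded by $1$, $|x_R - y_R| \leq 2$ and therefore $\sigma(x_R, y_R) \leq C R^{-2}$ for large $R$. The subarc of $\partial W'$ from $x_R$ to $y_R$ passing through $\infty$ has spherical diameter at least $\sigma(x_R, \infty) \geq c R^{-1}$, while the complementary subarc passes through a fixed interior point of $\gamma$ in the bounded middle portion of $\partial W'$ and hence has spherical diameter bounded below by a positive constant independent of $R$. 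For $R$ sufficiently large, neither arc satisfies an Ahlfors three-point bound relative to $\sigma(x_R, y_R)$, contradicting that $\partial W'$ is a quasicircle and completing the proof.
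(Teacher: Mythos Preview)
Your overall strategy matches the paper's proof exactly: normalize via the circularization so that $U,V$ become parallel half-planes with common boundary point $\infty$; argue that both branches of $\partial W'$ must escape to the \emph{same} horizontal infinity; then produce pairs of points $x_R,y_R$ on the two branches with equal real part $R$ and show that $\sigma(x_R,y_R)\asymp R^{-2}$ while both complementary subarcs have spherical diameter $\gtrsim R^{-1}$, violating the quasicircle condition. The reduction and the final metric computation are correct and essentially identical to the paper's.

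The one genuine weak point is your local separation argument for the ``same end'' claim. You assert that after inversion the arc through $0$ ``locally separates a neighborhood of $0$ into an upper half and a lower half, each containing part of one of the tangent disks,'' and that since the inverted $W'$ avoids both disks it ``could be placed in neither local half.'' This last inference is not valid: $W'\cap B_\varepsilon(0)$ need only be \emph{contained in} one of the two local components, not equal to it, so nothing prevents $W'$ from sitting in the thin sliver of that component between the arc and the disk boundary. Moreover, for a general Jordan curve, $\partial W'\cap B_\varepsilon(0)$ need not be a single arc, so the clean two-component picture may fail. The paper avoids this by arguing \emph{globally}: assuming the two branches go to opposite horizontal infinities, it takes a hypothetical arc $\alpha$ from $U'$ to $V'$ disjoint from $\partial W$, uses $\alpha$ together with two horizontal lines to build (via the Jordan curve theorem) two complementary Jordan regions in $\widehat\C$, observes that $E_1$ and $E_2$ lie in different ones, and concludes that the connected set $\partial W\setminus\{\infty\}$ must cross $\alpha$ --- a contradiction. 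Hence $\partial W$ separates $U'$ from $V'$, forcing $W$ to contain one of them. You should replace your local argument with this global one (or an equivalent global crossing-number argument); the rest of your proof then goes through.
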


The proof is elementary and relies on the characterization of Ahlfors from Theorem \ref{theorem:quasidisk_quasicircle}. Namely, if $U,V$ are disks tangent to each other and $W$ is a Jordan region as in the statement, then $W$ has a cusp at the point of tangency of $U$ and $V$, so it is not a quasidisk. We now include the technical details.

\begin{proof}
By assumption, there exists a quasiconformal map of $\widehat \C$ that maps $U$ and $V$ to the half-planes $U'=\{z\in \C: \im(z)< 0\}$ and $V'=\{z\in \C: \im(z)>1\}$, respectively. Let $W\subset \{z\in \C: 0<\im(z)<1\}$ be a Jordan region such that $\infty \in \partial W$. By the quasiconformal invariance of quasidisks, it suffices to show that $W$ is not a quasidisk. Let $z_0\in \partial W\setminus \{\infty\}$ be a point and consider two arcs $E_1,E_2$ of $\partial W$ connecting $z_0$ to $\infty$ that have disjoint interiors. By the connectedness of $E_1$, without loss of generality, we may assume that $\{z\in \C: \re(z)=t\}$ has non-empty intersection with $E_1$ for each $t\geq \re(z_0)$. 

If $E_2$ does not have the same property, then the connectedness of $E_2$ would imply that $\{z\in \C: \re(z)=t\}$ has non-empty intersection with $E_2$ for each $t\leq \re(z_0)$. We will show that $\partial W$ separates $U'$ and $V'$. If not, there exists $\varepsilon>0$ and a Jordan arc $\alpha$ lying in $\{z\in \C: -\varepsilon <\im(z)<1+\varepsilon\}\setminus \partial W$ except for its endpoints $z_1,z_2$, which satisfy $\im(z_1)=-\varepsilon$ and $\im(z_2)=1+\varepsilon$. By the Jordan curve theorem, the Jordan arc $\alpha$ and the lines $\im(z)=-\varepsilon$ and $\im(z)=1+\varepsilon$ define two disjoint Jordan regions whose boundaries pass through $\infty$. By the properties of $E_1$ and $E_2$, one of these Jordan regions intersects $E_1$ and the other intersects $E_2$. Thus the connected set $(E_1\cup E_2)\setminus \{\infty\}=\partial W\setminus \{\infty\}$ must intersect the common boundary $\alpha$ of these Jordan regions. This is a contradiction. Therefore, as claimed, $\partial W$ separates $U'$ and $V'$. The Jordan curve theorem implies that one of $U'$ and $V'$ is a subset of the Jordan region $W$, a contradiction. This contradiction implies that $\{z\in \C: \re(z)=t\}$ has non-empty intersection with both $E_1$ and $E_2$ for each $t\geq \re(z_0)$.

Let $z_1,z_2\in \partial W$ be distinct points with $t=\re(z_1)=\re(z_2)> \re(z_0)$. Then 
$$\sigma(z_1,z_2)\leq \frac{\pi}{2} \chi(z_1,z_2) \leq  \frac{\pi}{1+t^2}.$$
Let $E$ be the arc of $\partial W\setminus \{z_1,z_2\}$ that contains $\infty$. Then 
$$\diam (E) \geq \sigma(z_1,\infty) \geq \chi(z_1,\infty)=\frac{2}{\sqrt{1+|z_1|^2}} \geq \frac{2}{\sqrt{1+(|t|+1)^2}}.$$
Now, if $F$ is the complementary arc of $E$, then 
$$\diam(F) \geq \sigma(z_1,z_0) \geq \chi (z_1,z_0) \geq \frac{2(t-\re(z_0))}{\sqrt{1+(|t|+1)^2} \sqrt{1+|z_0|^2}}.$$
As $t\to \infty$, we see that the ratio $\min\{\diam(E),\diam(F)\}/ \sigma(z_1,z_2)$ tends to $\infty$, so $\partial W$ is not a quasicircle. By Theorem \ref{theorem:quasidisk_quasicircle}, $W$ is not a quasidisk. 
\end{proof}

\subsection{Pulling away Jordan regions}
We prove a statement that allows us to reduce Theorem \ref{theorem:main} to the case of Jordan regions with disjoint closures.

\begin{proposition}[Exhaustion]\label{prop:disjoint_closures}
Let $\{U_i\}_{i\in I}$ be a finite collection of disjoint Jordan regions in $\widehat \C$ that are $K$-quasiconformally pairwise circularizable for some $K\geq 1$. Then for each $n\in \N$ there exists a collection $\{U_{i}(n)\}_{i\in I}$ of Jordan regions 
with the following properties.
\begin{enumerate}[label=\normalfont(\roman*)]
\item\label{prop:disjoint_closures:1} $U_i(n)\subset  U_i(n+1)\subset U_i$ and $\bigcup_{n\in \N} U_i(n)= U_i$ for each $i\in I$. 
\item\label{prop:disjoint_closures:3} The regions $\{U_{i}(n)\}_{i\in I}$ are $H(K)$-quasiconformally pairwise circularizable.
\item\label{prop:disjoint_closures:4} The closures $\{\bar{U_{i}(n)}\}_{i\in I}$ are disjoint.
\end{enumerate}
\end{proposition}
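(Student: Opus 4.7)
The plan is to construct $U_i(n)$ by locally \emph{pinching} each $U_i$ at every tangency point with another region in the family, using the pair circularization $\phi_{ij}$ to perform the pinch in coordinates in which the tangent pair looks like two tangent open disks. By the circularizability hypothesis, each closure intersection $\bar U_i\cap \bar U_j$ consists of at most one tangency point $p_{ij}$; moreover, by Lemma~\ref{lemma:three_regions} no three closures meet at a common point, since each $U_\ell$ is a $K$-quasidisk (being the qc image of a disk via any $\phi_{\ell m}^{-1}$), which the lemma forbids from appearing in a triple intersection. Consequently, the tangency points on $\partial U_i$ arising from different neighbors $j$ are distinct, and we may fix pairwise disjoint open neighborhoods $W_{ij}\subset\widehat\C$ of the $p_{ij}$.

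For each tangent pair $(i,j)$, fix a $K$-qc map $\phi_{ij}\colon \widehat\C\to\widehat\C$ with $\phi_{ij}(U_i)=D_i$ and $\phi_{ij}(U_j)=D_j$ a pair of open disks tangent at $q_{ij}=\phi_{ij}(p_{ij})$. For a sequence $\varepsilon_n\downarrow 0$, define the closed cap $\bar C_{ij}^{(n)}:=\phi_{ij}^{-1}(\bar B(q_{ij},\varepsilon_n)\cap \bar D_i)\cap \bar W_{ij}\subset\bar U_i$, and set $U_i(n):=U_i\setminus \bigcup_{j\in N(i)} \bar C_{ij}^{(n)}$, where $N(i)$ denotes the set of neighbors of $i$ in the tangency graph. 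For $\varepsilon_n$ sufficiently small, each cap is a closed topological disk containing an open neighborhood of $p_{ij}$ in $\bar U_i$, and the caps are pairwise disjoint because the $W_{ij}$ are. Hence $U_i(n)$ is a Jordan region, the caps shrink to the tangency points as $n\to\infty$ (giving property~(i)), and (iv) holds because each cap contains an open neighborhood of $p_{ij}$ in $\bar U_i$, separating $\bar{U_i(n)}$ from $\bar{U_j(n)}$ near $p_{ij}$.

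The main difficulty is the uniform circularization bound (iii). Fix a pair $(i,j)$; applying $\phi_{ij}$ reduces the pinched pair to $(D_i\setminus A_i,\, D_j\setminus A_j)$, where $A_i\subset D_i$ is a union of small ``bites'', one at each $\phi_{ij}(p_{ik})$ for $k\in N(i)$. The bite at $q_{ij}$ (when $j\in N(i)$) is a Euclidean lens $\bar B(q_{ij},\varepsilon_n)\cap \bar D_i$, while the other bites are $K$-qc images of Euclidean lenses under the compositions $\phi_{ij}\circ \phi_{ik}^{-1}$ and hence are uniform quasidisks with constant depending only on $K$. Construct the desired circularization in two steps. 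First, \emph{heal} each interior bite (one not at $q_{ij}$) by a local qc modification supported near it, with distortion depending only on $K$; this reduces the pair to $(D_i,D_j)$ in the non-tangent case, and to $(D_i\setminus \bar B(q_{ij},\varepsilon_n),\,D_j\setminus \bar B(q_{ij},\varepsilon_n))$ in the tangent case. Second, in the tangent case, the Euclidean ball bites meet $\partial D_i,\partial D_j$ at right angles, so each piece is a quasidisk of uniform constant; a symmetric Möbius rescaling by $1/\varepsilon_n$ centered at $q_{ij}$ brings the local picture to a universal model configuration (two parallel half-planes with a common unit-ball bite at the origin), which can be circularized once-and-for-all by an explicit qc map whose distortion is absolute.

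The main obstacle is securing the uniform-in-$n$ distortion bound in this last step. The obvious candidate of a Riemann-map exhaustion $U_i(n)=\psi_i(B(0,1-1/n))$ fails precisely here: under $\phi_{ij}$ the image becomes a $K$-quasidisk whose relative distance to $\partial D_j$ tends to zero as $n\to\infty$, and Proposition~\ref{proposition:urs} then yields a circularization constant blowing up with $1/\Delta$. The Euclidean-ball pinch is carefully arranged so that, after the Möbius rescaling, the rescaled model configuration is independent of $\varepsilon_n$; thus the distortion of the final circularization is controlled by a uniform constant depending only on $K$. The one delicate point is verifying that the healing modifications in the first step can be carried out with distortion depending on $K$ alone, even though the bites at other tangency points are $K$-qc images of lenses rather than lenses themselves; this follows because the compositions $\phi_{ij}\circ \phi_{ik}^{-1}$ are $K^2$-quasi-M\"obius on the sphere (by the quasisymmetry of qc maps of $\widehat\C$), so the distorted bites have quasidisk constant and corner angles controlled by $K$.
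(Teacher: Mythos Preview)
Your overall strategy matches the paper's: localize at each tangency $p_{ij}$, remove a small bite from each $U_i$ there, verify that the resulting regions have disjoint closures and exhaust, and then show uniform pairwise circularizability by first ``healing'' the irrelevant bites and then handling the remaining tangent pair. The healing step is essentially what the paper does as well, via conjugation by the circularizing maps; your observation that $\phi_{ij}\circ\phi_{ik}^{-1}$ is $K^2$-quasi-M\"obius is the right mechanism, though you do not actually exhibit the local healing map with uniform distortion (the paper does this with an explicit bi-Lipschitz lemma, Lemma~\ref{lemma:push}).

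The genuine gap is in your treatment of the tangent case. Your claim that ``a symmetric M\"obius rescaling by $1/\varepsilon_n$ centered at $q_{ij}$ brings the local picture to a universal model configuration'' is false. Write $q_{ij}=0$ and $D_i=B(ir_i,r_i)$, $D_j=B(-ir_j,r_j)$. Under $z\mapsto z/\varepsilon_n$ the bitten regions become $B(ir_i/\varepsilon_n,\,r_i/\varepsilon_n)\setminus \bar B(0,1)$ and its partner: the disks are still tangent at $0$, with radii $r_i/\varepsilon_n\to\infty$. Under $z\mapsto \varepsilon_n/z$ they become the lenses $\{\im w<-\varepsilon_n/(2r_i)\}\cap\mathbb D$ and $\{\im w>\varepsilon_n/(2r_j)\}\cap\mathbb D$. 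In either description the configuration genuinely depends on $\varepsilon_n$; the half-planes you write down as the ``universal model'' arise only in the limit $\varepsilon_n\to0$, and that limit pair $\{\im>0\}\setminus\bar B(0,1)$, $\{\im<0\}\setminus\bar B(0,1)$ is not even admissible --- their closures share the two rays $\{|x|\ge1,\,y=0\}$, so no homeomorphism of $\widehat\C$ can send them to a pair of disks. Thus your ``once-and-for-all'' circularization does not exist, and the argument does not yield a bound independent of $n$.

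The paper sidesteps exactly this difficulty by cutting with \emph{chords} parallel to the common tangent rather than with ball caps, and by proving two explicit bi-Lipschitz lemmas with universal constants: one (Lemma~\ref{lemma:push}) pushes a chord onto a circular arc to perform the healing, and the crucial one (Lemma~\ref{lemma:pull}) detaches a chord-truncated disk from the tangent half-plane --- i.e., maps the region $T$ bounded by a chord and the far arc of $\partial D_i$ onto a round disk by a bi-Lipschitz map that is the identity on the half-plane containing $D_j$, with constant independent of the chord position. Applying Lemma~\ref{lemma:pull} once to each side produces two genuine disks, so no further circularization is needed and no degenerating model enters. Your ball-cap cut could in principle be salvaged by first straightening the orthogonal arc of $\partial B(q_{ij},\varepsilon_n)$ to a chord (a uniform bi-Lipschitz move, since that arc has fixed aspect ratio) and then invoking Lemma~\ref{lemma:pull}; but that is not the argument you gave.
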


Observe that Theorem \ref{theorem:main}, once proved, implies a stronger version of the above proposition for countably many (rather than finitely many) Jordan regions $U_i$, $i\in I$. 

For the proof we need two elementary lemmas that provide some useful quasiconformal transformations of the plane. We thank an anonymous referee for suggesting alternative proofs of these lemmas, which substantially simplify the original arguments.

\begin{lemma}[Quasiconformal pushing of a chord onto a circular arc]\label{lemma:qcpush}
Let $\theta\in (0,\pi/4)$ and consider the disk $D=\{z\in \C: |z-\sec\theta|< \tan\theta\}$ whose boundary is orthogonal to the unit circle at the points $e^{-i\theta}$, $e^{i\theta}$. There exists a $2$-quasi\-con\-formal map $f$ of $\C$ that is the identity map in $\C\setminus D$ and maps the chord $C=[e^{-i\theta},e^{i\theta}]$ of the unit circle onto the minor arc of the unit circle subtended by $C$.

In particular, if $T$ is the component of $\D\setminus [e^{-i\theta},e^{i\theta}]$ that contains the origin $0$, then $f$ maps $T$ onto $\D$. 
\end{lemma}

In the proof we use the elementary fact that the map $g(re^{it})=re^{i\alpha t}$, which stretches or contracts angles by a factor $\alpha>0$, is $K$-quasiconformal (in an appropriate region) for $K=\max\{\alpha,1/\alpha\}$.

\begin{figure}
\begin{tikzpicture}[scale=2]
 	\begin{scope}
	\coordinate (a) at (-30:1);
	\coordinate (b) at (30:1);	
	\draw[dotted] (0,0) circle (1cm);
	\node at (-0.7,0) {$\D$};
 	\fill (0,0) circle (0.7pt) node[above]{$0$};
	\draw[red,line width=1.5pt] (a)--(b);
	\fill (a) circle (0.7pt);
	\fill (b) circle (0.7pt)node[above] {$e^{i\theta}$};
	\draw (1.15,0) circle (0.577cm);
	\draw[dotted](0,0)--(1.15,0);
	\draw[dotted](0,0)--(a);
	\draw ([shift=(0:0.2)]0,0) arc (0:-30:0.2) node[shift={(0.2,0.05)}]{$\scriptstyle \theta$};
	\fill (1.15,0) circle (0.7pt) node[above] {$\scriptstyle \sec\theta$};
	\draw node at (1.5,0) {$D$};
	\draw[dotted] (1.15,0)--(a);
	\draw ([shift=(60:0.2)]a) arc (60:90:0.2) node[shift={(0.12,0.13)}]{$\scriptstyle \theta$};
	\end{scope}
	
	\begin{scope}[shift={(4,0)}]
	\draw[dotted] (-1,0)--(1,0);
	\draw (0,-1)--(0,1)node[right] {$M(\partial D)$};
	\draw[red,line width =1.5pt] (0,0)--(1,0.575) node[above]{$\color{black}M(C)$};
	\draw ([shift=(0:0.2)]0,0) arc (0:30:0.2) node[shift={(0.2,-0.05)}]{$\scriptstyle \theta$};
	\fill (0,0) circle (0.7pt) node[anchor=north east]{$0$};
	\end{scope}
	
	\begin{scope}[shift={(4,-3)}]
	\draw[dotted] (-1,0)--(1,0);
	\draw (0,-1)--(0,1)node[right] {$M(\partial D)$};
	\draw[red,line width=1.5pt] (0,0)--(1,0);
	\fill (0,0) circle (0.7pt) node[anchor=north east]{$0$};
	\end{scope}
	
	\begin{scope}[shift={(0,-3)}]
	\coordinate (a) at (-30:1);
	\coordinate (b) at (30:1);	
	\draw[dotted] (0,0) circle (1cm);
 	\fill (0,0) circle (0.7pt) node[above]{$0$};
	\draw (1.15,0) circle (0.577cm);
	\draw[red, line width=1.5pt] ([shift=(-30:1)]0,0) arc (-30:30:1);
	\fill (a) circle (0.7pt);
	\fill (b) circle (0.7pt)node[above] {$e^{i\theta}$};
	\end{scope}
	
	\draw[->] (2,0)--(2.5,0) node[pos=.5,above]{$M$};
	\draw[<-] (2,-3)--(2.5,-3) node[pos=.5,above]{$M^{-1}$};
	\draw[->] (4,-1.25)--(4,-1.75) node[pos=.5,right]{$g$};
	\draw[->] (0,-1.25)--(0,-1.75) node[pos=.5,right]{$f$};
\end{tikzpicture}
	\caption{The construction of the map $f$ in the proof of Lemma \ref{lemma:qcpush}.}\label{figure:qcpush}
\end{figure}

\begin{proof}
Consider the M\"obius transformation $M(z)= \frac{1-e^{i\theta}z}{z-e^{i\theta}}$, which maps the unit disk onto the upper half-plane so that $M(e^{-i\theta})=0$, $M(1)=1$, and $M(e^{i\theta})=\infty$. The disk $D$ is mapped onto the right half-plane and the chord $C$ is mapped onto the ray $\{re^{i\theta}: r\geq 0\}\cup \{\infty\}$; this can be derived from the fact that the angle between the chord $C$ and the unit circle is $\theta$. See Figure \ref{figure:qcpush} for an illustration. We define a homeomorphism $g$ of $\C$ that is equal to the identity map in the left half-plane, while it stretches and contracts angles appropriately in the right half-plane so that $M(C)$ is mapped onto the non-negative real axis. Specifically,
\begin{align*}
g(re^{it})=
\begin{cases}
r \exp\big(i \frac{\pi}{2} \frac{t-\theta}{\pi/2-\theta}\big),& \theta\leq t\leq \frac{\pi}{2}\\
r \exp\big(i \frac{\pi}{2} \frac{t-\theta}{\pi/2+\theta}\big),& -\frac{\pi}{2}\leq t\leq \theta
\end{cases}. 
\end{align*}
The map $g$ is $L_1$-quasiconformal for $L_1=\frac{\pi/2}{\pi/2-\theta}$ when $\theta<t<\frac{\pi}{2}$ and $L_2$-quasiconformal for $L_2=\frac{\pi/2+\theta}{\pi/2}$ when $-\frac{\pi}{2}<t<\theta$. Also, $g$ is trivially $1$-quasi\-conformal in the left half-plane. Note that $L_2<L_1\leq 2$ for $\theta\in (0,\pi/4)$. The quasiconformal removability of lines (see \cite{LehtoVirtanen:quasiconformal}*{Theorem I.8.3}) implies that $g$ is $2$-quasiconformal in $\C$. The composition $f=M^{-1}\circ g\circ M$ is $2$-quasiconformal and has the desired behavior.
\end{proof}

\begin{lemma}[Quasiconformal detachment of a disk from a half-plane]\label{lemma:qcpull}
Let $\theta\in (0,\pi/2)$ and $T$ be the component of $\D\setminus [e^{-i\theta},e^{i\theta}]$ that contains the origin $0$.  There exists a $2$-quasiconformal map of $\C$ that is the identity map in the half-plane $\{z\in \C: \re(z)\geq 1\}$ and maps $T$ onto a disk. 
\end{lemma}

\begin{figure}
\begin{tikzpicture}[scale=2]
 	\begin{scope}
	\coordinate (a) at (-30:1);
	\coordinate (b) at (30:1);
	
		\begin{scope}
		\clip (-1,-1) rectangle (0.866,1);
		\fill[yellow!60] (0,0) circle (1cm);
		\end{scope}
		
	\draw[dotted] (0,0) circle (1cm);
	\node at (-0.7,0) {$T$};
 	\fill (0,0) circle (0.7pt) node[above]{$0$};
	\draw[red,line width=1.5pt] (a)--(b);
	\draw[red,line width=1pt,dashed] (0.866,-1.2)--(0.866,1.2);
	\fill (a) circle (0.7pt);
	\fill (b) circle (0.7pt)node[left] {$e^{i\theta}$};
	\draw ([shift=(63:0.2)]a) arc (63:90:0.2) node[shift={(0.12,0.13)}]{$\scriptstyle \theta$};
	\fill[black!20] (1,-1.2)rectangle(1.5,1.2);
	\draw (1,-1.2)--(1,1.2);
	\fill (1,0) circle (.7pt) node[right]{$1$};
	\node at (1.25,-0.5) {$\Omega$};
	\end{scope}
	
	\begin{scope}[shift={(4,0)}]
	\fill[yellow!60] (-1,0)--(0,0)--(1,1.2)--(-1,1.2);
	\node at (-0.5,0.5) {$M(T)$};
	\draw[dotted] (-1,0)--(1,0);
	\draw[red,line width =1.5pt] (0,0)--(1,1.2);
	\draw[red, line width=1pt, dashed] (0,0)--(-1,-1.2);
	\draw ([shift=(0:0.2)]0,0) arc (0:50:0.2) node[shift={(0.2,-0.05)}]{$\scriptstyle \theta$};
	\fill (0,0) circle (.7pt) node[above]{$0$};
	\fill[black!20] (0.4,-0.866) circle (0.866cm);
	\draw (0.4,-0.866) circle (0.866cm) node {$M(\Omega)$};
	\fill (0.4,0) circle (.7pt) node[below]{$1$};
	\end{scope}
	
	\begin{scope}[shift={(4,-3)}]
	\fill[yellow!60] (-0.9,-0.422)--(0.9,0.422)--(-0.9,0.422);
	\draw[dotted] (0,0)--(1,0);
	\draw[dotted] (0,0)--(-0.9,-0.422);
	\draw[red,line width =1.5pt] (0,0)--(0.9,0.422);
	\draw[red, line width=1pt, dashed] (0,0)--(-1,-1.2);
	\draw ([shift=(0:0.2)]0,0) arc (0:25:0.2);
	\fill (0,0) circle (.7pt) node[above]{$0$};
	\fill[black!20] (0.4,-0.866) circle (0.866cm);
	\draw (0.4,-0.866) circle (0.866cm);
	\fill (0.4,0) circle (.7pt) node[below]{$1$};
	\end{scope}
	
	\begin{scope}[shift={(0,-3)}]
	\coordinate (a) at (-30:1);
	\coordinate (b) at (30:1);	
	\draw[dotted] ([shift=(-30:1)]0,0) arc (-30:30:1);
 	\fill (0,0) circle (.7pt) node[above]{$0$};
	
		\begin{scope}
		\clip (-1,-2) rectangle (1,2);
		\fill[yellow!60] (b) arc (20:380:1.5);
		\draw[dotted] (b) arc (20:340:1.5);
		\end{scope}
	
	\draw[red,line width=.7pt] (a) arc (-20:20:1.5);	
	\draw[red,line width=1pt,dashed] (0.866,-1.2)-- (a); 
	\draw[red,line width=1pt,dashed](0.866,1.2)--(b);
	\fill (a) circle (.7pt);
	\fill (b) circle (.7pt)node[left] {$e^{i\theta}$};
	\fill[black!20] (1,-1.2)rectangle(1.5,1.2);
	\draw (1,-1.2)--(1,1.2);
	\fill (1,0) circle (0.7pt) node[right]{$1$};
	\end{scope}
	
	\draw[->] (2,0)--(2.5,0) node[pos=.5,above]{$M$};
	\draw[<-] (2,-3.25)--(2.5,-3.25) node[pos=.5,above]{$M^{-1}$};
	\draw[->] (4,-1.9)--(4,-2.25) node[pos=.5,right]{$g$};
	\draw[->] (0,-1.1)--(0,-1.5) node[pos=.5,right]{$f$};
\end{tikzpicture}
	\caption{The construction of the map $f$ in the proof of Lemma \ref{lemma:qcpull}.}\label{figure:qcpull}
\end{figure}

\begin{proof}
As in the proof of Lemma \ref{lemma:qcpush}, consider the M\"obius transformation $M$ that maps the unit disk onto the upper half-plane and satisfies $M(e^{-i\theta})=0$, $M(1)=1$, and $M(e^{i\theta})=\infty$.  Note that the image of the half-plane $\Omega=\{z\in \C:\re(z)>1\}$ under $M$ is a disk in the lower half-plane that is tangent to the real line at the point $1$ and to the line $\{re^{i\theta}: r\in \R\}$. Thus, $M(\Omega)$ is contained in the region $\{re^{it}:r>0,\,\, \pi+\theta < t < 2\pi\}$. Also, observe that $M(T)=\{re^{it}: r>0,\,\, \theta<t<\pi\}$;  see Figure \ref{figure:qcpull}. We will define a homeomorphism $g$ of $\C$ that is the identity map in $M(\Omega)$ and maps $M(T)$ onto a half-plane.

For $r\geq 0$ and $\pi+\theta\leq t \leq 2\pi$ we set $g(re^{i\theta})=re^{i\theta}$, so $g$ is the identity map in $M(\Omega)$. For $0\leq t\leq \theta$ we set $g(re^{it})= re^{it/2}$ and symmetrically for $\pi\leq t\leq \pi+\theta$ we set $g(re^{it})=re^{i(t+\pi+\theta)/2}$. So in these two sectors the map $g$ contracts angles by the factor $2$. Finally, for $\theta\leq t\leq \pi$ we define $g$ by 
$$g(re^{it})=r \exp\left(i \bigg(\frac{\pi}{\pi-\theta} (t-\theta)+\frac{\theta}{2}\bigg)\right),$$
which stretches angles by the factor $\frac{\pi}{\pi-\theta}$, a quantity bounded above by $2$ for $\theta\in (0,\pi/2)$. By definition, $g$ maps $M(T)$ onto the half-plane $\{re^{it}:r>0,\,\, \theta/2<t<\pi+\theta/2\}$, as shown in Figure \ref{figure:qcpull}. Also, note that $g$ is $2$-quasiconformal by the quasiconformal removability of lines. The map $f=M^{-1}\circ g\circ M$ has the desired behavior.
\end{proof}

\begin{proof}[Proof of Proposition \ref{prop:disjoint_closures}]
Let $\{U_i\}_{i\in I}$ be a finite collection of disjoint Jordan regions that are $K$-quasiconformally pairwise circularizable. In particular, each $U_i$, $i\in I$, is a $K$-quasidisk. By Lemma \ref{lemma:three_regions}, no point of $\widehat \C$ can lie on the boundary of three regions $U_i$, $i\in I$. Also, the boundaries of any pair of regions have at most one point in common. For $i,j\in I$, $i\neq j$, let $f_{ij}$ be a $K$-quasiconformal homeomorphism of $\widehat \C $ that maps $U_i$ and $U_j$ to disks $D_{ij}$ and $D_{ji}$ in $\C$, respectively. We assume that $f_{ij}=f_{ji}$ for $i\neq j$. 

Since $I$ is a finite index set and any two sets $\partial U_i$ and $\partial U_j$, $i\neq j$, have at most one point in common, we see that the set $\bigcup_{i\neq j}(\partial U_i\cap \partial U_j)$ is finite. For each $p\in \partial U_i\cap \partial U_j$, $i\neq j$, there exists an arbitrarily small neighborhood $V(p)$ of $p$ such that the sets $V(p)\cap \partial U_i$ and $V(p)\cap \partial U_j$ are arcs. Moreover, by choosing small enough neighborhoods we may have that $\bar {V(p)}\cap \bar{V(q)}=\emptyset$ whenever $p\neq q$. We finally require that $\bar{V(p)}$ is disjoint from $\bar{U_k}$ if $p\notin \partial U_k$. Therefore, if $p\in \partial U_i\cap \partial U_j$, then $\bar{V(p)}$ intersects only $U_i$ and $U_j$ and no other region $\bar{U_k}$.

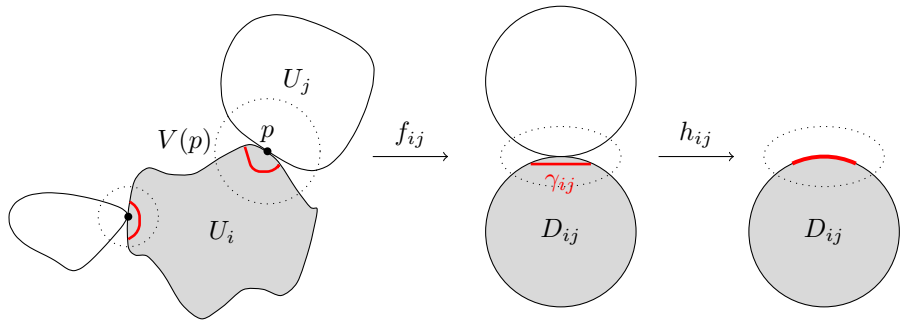
\begin{figure}

\begin{tikzpicture}

\begin{scope}           		
\def\a{1.25};\def\A{1};
\def\b{0.1};\def\B{2};
\def\c{1};\def\C{1};
\def\d{0.2};\def\D{5};
\draw [fill=black!15,smooth,domain=0:360] plot 
            		(
            		{\a*cos( \A*\x )+\b*sin(\B*\x)}, 
            		{\c*sin( \C*\x ) + \d*cos(\D*\x )}
            		) ;
            		
\def\a{1};\def\A{1};
\def\b{0.1};\def\B{2};
\def\c{1};\def\C{1};
\def\d{0.1};\def\D{3};
\draw [smooth,domain=0:360, shift={(1,1.85)}] plot 
            		(
            		{\a*cos( \A*\x )+\b*sin(\B*\x)}, 
            		{\c*sin( \C*\x ) + \d*cos(\D*\x )}
            		) ;
            		
\def\a{0.7};\def\A{1};
\def\b{0.2};\def\B{2};
\def\c{0.5};\def\C{1};
\def\d{0.05};\def\D{3};
\draw [smooth,domain=0:360, shift={(-2.05,0)}] plot 
            		(
            		{\a*cos( \A*\x )+\b*sin(\B*\x)}, 
            		{\c*sin( \C*\x ) + \d*cos(\D*\x )}
            		) ;            		

\fill (-1.25,0.2) node (q) {} circle (1.5pt);
\draw[dotted] (q) circle (0.4cm);
\draw[red, line width=1pt, rounded corners=3pt] (-1.23,0.4)--(-1.1,0.3)--(-1.1,0)--(-1.25,-0.1);

\fill (0.6,1.07) node[above] {$p$} circle (1.5pt);
\draw[dotted] (0.6,1.07) circle (0.7cm);
\node at (-0.5,1.2) {$V(p)$};
\draw[red, line width=1pt, rounded corners=3pt] (0.3,1.13)--(0.4,0.8)--(0.7,0.8)--(0.73,0.85);

\node at (0,0) {$U_i$};
\node at (1,2) {$U_j$};
\end{scope}

\draw[->] (2,1)-- node[above]{$f_{ij}$}(3,1);

\begin{scope}[shift={(4.5,0)}]
\draw[fill=black!15] (0,0) node {$D_{ij}$} circle (1cm);
\draw (0,2) circle (1cm);
\draw[red, line width =1pt] (-0.4,0.9)-- node[below]{$\gamma_{ij}$}(0.4,0.9);
\draw[dotted] (0,1) ellipse (0.8cm and 0.4cm);
\end{scope}

\draw[->] (5.8,1)-- node[above]{$h_{ij}$}(6.8,1);

\begin{scope}[shift={(8,0)}]
\draw[fill=black!15] (0,0) node{$D_{ij}$} circle (1cm);
\draw[red, line width =1.5pt] ([shift=(65:1)]0,0) arc (65:115:1);
\draw[dotted] (0,1) ellipse (0.8cm and 0.4cm);
\end{scope}
\end{tikzpicture}
\caption{The construction of the Jordan region $U_i'$, which is the part of the region $U_i$ bounded by the (red) curves $f_{ij}^{-1}(\gamma_{ij})$ and a portion of $\partial U_i$.}\label{figure:construction_ui}
\end{figure}

Let $p\in \partial U_i\cap \partial U_j$ and consider the disk $D_{ij}=f_{ij}(U_i)$, which is tangent to $D_{ji}=f_{ij}(U_j)$ at the point $f_{ij}(p)$. Consider a chord $\gamma_{ij}$ of $D_{ij}$ that is parallel to the tangent line at $f_{ij}(p)$, lies within $f_{ij}(V(p))$, and separates the center of the disk $D_{ij}$ from the point of tangency $f_{ij}(p)$. Let $D_{ij}'$ be the component of $D_{ij}\setminus \gamma_{ij}$ that contains the center of $D_{ij}$. Observe that $D_{ij}'$ has a positive distance from $D_{ji}$. Thus, the same is true for the preimages $f_{ij}^{-1}(D_{ij}')$ and $U_{j}$. See Figure \ref{figure:construction_ui} for an illustration.

By Lemma \ref{lemma:qcpush}, if $\gamma_{ij}$ is sufficiently close to $f_{ij}(p)$ then there exists a $2$-quasi\-conformal map $h_{ij}$ of $\C$ that is the identity map outside $f_{ij}(V(p))$ and maps $D_{ij}'$ onto $D_{ij}$.  (This map distorts the disk $D_{ji}$, but this does not concern us.) Note that $h_{ij}$ extends to a $2$-quasiconformal homeomorphism of $\widehat \C$. Thus, the map $f_{ij}^{-1}\circ h_{ij}\circ f_{ij}$ is the identity outside $V(p)$ and maps $V(p)\cap f_{ij}^{-1}(D_{ij}')$ onto $V(p)\cap U_i$ in a $2K^2$-quasiconformal fashion.

For each $i\in I$ we define $I(i)= \{j\in I\setminus \{i\}: \partial U_i\cap \partial U_j\neq \emptyset\}$.  Now we define $U_i'= \bigcap_{j\in I(i)}f_{ij}^{-1}(D_{ij}')$ whenever $I(i)\neq \emptyset$ and $U_i'=U_i$ otherwise. In the first case, $U_i'$ is obtained from $U_i$ by removing finitely many disjoint Jordan regions, each bounded by an arc $f_{ij}^{-1}(\gamma_{ij})$ and an arc of $\partial U_i$ with the same endpoints; it is important here that $V(p)\cap \partial U_i$ is an arc for each $p\in \partial U_i\cap \partial U_j$, $j\in I(i)$. Therefore, the set $U_i'$ is a Jordan region that is contained in $U_i$. Moreover, by construction we have $\bar{U_i'}\cap \bar{U_j'}=\emptyset$ for each $j\neq i$, as required in conclusion \ref{prop:disjoint_closures:4}; see Figure \ref{figure:construction_ui}. Since the above construction can be carried out so that the chord $\gamma_{ij}$ is arbitrarily close to the corresponding tangency point between $D_{ij}$ and $D_{ji}$, we see that the regions $U_i'$ can be taken to be arbitrarily close to $U_i$, and we can ensure that condition \ref{prop:disjoint_closures:1} is satisfied as well. 

It remains to show that the regions $\{U_i'\}_{i\in I}$ are $H(K)$-quasiconformally pairwise circularizable, as required in \ref{prop:disjoint_closures:3}. Let $i\in I$ and $k\notin I(i)$. Then $\bar {U_i}$ is disjoint from $\bar {U_k}$. Thus, $f_{ik}$ maps $U_i$ and $U_k$ to disks with disjoint closures. However, $f_{ik}(U_i')$ and $f_{ik}(U_k')$ are only subsets of these disks. Let $j\in I(i)$ be such that $\partial U_i\cap \partial U_j =\{p\}$. The map $f_{ik}\circ  f_{ij}^{-1}\circ h_{ij}\circ f_{ij}\circ f_{ik}^{-1}$ is the identity outside $f_{ik}(V(p))$ and maps $f_{ik}(V(p)\cap U_i')$ onto $f_{ik}(V(p)\cap U_i)$ in a $2K^4$-quasiconformal fashion. Also, $\bar{V(p)}\cap \bar{U_k}=\emptyset$ by the choices in the beginning of the proof. The analogous statements apply if $j\in I(k)$. Since the sets $V(p)$ have disjoint closures, we may paste these maps to construct a homeomorphism $\phi_{ik}$ of $\widehat \C$ that maps $f_{ik}(U_i')$ and $f_{ik}(U_k')$ onto $f_{ik}(U_i)$ and $f_{ik}(U_k)$, respectively; see Figure \ref{figure:construction_phi}. The composition $\phi_{ik}\circ f_{ik}$ is $2K^5$-quasiconformal and maps $U_i'$ and $U_k'$ to disks, as desired. 

\begin{figure}
\begin{tikzpicture}
\begin{scope}
\draw[fill=black!15] (0,0) node {$f_{ik}(U_i')$} circle (1cm);
\fill[white] ([shift=(120:1)]0,0) arc (-170:26:0.2cm);
\draw[red] ([shift=(120:1)]0,0) arc (-170:26:0.2cm);
\draw[dotted] ([shift=(110:1)]0,0) circle (0.4cm);
\fill[white] ([shift=(0:1)]0,0) arc (90:246:0.2cm);
\draw[red] ([shift=(0:1)]0,0) arc (90:250:0.2cm);
\draw[dotted] ([shift=(-10:1)]0,0) circle (0.4cm);
\end{scope}

\begin{scope}[shift={(0,2.5)}, rotate=50]
\draw[fill=black!15] (0,0) node {$f_{ik}(U_k')$} circle (1cm);
\fill[white] ([shift=(120:1)]0,0) arc (-170:26:0.2cm);
\draw[red] ([shift=(120:1)]0,0) arc (-170:26:0.2cm);
\draw[dotted] ([shift=(110:1)]0,0) circle (0.4cm);
\fill[white] ([shift=(0:1)]0,0) arc (90:246:0.2cm);
\draw[red] ([shift=(0:1)]0,0) arc (90:250:0.2cm);
\draw[dotted] ([shift=(-10:1)]0,0) circle (0.4cm);
\end{scope}

\begin{scope}[shift={(5,0)}]
\draw[fill=black!15] (0,0)  circle (1cm);
\draw[red, line width=1.5] ([shift=(95:1)]0,0) arc (95:120:1cm);
\draw[dotted] ([shift=(110:1)]0,0) circle (0.4cm);
\draw[red, line width=1.5] ([shift=(4:1)]0,0) arc (4:-25:1cm);
\draw[dotted] ([shift=(-10:1)]0,0) circle (0.4cm);
\end{scope}

\begin{scope}[shift={(5,2.5)}, rotate=50]
\draw[fill=black!15] (0,0)  circle (1cm);
\draw[red, line width=1.5] ([shift=(95:1)]0,0) arc (95:120:1cm);
\draw[dotted] ([shift=(110:1)]0,0) circle (0.4cm);
\draw[red, line width=1.5] ([shift=(4:1)]0,0) arc (4:-25:1cm);
\draw[dotted] ([shift=(-10:1)]0,0) circle (0.4cm);
\end{scope}

\draw[->] (1.5,1.25)-- node[above]{$\phi_{ik}$} (3,1.25);

\end{tikzpicture}
\caption{The map $\phi_{ik}$ when $\bar {U_i}\cap \bar{U_k}= \emptyset$.}\label{figure:construction_phi}
\end{figure}
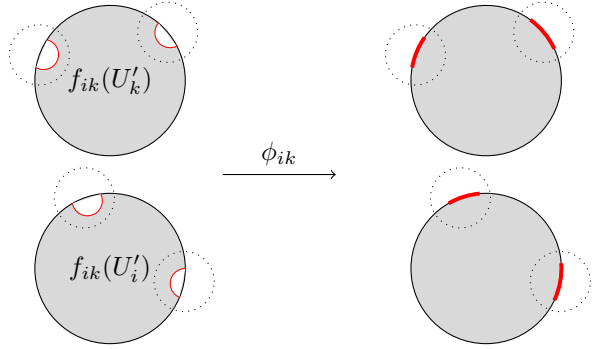

Next, suppose that $i\in I$ and $k\in I(i)$. Then $f_{ik}$ maps $U_i$ and $U_k$ to disks $D_{ik}$ and $D_{ki}$, respectively, that are tangent to each other. Applying the same procedure as above for $j\in I\setminus \{i,k\}$, we can find a map $\phi_{ik}$ that is $2K^4$-quasiconformal and maps $f_{ik}(U_i')$ onto $D_{ik}'$ and $f_{ik}(U_k')$ onto $D_{ki}'$; recall that $D_{ik}'$ is the component of $D_{ik}\setminus \gamma_{ik}$ that contains the center of the disk $D_{ik}$. See Figure \ref{figure:construction_psi} for an illustration.

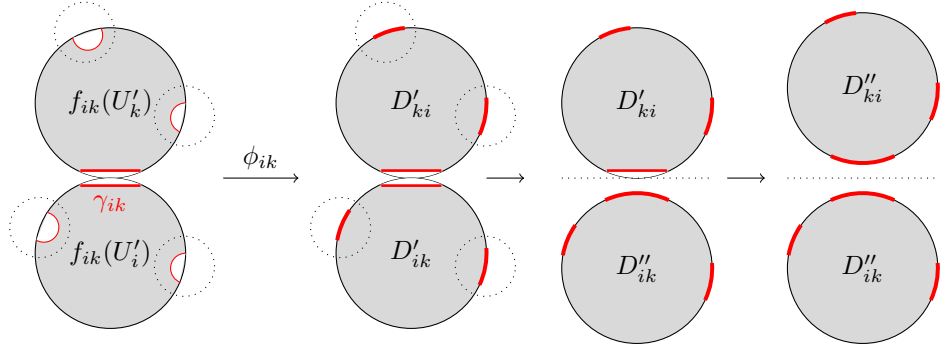
\begin{figure}
\begin{tikzpicture}
\begin{scope}[shift={(0,2)}]
\draw[fill=black!15]  (0,0) node {$f_{ik}(U_k')$} circle (1cm);
\fill[white] ([shift=(120:1)]0,0) arc (-170:26:0.2cm);
\draw[red] ([shift=(120:1)]0,0) arc (-170:30:0.2cm);
\draw[dotted] ([shift=(110:1)]0,0) circle (0.4cm);
\fill[white] ([shift=(0:1)]0,0) arc (90:246:0.2cm);
\draw[red] ([shift=(0:1)]0,0) arc (90:250:0.2cm);
\draw[dotted] ([shift=(-10:1)]0,0) circle (0.4cm);
\fill[white] ([shift=(-65:1)]0,0) arc (-65:-115:1);
\draw[red, line width =1pt] (-0.4,-0.9)-- (0.4,-0.9);
\end{scope}

\begin{scope}[shift={(0,0)}]
\draw[fill=black!15]  (0,0)node {$f_{ik}(U_i')$}  circle (1cm);
\fill[white,rotate=50] ([shift=(120:1)]0,0) arc (-170:26:0.2cm);
\draw[red,rotate=50] ([shift=(120:1)]0,0) arc (-170:30:0.2cm);
\draw[dotted,rotate=50] ([shift=(110:1)]0,0) circle (0.4cm);
\fill[white]([shift=(0:1)]0,0) arc (90:246:0.2cm);
\draw[red] ([shift=(0:1)]0,0) arc (90:250:0.2cm);
\draw[dotted] ([shift=(-10:1)]0,0) circle (0.4cm);
\fill[white] ([shift=(65:1)]0,0) arc (65:115:1);
\draw[red, line width =1pt] (-0.4,0.9)-- node[below]{$\gamma_{ik}$}(0.4,0.9);
\end{scope}

\begin{scope}[shift={(4,2)}]
\draw[fill=black!15] (0,0) node {$D_{ki}'$} circle (1cm);
\draw[red, line width=1.5] ([shift=(95:1)]0,0) arc (95:120:1cm);
\draw[dotted] ([shift=(110:1)]0,0) circle (0.4cm);
\draw[red, line width=1.5] ([shift=(4:1)]0,0) arc (4:-25:1cm);
\draw[dotted] ([shift=(-10:1)]0,0) circle (0.4cm);
\fill[white] ([shift=(-65:1)]0,0) arc (-65:-115:1);
\draw[red, line width =1pt] (-0.4,-0.9)-- (0.4,-0.9);
\end{scope}

\begin{scope}[shift={(4,0)}]
\draw[fill=black!15] (0,0) node {$D_{ik}'$} circle (1cm);
\draw[red, line width=1.5,rotate=50] ([shift=(95:1)]0,0) arc (95:120:1cm);
\draw[dotted,rotate=50] ([shift=(110:1)]0,0) circle (0.4cm);
\draw[red, line width=1.5] ([shift=(4:1)]0,0) arc (4:-25:1cm);
\draw[dotted] ([shift=(-10:1)]0,0) circle (0.4cm);
\fill[white] ([shift=(65:1)]0,0) arc (65:115:1);
\draw[red, line width =1pt] (-0.4,0.9)--(0.4,0.9);
\end{scope}

\draw[->] (1.5,1)-- node[above]{$\phi_{ik}$} (2.5,1);

\begin{scope}[shift={(7,2)}]
\draw[fill=black!15] (0,0) node {$D_{ki}'$} circle (1cm);
\draw[red, line width=1.5] ([shift=(95:1)]0,0) arc (95:120:1cm);
\draw[red, line width=1.5] ([shift=(4:1)]0,0) arc (4:-25:1cm);
\fill[white] ([shift=(-65:1)]0,0) arc (-65:-115:1);
\draw[red, line width =1pt] (-0.4,-0.9)-- (0.4,-0.9);
\end{scope}

\begin{scope}[shift={(7,-0.2)}]
\draw[fill=black!15] (0,0) node {$D_{ik}''$} circle (1cm);
\draw[red, line width=1.5,rotate=50] ([shift=(95:1)]0,0) arc (95:120:1cm);
\draw[red, line width=1.5] ([shift=(4:1)]0,0) arc (4:-25:1cm);
\draw[red, line width =1.5pt] ([shift=(65:1)]0,0) arc (65:115:1);
\end{scope}

\draw[dotted] (6,1)--(8,1);

\draw[->] (5,1)--(5.5,1);

\begin{scope}[shift={(10,2.2)}]
\draw[fill=black!15] (0,0) node {$D_{ki}''$}  circle (1cm);
\draw[red, line width=1.5] ([shift=(95:1)]0,0) arc (95:120:1cm);
\draw[red, line width=1.5] ([shift=(4:1)]0,0) arc (4:-25:1cm);
\draw[red, line width =1.5pt] ([shift=(-65:1)]0,0) arc (-65:-115:1);
\end{scope}

\begin{scope}[shift={(10,-0.2)}]
\draw[fill=black!15] (0,0) node {$D_{ik}''$} circle (1cm);
\draw[red, line width=1.5,rotate=50] ([shift=(95:1)]0,0) arc (95:120:1cm);
\draw[red, line width=1.5] ([shift=(4:1)]0,0) arc (4:-25:1cm);
\draw[red, line width =1.5pt] ([shift=(65:1)]0,0) arc (65:115:1);
\end{scope}

\draw[dotted] (9,1)--(11,1);
\draw[->] (8.2,1)--(8.7,1);
\end{tikzpicture}
\caption{The map $\phi_{ik}$ when $\bar {U_i}\cap \bar{U_k}\neq \emptyset$.}\label{figure:construction_psi}
\end{figure}

Finally, by Lemma \ref{lemma:qcpull} there exists a $2$-quasiconformal map of $\C$ that maps $D_{ik}'$ onto a disk $D_{ik}''$ and is the identity map on a half-plane that is disjoint from $D_{ik}$ and contains $D_{ki}$; see Figure \ref{figure:construction_psi}. Similarly, there exists another $2$-quasiconformal map that maps $D_{ki}'$ onto a disk $D_{ki}''$ and is the identity map on a half-plane that is disjoint from $D_{ki}$ and contains $D_{ik}''$. Let $g_{ik}$ be the composition of these two quasiconformal maps, which is $2$-quasiconformal in $\C$ and extends to a $2$-quasiconformal map of $\widehat \C$. The map $\psi_{ik}=g_{ik}\circ \phi_{ik}\circ f_{ik}$ is $4K^5$-quasiconformal and maps $U_i'$ and $U_k'$ to disks. This completes the proof.
\end{proof}

\section{Geometry of annuli and disks}

\subsection{Annulus width}
Let $(X,d)$ be a metric space. A \textit{(closed) annulus} in $X$ is a set $A\subset X$ of the form
$$A=A(x;r,R)= \{y\in X: r\leq d(x,y)\leq R\},$$
where $x\in X$, $0<r< R$, and $X\setminus \bar B_d(x,R)\neq \emptyset$. We define the \textit{width} of the annulus $A$ to be
$$w_A=\log(R/r).$$
Moreover for a set $K\subset X$, the \textit{width of $K$ relative to $A$} is defined as
$$w_{A}(K)=\inf\{ \log(R'/r'): A\cap K\subset A(x;r',R')\subset A\}.$$
By definition, if $A\cap K=\emptyset$, then $w_A(K)=0$. Also, if $w_A(K)>0$, then there exists an annulus $A'=A(x;r',R')$, concentric with $A$, such that $A\cap K\subset {A'}\subset  A$ and $w_A(K)=w_{A'}$. Namely, 
\begin{align*}
r'=r_A(K)\coloneqq \inf_{y\in A\cap K} d(x,y)\quad \text{and} \quad R'=R_A(K)\coloneqq \sup_{y\in A\cap K} d(x,y)
\end{align*}
so we have $A'=A(x; r_A(K), R_A(K))$ and 
$$w_A(K) = \log\frac{R_A(K)}{r_A(K)}.$$

For $x\in \C$ and $0<r<R<\infty$ we consider the Euclidean annulus 
$$A=A^e(x;r,R)= \{ y\in \C: r\leq |x-y|\leq R\}.$$
As a convention, in this case we will use the superscript $e$ in the above notions and we write $w^e_A$, $w^e_A(K)$, $r^e_A(K)$, $R^e_A(K)$. For $x\in \widehat \C$ and $0<r<R<\pi$ consider the spherical annulus
$$A=A(x;r,R)=\{y\in \widehat \C: r\leq \sigma(x,y)\leq R\}.$$
In the case of spherical annuli, we use the original notation $w_A$, $w_A(K)$, $r_A(K)$, $R_A(K)$ without any alteration. Observe that each Euclidean or spherical annulus has two boundary components that are circles.

The next elementary lemma is a modification of \cite{Bonk:uniformization}*{Lemma 8.6}. It essentially says that given an annulus $A$ and a collection $\mathcal K$ of continua in $\widehat \C$, one can find a large subannulus $A'$ of $A$ such that either all sets of $\mathcal K$, with the exception of at most one, have small width relative to $A'$ or (at least) two sets of $\mathcal K$ intersect both boundary components of $A'$. 

\begin{lemma}\label{lemma:subannulus}
Let $\mathcal K=\{K_i\}_{i\in I}$ be a collection of continua in $\widehat \C$. For every annulus $A$ in $\widehat \C$ there exists an annulus $A'\subset A$ that is concentric with $A$ such that 
$$w_{A'}\geq \min\{w_A,w_A^{1/9}\}$$
and one of the following two alternatives holds.
\begin{enumerate}[label=\normalfont(\roman*)]
\item\label{lemma:subannulus:1}  $w_{A'}(K_i)\leq w_{A'}^{1/3}$ for all $i\in I$ with the exception of at most one index.
\item\label{lemma:subannulus:2} There exist distinct $i_1,i_2\in I$ such that $K_{i_1}$ and $K_{i_2}$ intersect both boundary components of $A'$. 
\end{enumerate}
\end{lemma}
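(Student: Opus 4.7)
I would begin by reducing the lemma to an equivalent statement about subintervals of $[0,\Lambda]$, where $\Lambda:=w_A$. Parametrize $A=A(x;r,R)$ by $t=\log(d(x,\cdot)/r)\in[0,\Lambda]$, identifying $A$ with $[0,\Lambda]$. Since each $K_i$ is connected and the map $y\mapsto\log(d(x,y)/r)$ is continuous, the image of $K_i\cap A$ is a closed sub-interval $J_i\subseteq[0,\Lambda]$ of length $w_A(K_i)$. Concentric sub-annuli $A'\subseteq A$ then correspond to sub-intervals $[s,t]\subseteq[0,\Lambda]$ with $w_{A'}=t-s$ and $w_{A'}(K_i)=|J_i\cap[s,t]|$. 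Thus I look for $[s,t]\subseteq[0,\Lambda]$ of length $\ge\min(\Lambda,\Lambda^{1/9})$ realizing either \emph{(i')} $|J_i\cap[s,t]|\le(t-s)^{1/3}$ for all but one $i$, or \emph{(ii')} $J_{i_1}\cap J_{i_2}\supseteq[s,t]$ for some $i_1\ne i_2$. When $\Lambda\le 1$, take $A'=A$: then $w_A(K_i)\le\Lambda\le\Lambda^{1/3}$, so (i) holds with no exception.

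Assume $\Lambda>1$ and let $\mathcal{B}=\{i:|J_i|>\Lambda^{1/3}\}$ be the collection of ``big'' shadows. If $|\mathcal{B}|\le 1$, take $A'=A$: every $i\notin\mathcal{B}$ satisfies $w_A(K_i)\le\Lambda^{1/3}=w_{A'}^{1/3}$, and the possibly one element of $\mathcal{B}$ serves as the allowed exception in (i). Otherwise, if some pair $i_1\ne i_2\in\mathcal{B}$ satisfies $|J_{i_1}\cap J_{i_2}|\ge\Lambda^{1/9}$, then the intersection is an interval of length $\ge\Lambda^{1/9}$ contained in both shadows; for the corresponding sub-annulus $A'$ both $K_{i_1}$ and $K_{i_2}$ meet both boundary components, giving (ii).

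The remaining---and main---case is $|\mathcal{B}|\ge 2$ with $|J_i\cap J_j|<\Lambda^{1/9}$ for every pair $i\ne j\in\mathcal{B}$. The crucial geometric input here is a \emph{depth bound}: at any point $x\in[0,\Lambda]$, the number of shadows in $\mathcal{B}$ containing $x$ is bounded by an absolute constant $C$. To see this, write $J_i=[x-l_i,x+r_i]$ with $l_i+r_i>\Lambda^{1/3}$, which forces $\max(l_i,r_i)>\Lambda^{1/3}/2$. The pairwise-overlap constraint reads $\min(l_i,l_j)+\min(r_i,r_j)<\Lambda^{1/9}$, and since $\Lambda^{1/3}/2\gg\Lambda^{1/9}$ for large $\Lambda$, a classification of each $J_i$ by whether it is left-heavy ($l_i>\Lambda^{1/3}/2$), right-heavy ($r_i>\Lambda^{1/3}/2$), or both forces at most one shadow per class to pass through $x$. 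Consequently $\sum_{i\in\mathcal{B}}|J_i|\le C\Lambda$ and $|\mathcal{B}|\le C\Lambda^{2/3}$. I would then cover $[0,\Lambda]$ by $\lceil\Lambda^{8/9}\rceil$ disjoint windows of length $\Lambda^{1/9}$; since (ii') fails, no window is contained in two big shadows, and a counting argument combined with the depth bound isolates a window in which at most one shadow has significant intersection, realizing (i).

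\textbf{Main obstacle.} The delicate part is handling continua of intermediate length, namely $|J_i|\in(\Lambda^{1/27},\Lambda^{1/3}]$: these fall outside the depth bound for $\mathcal{B}$ yet can have intersection with a window of length $\Lambda^{1/9}$ exceeding the threshold $\Lambda^{1/27}=(\Lambda^{1/9})^{1/3}$, so they may obstruct (i) at a chosen window. The pairwise-overlap bound $<\Lambda^{1/9}$ is too coarse at this intermediate scale to preclude their accumulation. A successful argument must therefore perform a further nested scale reduction inside the window selected at the first level, passing to a sub-window of length $\gtrsim\Lambda^{1/9}$ on which the medium shadows behave well. The two-level reduction $\Lambda\to\Lambda^{1/3}\to\Lambda^{1/9}$ is reflected in the exponents of the lemma, and formalizing this nested step---ensuring that width is not lost beyond $\Lambda^{1/9}$ and that the exceptional index is preserved---is the technical heart of the proof.
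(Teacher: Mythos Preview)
Your proposal is incomplete, as you yourself acknowledge: the ``main obstacle'' of intermediate-length shadows is never resolved, and the counting/depth-bound machinery you set up does not obviously close the gap. More importantly, you are missing the key idea that makes the lemma almost trivial.

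The point you overlook is this: rather than searching for a window of prescribed length $\Lambda^{1/9}$ and trying to control what enters it, you should \emph{zoom into the shadow of a violating continuum}. Concretely, if $w_A(K_i)\le w_A^{1/3}$ for every $i$, take $A'=A$ and alternative (i) holds with no exception. Otherwise pick $i_1$ with $w_A(K_{i_1})>w_A^{1/3}$ (which forces $w_A>1$), and let $A_1\subset A$ be the concentric subannulus with $A\cap K_{i_1}\subset A_1$ and $w_{A_1}=w_A(K_{i_1})>w_A^{1/3}>w_A^{1/9}$. By construction $K_{i_1}$ meets both boundary circles of $A_1$. Now repeat the test in $A_1$: if $w_{A_1}(K_i)\le w_{A_1}^{1/3}$ for all $i\ne i_1$, set $A'=A_1$ and (i) holds with the single exception $i_1$. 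Otherwise pick $i_2\ne i_1$ with $w_{A_1}(K_{i_2})>w_{A_1}^{1/3}$, and let $A_2\subset A_1$ be concentric with $w_{A_2}=w_{A_1}(K_{i_2})>w_{A_1}^{1/3}>w_A^{1/9}$. Then $K_{i_2}$ meets both boundary components of $A_2$ by construction, and $K_{i_1}$ does too because it is connected and already crosses the larger annulus $A_1\supset A_2$. Set $A'=A_2$; alternative (ii) holds.

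That is the entire argument: two greedy passes, no counting, no depth bounds, no handling of intermediate scales. The exponents $1/3$ and $1/9=(1/3)^2$ arise simply because you apply the threshold $w^{1/3}$ twice in succession. Your approach tries to fix the window first and then control the continua, which forces you to deal with all scales at once; the paper's approach lets the continua themselves determine the window, which automatically makes the offending $K_{i_1}$ either the sole exception or one of the two crossing sets.
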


The exponent $1/3$ that appears in the first alternative at this point seems arbitrary, since the proof can be carried out with other restrictions as well. The importance of the number $1/3$ will become evident later in Proposition \ref{prop:modulus_upper}.

\begin{proof}
If $w_A(K_i)\leq w_A^{1/3}$ for all $i\in I$, then we set $A'=A$. Otherwise, there exists $i_1\in I$ such that $w_A\geq w_A(K_{i_1})>w_A^{1/3}$, so $w_A>w_{A}^{1/3}$ and $w_A>1$. Let $A_1$ be an annulus concentric with $A$ such that $A\cap K_{i_1}\subset A_1\subset A$ and $w_{A}(K_{i_1})=w_{A_1}$. Since $K_{i_1}$ is closed, it meets both boundary components of $A_1$. Moreover,  we have $w_{A_1}> w_A^{1/3}>w_A^{1/9}$. If $w_{A_1}(K_i)\leq w_{A_1}^{1/3}$ for all $i\in I\setminus\{i_1\}$, then we set $A'=A_1$. This completes the proof of the first alternative.

Otherwise, there exists $i_2\in I\setminus \{i_1\}$ such that $w_{A_1}(K_{i_2}) > w_{A_1}^{1/3}$. Let $A_2$ be an annulus concentric with $A$ such that $A_1\cap K_{i_2}\subset A_2\subset A_1$ and $w_{A_1}(K_{i_2})=w_{A_2}$. Since $K_{i_2}$ is closed, it meets both boundary components of $A_2$. Since $K_{i_1}$ is connected, it has the same property. We have $w_{A_2}>w_{A_1}^{1/3}>w_A^{1/9}$. We set $A'=A_2$ and this completes the proof.  
\end{proof}

\subsection{Width of disks relative to an annulus}
The statements that we prove in this section essentially imply that given a collection of disjoint disks in the sphere, at most two of them can have large width relative to a fixed annulus. 

\begin{proposition}\label{prop:annulus_euclidean_general}
Let $\alpha\geq 1$ and $A=A^e(x;r,R)$ be an annulus in $\C$. Let $K_1,K_2\subset \widehat \C$ be disjoint closed disks with $\diam_e(K_i)\geq \alpha^{-1}R$ and $\dist_e(K_i,x)\leq \alpha r$ for $i=1,2$. Then for every closed disk $D\subset \widehat \C\setminus (K_1\cup K_2)$ we have
$$w_A^e(D)\leq C(\alpha).$$ 
\end{proposition}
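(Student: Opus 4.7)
The plan is a direct geometric argument based on the inequalities governing three pairwise disjoint disks $K_1, K_2, D$ in $\widehat \C$.

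Normalize by a Euclidean similarity so that $x = 0$ and $r = 1$, making $A = \{y : 1 \leq |y| \leq R\}$ and the hypotheses become $\diam_e(K_i) \geq R/\alpha$ and $\dist_e(K_i, 0) \leq \alpha$. The case $R \leq R_0(\alpha)$ is immediate from $w_A^e(D) \leq \log R$, so I assume $R$ is large. Let $y_1, y_2 \in D \cap A$ realize $|y_1| = r^* := r_A^e(D)$ and $|y_2| = R^* := R_A^e(D)$; the goal is to prove $R^*/r^* \leq C(\alpha)$.

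I split into cases depending on whether each of $K_1, K_2, D$ is a Euclidean disk or contains $\infty$. The main case is when $K_1, K_2$ are both Euclidean disks $\bar B(c_i, \rho_i)$ with $\rho_i \geq R/(2\alpha)$ and $|c_i| \leq \rho_i + \alpha$, and $D = \bar B(c_D, \rho_D)$ is also a Euclidean disk. The points $y_1, y_2 \in D$ give $\rho_D \geq |y_1 - y_2|/2 \geq (R^* - r^*)/2$, and the triangle inequality yields $R^* - \rho_D \leq |c_D| \leq r^* + \rho_D$. Combining these with the disjointness $|c_D - c_i| > \rho_D + \rho_i$ and $|c_i| \leq \rho_i + \alpha$, an elementary optimization bounds $R^*/r^*$ by a constant depending only on $\alpha$; the key numerical insight comes from the symmetric model $K_1 = \bar B(-\rho - d, \rho)$, $K_2 = \bar B(\rho + d, \rho)$ with $\rho \geq R/(2\alpha)$ and $d \leq \alpha$, for which the extremal $D$ is a disk tangent to both $K_i$ from the upper half-plane and yields a ratio of order $3 + 2\sqrt{2}$ in the large-$R$ limit.

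The remaining cases, in which $\infty$ belongs to one of $K_1, K_2, D$, reduce to the Euclidean case by applying a M\"obius transformation (e.g., an inversion in a suitable circle) that exchanges $0$ and $\infty$ while preserving the Euclidean annulus $A$ up to a bounded distortion.

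The main obstacle is performing the geometric bookkeeping uniformly across the cases and extracting an explicit constant $C(\alpha)$. A conceptually cleaner alternative uses the M\"obius-invariance of the Euclidean cross ratio $[y_1, y_2, 0, \infty] = r^*/R^*$: transferring via a M\"obius map $\mu$ that normalizes $(K_1, K_2)$ to the standard concentric pair $(\widehat \C \setminus B(0,1), \bar B(0, s))$ with $s \in (0,1)$, the cross ratio becomes that of four points in the closed round annulus $\{s \leq |z| \leq 1\}$, two of which ($\mu(y_1), \mu(y_2)$) lie in the same closed disk $\mu(D)$ of Euclidean diameter at most $1 - s$, which together with the quantitative control of the positions of $\mu(0), \mu(\infty)$ inside the annulus yields the lower bound $r^*/R^* \geq c(\alpha) > 0$.
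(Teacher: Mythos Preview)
Your approach is genuinely different from the paper's. The paper argues by contradiction and compactness: normalize $x=0$, $R=1$, assume a sequence of counterexamples with $w^e_{A_n}(D_n)\to\infty$, rescale by $z\mapsto z/R^e_{A_n}(D_n)$, and pass to subsequential limits. The limiting disks $K_1(\infty),K_2(\infty),D_\infty$ then have pairwise disjoint interiors yet all contain $0$ in their boundaries, which is impossible for three round disks. This argument is non-effective (no explicit $C(\alpha)$) but needs no casework and absorbs the possibility $\infty\in K_i$ or $\infty\in D$ automatically.

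Your direct route is sound in principle, and has the advantage of potentially yielding an explicit constant, but the proposal leaves the decisive steps undone. In the main Euclidean case the ``elementary optimization'' is asserted rather than performed; the symmetric model you describe gives the extremal ratio for one configuration, not a bound valid for arbitrary $c_1,c_2,c_D$. The cross-ratio alternative is more promising, but the claimed ``quantitative control of the positions of $\mu(0),\mu(\infty)$'' is not established: the modulus $s$ of the concentric pair is governed by the conformal separation of $K_1,K_2$, and you would still need to relate it to the Euclidean hypotheses (for instance via $\Delta(K_1,K_2)\leq 2\alpha^2/R$) and then show that $\mu(0),\mu(\infty)$ land in controlled positions relative to $\mu(D)$. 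None of this is hard, but none of it is written. If you only need existence of $C(\alpha)$, the paper's limiting argument is much shorter; if you want an effective bound, your plan is the right one but must be carried through.
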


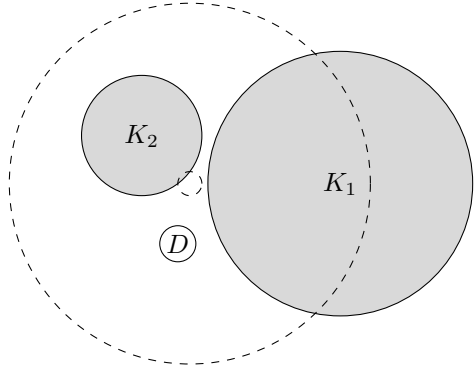
\begin{figure}
\begin{tikzpicture}[scale=0.8]

\draw[fill=black!15] (2.5,0) node{$K_1$}  circle (2.2cm);
\draw[fill=black!15] (-0.8,0.8) node {$K_2$} circle (1cm);

\draw (-0.2,-1) node {$D$} circle (0.3cm);

\draw[dashed] (0,0) circle (3cm);
\draw[dashed] (0,0) circle (0.2cm);

\end{tikzpicture}

\caption{The relative position of the annulus $A$ (dashed) and of the disks $K_1,K_2,D$ in Proposition \ref{prop:annulus_euclidean_general}.}\label{figure:annulus_euclidean_general}
\end{figure}

\begin{proof}
Note that $w_A^e(D)$ is invariant under translation and scaling. Thus, it suffices to show the statement for $x=0$ and $R=1$. Also observe that any closed disk $D\subset \widehat \C$ with $w_A^e(D)>0$ contains a disk $D'\subset D\cap  A$ with $w_A^e(D)=w_A^e(D')$. To see this, by rotating we may assume that $D$ is symmetric with respect to the real axis and the segment $[r_A^e(D),R_A^e(D)]$ is contained in $D$; then $D'$ is the disk centered at the midpoint of this segment with radius $\frac{1}{2}(R_A^e(D)-r_A^e(D))$. Thus, we may restrict to the case that $D\subset  A\setminus (K_1\cup K_2)$. See Figure \ref{figure:annulus_euclidean_general} for an illustration.

Suppose, for the sake of contradiction, that the statement fails for some $\alpha\geq 1$. Then for each $n\in \N$ there exists an annulus $A_n=A^e(0;r_n,1)$ and disjoint closed disks $K_1(n),K_2(n)\subset \widehat \C$ with $\diam_e(K_i(n)) \geq \alpha^{-1}$ and $\dist_e(K_i(n),0)\leq \alpha r_n$ for $i=1,2$ such that a disk $D_n\subset {A_n} \setminus (K_1(n)\cup K_2(n))$ satisfies $w^e_{A_n}(D_n)\to \infty$ as $n\to\infty$. For each $n\in \N$ we have
\begin{align}\label{prop:annulus_euclidean:contradiction}
\exp(w_{A_n}^e)=\frac{1}{r_n}\geq \frac{R_{A_n}^e(D_n)}{r_n}\geq \frac{R_{A_n}^e(D_n)}{r_{A_n}^e(D_n)}= \exp(w^e_{A_n}(D_n)).
\end{align}

Consider the scaling $\lambda_n(z)=z/R_{A_n}^e(D_n)$, $n\in \N$. Observe that, after passing to a subsequence, the disks $\lambda_n(D_n)$, $n\in \N$, converge to a disk $D_\infty$ (in the sense of convergence of centers and radii) with diameter equal to $1$ and  $0\in \partial D_\infty$. The latter is true because 
$$\dist_e(\lambda_n(D_n),0)= \frac{\dist_e(D_n,0)}{R_{A_n}^e(D_n)}= \frac{r_{A_n}^e(D_n)}{R_{A_n}^e(D_n)}=\exp(-w_{A_n}^e(D_n)),$$
which converges to $0$ as $n\to\infty$. On the other hand, for $i=1,2$ we have
$$\diam_e(\lambda_n(K_i(n)))= \frac{\diam_e(K_i(n))}{R_{A_n}^e(D_n)}\geq \alpha^{-1}$$
and 
$$\dist_e(\lambda_n(K_i(n)),0)= \frac{\dist_e(K_i(n),0)}{R_{A_n}^e(D_n)}\leq \frac{\alpha r_n}{R_{A_n}^e(D_n)},$$
which converges to $0$ as $n\to\infty$, by \eqref{prop:annulus_euclidean:contradiction}. Therefore, after passing to a further subsequence, we see that the sequences of disks $\lambda_n(K_i(n))$, $n\in \N$, for $i=1,2$, converge to closed disks $K_1(\infty),K_2(\infty)$ (in $\widehat \C$), respectively, with disjoint interiors and with $0\in \partial K_1(\infty)\cap \partial K_2(\infty)$.  The disks $D_\infty, K_1(\infty),K_2(\infty)$ have disjoint interiors and contain the point $0$ in their boundary. This is a contradiction. 
\end{proof}

\begin{lemma}\label{lemma:annulus_euclidean}
There exists a universal constant $\alpha_0>0$ with the following property. For every annulus $A$ in $\C$ and each pair of disjoint closed disks $K,L$ in $\widehat \C$ that intersect both boundary components of $A$ the following statement is true. If $L'$ is the reflection of the disk $L$ across $\partial K$, then every closed disk $D\subset \widehat \C \setminus (L\cup L')$ satisfies $w_A^e(D)\leq \alpha_0$. 
\end{lemma}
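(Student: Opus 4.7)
My plan is to reduce the lemma to Proposition~\ref{prop:annulus_euclidean_general} applied with $K_1 = L$ and $K_2 = L'$. Since $w_A^e(\cdot)$ is invariant under translation and scaling, I may assume $A = A^e(0;r,1)$. If $r \geq 1/4$, then $w_A^e(D) \leq w_A = \log(1/r) \leq \log 4$ and the bound is trivial, so I assume $r < 1/4$. The hypotheses of Proposition~\ref{prop:annulus_euclidean_general} for $L$ are immediate from the fact that $L$ meets both $\partial B(0,r)$ and $\partial B(0,1)$: one obtains $\diam_e(L) \geq 1-r > 3/4$ and $\dist_e(L,0) \leq r$.

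The heart of the proof is then to establish the analogous bounds $\diam_e(L') \geq c_0$ and $\dist_e(L',0) \leq C_0 r$ for absolute constants $c_0, C_0 > 0$; once these are in place, Proposition~\ref{prop:annulus_euclidean_general} applied with $\alpha = \max\{4/3, 1/c_0, C_0\}$ furnishes a universal $\alpha_0 = C(\alpha)$. I would prove these via a direct computation with the reflection formula $\rho_{\partial K}(z) = c + \rho^2/\br{z-c}$, where $K = \br{B(c,\rho)}$ (the case $\infty \in K$ is analogous). The constraints that $K$ meets $\partial B(0,r)$ and $\partial B(0,1)$ but contains neither $\br{B(0,r)}$ nor $\br{B(0,1)}$ (otherwise the disjoint disk $L$ could not meet those circles) translate into the uniform estimates
\[
\rho \geq (1-r)/2, \qquad \bigl||c|-\rho\bigr| \leq r, \qquad |c|/\rho \in (1/3,\,5/3).
\]
Moreover, $L \cap K = \emptyset$ forces $|p-c| > \rho$ for every $p \in L$. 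The clean identity
\[
\rho_{\partial K}(p) - \rho_{\partial K}(0) = \frac{\rho^2\, \br{p}}{\br{p-c}\,\br{c}},
\]
together with $\rho_{\partial K}(0) = (|c|^2 - \rho^2)/\br c$ and $||c|^2 - \rho^2| = ||c|-\rho|(|c|+\rho) \leq r(2|c|+r)$, yields $|\rho_{\partial K}(0)| = O(r)$ uniformly. Applying the identity to $p_0 \in L$ with $|p_0| = r$ and using $|p_0-c| > \rho$ together with $|c| \geq \rho/3$ gives $|\rho_{\partial K}(p_0) - \rho_{\partial K}(0)| \leq \rho r/|c| \leq 3r$, producing the distance bound. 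For the diameter bound, applying the identity to $p_1 \in L$ with $|p_1|=1$ and using $|p_1-c| \leq 1+|c|$ gives $|\rho_{\partial K}(p_1) - \rho_{\partial K}(0)| \geq \rho^2/(|c|(1+|c|))$, which is bounded below by a universal positive constant; for sufficiently small $r$, the images $\rho_{\partial K}(p_0), \rho_{\partial K}(p_1) \in L'$ are therefore at universal distance apart, and the remaining bounded range of $r$ is disposed of by a direct compactness argument.

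The main obstacle is the regime $\rho \to \infty$, where $K$ degenerates to a half-plane and a naive Lipschitz estimate for $\rho_{\partial K}$ would blow up. The cancellation that saves the day is $|c|^2 - \rho^2 = (|c|-\rho)(|c|+\rho)$, in which the factor $|c|-\rho = O(r)$ coming from the annulus constraint exactly compensates the growing factor $|c|+\rho \asymp \rho$ after division by $|c| \asymp \rho$, yielding the uniform control of $\rho_{\partial K}$ near $0$. As an alternative, less computational route, one could argue by Hausdorff compactness on $\widehat\C$: any sequence of counterexamples with $r_n \to 0$ would extract to a limiting configuration in which $K_\infty$ and $L_\infty$ are disks tangent at $0$ and $L_\infty' = \rho_{\partial K_\infty}(L_\infty)$ is a nondegenerate disk through $0$, contradicting the failure of the bounds after an additional rescaling by $r_n$ to upgrade the qualitative convergence $\dist_e(L_n',0) \to 0$ to the required quantitative rate $O(r_n)$.
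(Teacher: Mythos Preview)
Your proposal is correct and follows the same route as the paper: normalize to $A=A^e(0;r,1)$, dispose of large $r$ trivially, then verify $\diam_e(L')\gtrsim 1$ and $\dist_e(L',0)\lesssim r$ via the reflection formula so that Proposition~\ref{prop:annulus_euclidean_general} applies to the pair $L,L'$. The paper's computation differs only cosmetically---it estimates $|\phi(z_j)|$ directly for $z_1\in\partial L\cap\partial B(0,1)$ and $z_2\in\partial L\cap\partial B(0,r)$ rather than via your identity for $\phi(p)-\phi(0)$, and it treats the half-plane case for $K$ as an explicit one-line case; note also that its lower bound $|z_2-a|\ge|a|-r$ works uniformly regardless of whether $K$ is bounded, whereas your bound $|p_0-c|>\rho$ is specific to bounded $K$, so your ``analogous'' case does need a slightly different inequality there.
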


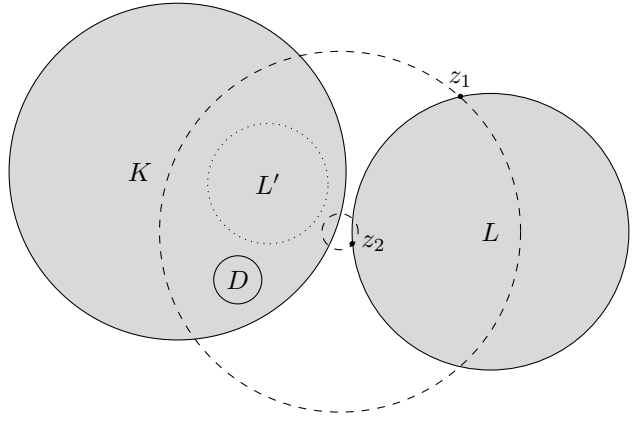
\begin{figure}
\begin{tikzpicture}[scale=0.8]

\draw[fill=black!15] (2.5,0) node{$L$}  circle (2.3cm);
\draw[fill=black!15] (-2.7,1) node[xshift=-0.5cm] {$K$} circle (2.8cm);
\draw[dotted] (-1.2,0.8) node {$L'$} circle (1cm);

\draw (-1.7,-0.8) node {$D$} circle (0.4cm);

\draw[dashed] (0,0) circle (3cm);
\draw[dashed](0,0) circle (0.3cm);

\draw[fill=black] (2,2.25) node[above] {$z_1$} circle (1pt);
\draw[fill=black] (0.2,-0.2) node[anchor=west] {$z_2$} circle (1pt);
\end{tikzpicture}

\caption{The relative position of the annulus $A$ (dashed) and of the disks $K,L,L',D$ in Lemma \ref{lemma:annulus_euclidean} and Lemma \ref{lemma:annulus_spherical}.}\label{figure:lemma:annulus_euclidean}
\end{figure}

\begin{proof}
Since $w_A^e(D)$ is invariant under translation and scaling, and circle reflections are natural under conjugation by M\"obius transformations (see \cite{Beardon:groups}*{Theorem 3.2.4}), we may assume that $A=A^e(0;r,1)$ and that $K,L$ are disjoint closed disks in $\widehat \C$ that intersect both boundary components of $A$. If $r>1/6$, then $w_A^e(D)\leq w_A^e\leq \log 6$ and we have nothing to show, so assume that $r\leq 1/6$. Then $\diam_e(L)\geq 1-r\geq 5/6$ and $\dist_e(L,0)\leq r$. Let $L'$ be the reflection of $L$ across $\partial K$. We show that $\diam_e(L')\geq d_0$ and $\dist_e(L',0)\leq c_0 r$ for some uniform constants $d_0,c_0>0$. By Proposition \ref{prop:annulus_euclidean_general}, this is sufficient for the desired conclusion. We consider two cases.

Suppose that $\partial K$ is a line containing the point $\infty$. Then $\diam_e(L')=\diam_e(L)\geq 5/6$ and $\dist_e(L',0)\leq 2\dist_e(K,0)+\dist_e(L,0)\leq 3r$. 

Next, suppose that $\partial K$ is a circle in $\C$ with center at a point $a\in \C$ and radius $\rho>0$. See Figure \ref{figure:lemma:annulus_euclidean} for an illustration. Note that the disk $K$ could be bounded or unbounded. Since the circle $|z|=r$ intersects both disks $K,L$, which are disjoint, there exists a point $z\in \partial K$ with $|z|=r$. This implies that $||a|-\rho|\leq r$. Similarly, since the circle $|z|=1$ intersects both $K$ and $L$, the circle $\partial K$ must also intersect the circle $|z|=1$. Thus, we have $\rho\geq (1-r)/2$ and 
\begin{align}\label{lemma:annulus_euclidean:ar}
|a|\geq \rho-r\geq (1-3r)/2 \geq 1/4\geq  3r/2> r.
\end{align}
As a consequence,
\begin{align}\label{lemma:annulus_euclidean:arho}
||a|^2-\rho^2| \leq r(|a|+\rho)\leq r(2|a|+r) \leq 3r|a|. 
\end{align}
Consider the reflection across $\partial K$, which is given by the formula
$$\phi(z)=a + \frac{\rho^2}{\bar z-\bar {a}}$$
for $z\in \widehat \C$. Let $z_1\in \partial L$ such that $|z_1|=1$. By \eqref{lemma:annulus_euclidean:arho} and \eqref{lemma:annulus_euclidean:ar}, we have
\begin{align*}
|\phi(z_1)| &= \frac{|a\bar z_1-|a|^2+\rho^2|}{|z_1-a|} \geq \frac{ |a| - 3r|a| }{1+|a|}\geq  \frac{|a|(1-3r)}{5|a|}\geq \frac{1}{10}.
\end{align*}
Next, let $z_2\in \partial L$ with $|z_2|=r$. With similar estimates as above we have
\begin{align*}
|\phi(z_2)| \leq \frac{|a|r + 3r|a|}{|a|-r}\leq \frac{4r|a|}{|a|/3}= 12 r.
\end{align*}
We suppose that $12r\leq 1/20$, otherwise $w_A^e\leq \log 240$ and there is nothing to show. We have $\dist_e(L',0)\leq |\phi(z_2)|\leq 12 r$ and $\diam_e(L')\geq |\phi(z_1)|-|\phi(z_2)|\geq 1/10-12r \geq 1/20$. This completes the proof.
\end{proof}

We now obtain a version of Lemma \ref{lemma:annulus_euclidean} for spherical annuli. 

\begin{lemma}\label{lemma:annulus_spherical}
There exists a universal constant $\beta_0>0$ with the following property. For every annulus $A$ in $\widehat\C$ and each pair of disjoint closed disks $K,L$ in $\widehat \C$ that intersect both boundary components of $A$ the following statement is true. If $L'$ is the reflection of the disk $L$ across $\partial K$, then every closed disk $D\subset \widehat \C\setminus (L\cup L')$ satisfies $w_A(D)\leq \beta_0$. 
\end{lemma}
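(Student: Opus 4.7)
The plan is to reduce the statement to the Euclidean Lemma \ref{lemma:annulus_euclidean} by conjugating with a spherical isometry and then comparing the spherical and Euclidean widths relative to the common basepoint.

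First I would choose a rotation of the sphere (a M\"obius transformation $\phi$ that is an isometry for $\sigma$) with $\phi(x) = 0$, so that $\phi(A) = A(0; r, R)$. The identity $\sigma(0, z) = 2\arctan|z|$ identifies this spherical annulus with the Euclidean annulus $A' := A^e(0; \tan(r/2), \tan(R/2))$, which is well-defined because $R < \pi$. M\"obius invariance carries $K, L$ to disjoint closed disks $\phi(K), \phi(L) \subset \widehat\C$ still intersecting both boundary components of $A'$, and M\"obius equivariance of circle reflections makes $\phi(L')$ the reflection of $\phi(L)$ across $\partial \phi(K)$. Hence $\phi(D) \subset \widehat\C \setminus (\phi(L) \cup \phi(L'))$, so Lemma \ref{lemma:annulus_euclidean} applies and gives $w^e_{A'}(\phi(D)) \leq \alpha_0$.

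The remaining step is to pass from Euclidean to spherical width. Writing $s = r^e_{A'}(\phi(D))$ and $t = R^e_{A'}(\phi(D))$, the monotonicity of $u \mapsto 2\arctan u$ yields $r_{A'}(\phi(D)) = 2\arctan s$ and $R_{A'}(\phi(D)) = 2\arctan t$. Concavity of $\arctan$ on $[0,\infty)$ with $\arctan(0) = 0$ makes $u \mapsto \arctan(u)/u$ nonincreasing on $(0,\infty)$, so
\[
\frac{\arctan t}{\arctan s} \leq \frac{t}{s}
\]
whenever $0 < s \leq t$. Therefore $w_{A'}(\phi(D)) \leq w^e_{A'}(\phi(D)) \leq \alpha_0$, and since $\phi$ is a spherical isometry $w_A(D) = w_{A'}(\phi(D))$, so the conclusion holds with $\beta_0 = \alpha_0$.

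This is essentially a routine reduction and I do not anticipate a serious obstacle. The only mildly delicate point is that the Euclidean radii $\tan(r/2)$ and $\tan(R/2)$ can be arbitrarily large as $R \to \pi$ while the spherical width stays bounded; the concavity inequality above handles this cleanly by showing that Euclidean measurements from the fixed basepoint $0$ can only inflate the ratio relative to its spherical counterpart, so the Euclidean bound transfers without any loss of constant.
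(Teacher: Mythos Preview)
Your argument is correct and in fact yields the sharper constant $\beta_0=\alpha_0$. The paper takes a different route: it passes to the sub-annulus $B=A(0;r,R/2)\subset\D$, uses the crude bi-Lipschitz comparison $|z-w|\le\sigma(z,w)\le 2|z-w|$ on $\D$ to get $w_B(D)\le\log 2+w_B^e(D)$, and then applies Lemma~\ref{lemma:annulus_euclidean} to $B$, arriving at $\beta_0=\alpha_0+2\log 2$. Your approach instead exploits the exact formula $\sigma(0,z)=2\arctan|z|$ together with the monotonicity of $u\mapsto(\arctan u)/u$, which converts the spherical width directly into the Euclidean width with no loss. Both reductions are short; yours is cleaner and avoids the sub-annulus bookkeeping, while the paper's has the minor advantage of not relying on the explicit form of the stereographic distance.
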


\begin{proof}
Using an isometry of $\widehat \C$, we may assume that the annulus $A$ is centered at $0$. Let $A=A(0;r,R)$, where $0<r<R<\pi$, and $K,L,D$ be as in the statement. Suppose that $w_A(D)>\log 2$, otherwise there is nothing to show. Note that $w_A\geq w_A(D)>\log 2$, so $R>2r$.  Then $D$ must intersect the annulus $B=A(0;r,R/2)$. We have $R_A(D)\leq 2R_B(D)$ and $r_A(D)=r_B(D)$, so 
$$w_A(D)=\log \frac{R_A(D)}{r_A(D)} \leq \log 2+ w_B(D).$$
The disks $K,L$ intersect both boundary components of $B$. Note that $0<r<R/2<\pi/2$, so $B\subset \D$ and the Euclidean metric is comparable to the spherical metric in $B$. More specifically, we have
$$ |z-w|\leq \sigma(z,w)\leq 2|z-w|$$
for each $z,w\in \D$. By viewing $B$ as a Euclidean annulus, we have
\begin{align*}
R_B(D)=\sigma(0,R_B^e(D))\leq 2R_B^e(D)\quad \text{and}\quad r_B(D)=\sigma(0,r_B^e(D))\geq r_B^e(D).
\end{align*}
We conclude that $w_B(D)\leq \log 2+ w_B^e(D)$. Altogether, we have $w_A(D)\leq 2\log 2 +w_B^e(D)$. By Lemma \ref{lemma:annulus_euclidean}, $w_B^e(D)\leq \alpha_0$ and this completes the proof.
\end{proof}

\subsection{Schottky groups}\label{section:schottky}

Let $\mathcal K=\{K_i\}_{i\in I}$ be a collection of closed disks in $\widehat \C$ with disjoint interiors. The \textit{Schottky group} of $\mathcal K$ is the group of M\"obius and anti-M\"obius transformations generated by reflections in the disks of $\mathcal K$ and is denoted by $\Gamma(\mathcal K)$.  Let $\phi_i$ be the reflection in $K_i$, $i\in I$. Each element $\phi\in \Gamma(\mathcal K)$ that is not the identity map can be expressed uniquely as $\phi=\phi_{i_1}\circ \dots \circ \phi_{i_n}$, $n\in \N$, where $i_j\neq i_{j+1}$ for $j\in \{1,\dots,n-1\}$. This is called the \textit{reduced form} of $\phi$. We define the \textit{length} of $\phi$ to be $l(\phi)=n$. The length of the identity map is defined to be $0$. See \cite{BonkKleinerMerenkov:schottky}*{Section 3} or \cite{NtalampekosYounsi:rigidity}*{Section 7} for further details.

The next proposition strengthens the conclusion of Lemma \ref{lemma:annulus_spherical} and asserts that if two disks $K_1,K_2$ are large relative to a fixed annulus $A$, then not only each disk $D\subset \widehat \C\setminus (K_1\cup K_2)$ has small width relative to $A$, but actually the entire orbit of $D$ under the Schottky group of $\{K_1,K_2\}$ retains the same property.

\begin{proposition}\label{prop:reflect}
Let $A$ be an annulus in $\widehat \C$ and $K_1,K_2$ be a pair of disjoint closed disks in $\widehat \C$ that intersect both boundary components of $A$. Let $\Gamma$ be the Schottky group of $\{K_1,K_2\}$. For each closed disk $D\subset \widehat \C\setminus (K_1\cup K_2)$ and for each $\phi\in \Gamma$ we have
$$w_A(\phi(D))\leq \beta_0,$$
where $\beta_0$ is the universal constant appearing in Lemma \ref{lemma:annulus_spherical}
\end{proposition}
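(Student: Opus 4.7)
My plan is to induct on the length $n = l(\phi)$, applying Lemma~\ref{lemma:annulus_spherical} with a pair of disks adapted to $\phi$.

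For the base case $n = 0$, we have $\phi(D) = D \subset \widehat{\C} \setminus (K_1 \cup K_2)$. Applying Lemma~\ref{lemma:annulus_spherical} with $(K, L) = (K_1, K_2)$, and noting that the reflected disk $\phi_1(K_2)$ lies inside $K_1$, we obtain $D \subset \widehat{\C} \setminus (K_2 \cup \phi_1(K_2))$, so $w_A(D) \le \beta_0$ follows at once.

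For the inductive step $n \ge 1$, I would write $\phi = \phi_{i_1} \circ \cdots \circ \phi_{i_n}$ in reduced form, let $j$ be the element of $\{1, 2\} \setminus \{i_1\}$, and apply Lemma~\ref{lemma:annulus_spherical} with the pair $(K, L) = (\phi(K_{i_1}), \phi(K_j))$. These are disjoint closed disks in $\widehat{\C}$ because $\phi$ is a M\"obius or anti-M\"obius transformation mapping disks to disks. Using the conjugation identity $\phi_{\phi(K_{i_1})} = \phi \circ \phi_{i_1} \circ \phi^{-1}$, valid whether $\phi$ is M\"obius or anti-M\"obius (both sides are anti-M\"obius transformations that fix the circle $\phi(\partial K_{i_1})$ pointwise), the reflected disk is
$$L' = \phi_{\phi(K_{i_1})}(\phi(K_j)) = \phi(\phi_{i_1}(K_j)).$$
Since $\phi_{i_1}(K_j) \subset K_{i_1}$ and $D$ is outside $K_1 \cup K_2$, we have $D \subset \widehat{\C} \setminus (K_j \cup \phi_{i_1}(K_j))$; pushing this forward by $\phi$ yields $\phi(D) \subset \widehat{\C} \setminus (L \cup L')$, which is the containment required by the lemma.

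The main obstacle is verifying the spanning hypothesis of Lemma~\ref{lemma:annulus_spherical} for the adapted pair $(\phi(K_{i_1}), \phi(K_j))$: both disks must intersect both boundary components of $A$. This need not hold literally for every $\phi \in \Gamma$, because iterated reflections concentrate the orbit disks near the two-point limit set of the Schottky group. To handle this I would replace the literal spanning requirement with the weaker ``big-and-close'' hypothesis of Proposition~\ref{prop:annulus_euclidean_general}, after reducing to a Euclidean subannulus in the manner of the proof of Lemma~\ref{lemma:annulus_spherical}. This weaker property propagates through the Schottky orbit by a parallel induction: reflecting a big-and-close disk across the boundary of another big-and-close disk produces a disk of the same type, with constants controlled by the estimates carried out in the proof of Lemma~\ref{lemma:annulus_euclidean}. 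The technically delicate point is ensuring that these constants remain uniformly bounded along the entire orbit, independent of $l(\phi)$, which is where the alternating structure of the reduced form and the discreteness of the Schottky action must be exploited.
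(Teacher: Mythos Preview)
Your base case $n=0$ and the case $n=1$ are correct and coincide with the paper's opening steps. The gap is in the inductive step for $n\geq 2$.

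You correctly observe that the pair $(\phi(K_{i_1}),\phi(K_j))$ need not span the original annulus $A$, and you propose to fall back on the weaker ``big-and-close'' hypothesis of Proposition~\ref{prop:annulus_euclidean_general}. But that hypothesis does not propagate with uniform constants. In the normalized Euclidean picture $A=A^e(0;r,1)$, the absolute constants produced in the proof of Lemma~\ref{lemma:annulus_euclidean} depend on the fact that the reflecting disk $K$ genuinely spans the annulus, which forces its radius to satisfy $\rho\geq(1-r)/2$. If $K$ is merely $\alpha$-big-and-close, so that $\rho$ is only of order $\alpha^{-1}$, then reflection across $\partial K$ scales by roughly $\rho^2$ and one obtains $\diam_e(L')$ only of order $\alpha^{-2}$. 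Iterating, the diameter lower bound degenerates at least geometrically in $l(\phi)$, and the constant $C(\alpha)$ in Proposition~\ref{prop:annulus_euclidean_general} blows up. This is precisely the phenomenon you already noted---the orbit disks shrink to the two-point limit set---so no fixed $\alpha$ can work for all $\phi$, and the ``parallel induction'' you sketch cannot close.

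The paper circumvents this by shrinking the annulus rather than tracking constants. For $\phi=\phi_{i_1}\circ\cdots\circ\phi_{i_n}$ with $n\geq 2$, set $\psi=\phi_{i_1}\circ\cdots\circ\phi_{i_{n-1}}$ and let $A(\phi)$ be the smallest concentric subannulus of $A$ containing $A\cap\psi(K_{i_n})$. Since $\phi_{i_n}(D)\subset K_{i_n}$ we have $\phi(D)=\psi(\phi_{i_n}(D))\subset\psi(K_{i_n})$, hence $w_A(\phi(D))=w_{A(\phi)}(\phi(D))$. The paper then proves by a short induction on $n$, using the nesting $A(\phi)\subset A(\psi)$ and the alternating structure of the reduced word, that $\psi(K_1)$ and $\psi(K_2)$ both \emph{span} $A(\phi)$---meet both of its boundary circles. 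With genuine spanning relative to $A(\phi)$, Lemma~\ref{lemma:annulus_spherical} applies directly with the universal constant $\beta_0$, and no degeneration occurs.
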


\begin{proof}
Let $\Omega=\widehat \C\setminus (K_1\cup K_2)$. Let $\phi_i$ be the reflection across $\partial K_i$, $i=1,2$. By Lemma \ref{lemma:annulus_spherical} we have $w_A(D)\leq \beta_0$ for each closed disk $D\subset \widehat \C\setminus (K_2\cup \phi_1(K_2))$. In particular, since $\phi_1(K_2)\subset K_1$, this is true for each $D\subset \Omega$, so we have verified the statement when $\phi$ is the identity map. Next, for each $D\subset \Omega$ we have $\phi_1(D)\subset \phi_1(\Omega)= \inter(K_1) \setminus \phi_1(K_2)\subset \widehat \C\setminus (K_2\cup \phi_1(K_2))$. Thus, by the above, $w_A(\phi_1(D))\leq \beta_0$. With the same argument, one obtains $w_A(\phi_2(D))\leq \beta_0$ for each $D\subset \Omega$ and we have verified the statement when $l(\phi)=1$. 

Let $\phi\in \Gamma$ with $l(\phi)=n\geq 2$ be represented in reduced form as $\phi=\phi_{i_1}\circ \dots \circ \phi_{i_n}$, where $i_j\in \{1,2\}$ and $i_j\neq i_{j+1}$ for $j\in \{1,\dots,n-1\}$. Note that $\phi(\Omega)$ has two complementary disks, $\phi(K_1)$ and $\phi(K_2)$, and 
$$\phi(\Omega)\subset \phi_{i_1}\circ \dots \circ \phi_{i_{n-1}} ( K_{i_n})$$
because $\phi_{i_n}(\Omega)\subset K_{i_n}$. If $w_A(\phi_{i_1}\circ \dots \circ \phi_{i_{n-1}} ( K_{i_n})) =0$, then we also have $w_A(\phi(D))=0$ for all $D\subset \Omega$. In this case we define $A(\phi)=\emptyset$. Suppose that $w_A(\phi_{i_1}\circ \dots \circ \phi_{i_{n-1}} ( K_{i_n}))>0$. Then there exists an annulus $A(\phi)$ concentric with $A$ such that 
$$A\cap \phi_{i_1}\circ \dots \circ \phi_{i_{n-1}} ( K_{i_n})\subset A(\phi)\subset A\,\,\,\text{and}\,\,\, w_{A(\phi)}= w_A(\phi_{i_1}\circ \dots \circ \phi_{i_{n-1}} ( K_{i_n})).$$
The disks $\phi_{i_1}\circ \dots \circ \phi_{i_{n-1}} ( K_{1})$ and $\phi_{i_1}\circ \dots \circ \phi_{i_{n-1}} ( K_2)$ are disjoint. We show by induction on the length of $\phi$ that each of these two disks intersects both boundary components of $A(\phi)$. 

For $n=2$, let $\phi=\phi_{i_1}\circ \phi_{i_2}$ and consider the disks $\phi_{i_1}(K_1)$ and $\phi_{i_1}(K_2)$. Without loss of generality, $i_2=2$, so $i_1=1$ and we wish to show that $\phi_1(K_1)$ and $\phi_1(K_2)$ intersect both boundary components of $A(\phi)$. By the definition of $A(\phi)$ this is immediate for the disk $\phi_1(K_2)$. Note that $\phi_1(K_1)=\widehat \C\setminus \inter(K_1)\supset K_2$. By assumption, $K_2$ intersects each of the boundary components of the initial annulus $A$, so the same is true for the subannulus $A(\phi)$. Thus, $\phi_1(K_1)$ has the same property, as desired.

Now, suppose that the claim is true whenever $l(\phi)=n$ for some $n\geq 2$. Let $\phi=\phi_{i_1}\circ \dots\circ \phi_{i_n}\circ \phi_{i_{n+1}}$ such that $A(\phi)\neq \emptyset$. Without loss of generality, $i_{n+1}=1$. Let $\psi=\phi_{i_1}\circ \dots\circ \phi_{i_n}$. Then the disk $\psi(K_1)$ intersects both boundary components of $A(\phi)$ by definition. We have $\psi(K_2)=\phi_{i_1}\circ \dots \circ \phi_{i_{n-1}}( \widehat \C\setminus \inter(K_2))$. By the induction assumption, the disjoint disks $\phi_{i_1}\circ \dots \circ \phi_{i_{n-1}}(K_1)$ and $\phi_{i_1}\circ \dots \circ \phi_{i_{n-1}}(K_2)$ intersect both boundary components of $A(\psi)$. Thus, $\phi_{i_1}\circ \dots \circ \phi_{i_{n-1}}(K_2)$ does not contain any of the boundary components of $A(\psi)$ and $\phi_{i_1}\circ \dots \circ \phi_{i_{n-1}}(\partial K_2)$ intersects both boundary components of $A(\psi)$. Hence, $\psi(K_2)$ intersects both boundary components of $A(\psi)$. Observe that $A(\psi)\supset A(\phi)$, since $\phi_{i_1}\circ \dots \circ \phi_{i_{n-1}}(K_2)\supset \phi_{i_1}\circ \dots\circ \phi_{i_{n-1}}\circ  \phi_{i_n}(K_1)$.
Thus, $\psi(K_2)$ intersects both boundary components of $A(\phi)$, as desired. We have established the inductive claim.

Now, let $\phi\in \Gamma$ with $l(\phi)=n \geq 2$ and $\phi=\phi_{i_1}\circ \dots \circ \phi_{i_n}$. Also, let $\psi=\phi_{i_1}\circ \dots \circ \phi_{i_{n-1}}$. Without loss of generality, $i_n=2$. The disks $\psi ( K_{1})$ and $\psi (K_2)$ are disjoint and intersect both boundary components of $A(\phi)$, by the above claim. The reflection in $\psi ( K_2)$ is given by  $\psi\circ \phi_2 \circ \psi^{-1}$. By Lemma \ref{lemma:annulus_spherical}, each closed disk 
$$D\subset \widehat \C\setminus (\psi ( K_{1}) \cup  \psi\circ \phi_2(K_1))=\widehat \C\setminus (\psi ( K_{1}) \cup  \phi(K_1)) $$
satisfies $w_{A(\phi)}(D)\leq \beta_0$. If $D\subset \Omega$, then 
$$\phi(D)\subset \widehat \C\setminus (\phi(K_1)\cup \phi(K_2)) \subset \widehat \C\setminus (\phi(K_1)\cup \psi(K_1)),$$
given that $\psi(K_1) \subset \psi(\widehat \C\setminus \inter(K_2))=\phi(K_2)$. Therefore, $w_{A(\phi)}(\phi(D))\leq \beta_0$. Moreover, since $\phi(D)=\psi\circ \phi_2(D)\subset \psi(K_2)$, by the choice of the annulus $A(\phi)$, we have $w_{A(\phi)} (\phi(D))= w_A(\phi(D))$; recall that $A(\phi)$ is the smallest subannulus of $A$ that contains $A\cap \psi(K_2)$. This completes the proof.
\end{proof}

We include some further properties of Schottky groups in this section that will be used in the proof of the main theorem. Suppose that we are given a bijection from a collection of disks $\mathcal K=\{K_i\}_{i\in I}$ with disjoint interiors to another such collection $\mathcal K'=\{K_i'\}_{i\in I}$ so that $K_i$ corresponds to $K_i'$ for each $i\in I$. If $\phi=\phi_{i_1}\circ \dots \circ \phi_{i_n}\in \Gamma(\mathcal K)$ is expressed in reduced form, then we define $\phi'=\phi_{i_1}'\circ \dots \circ \phi_{i_n}'\in \Gamma(\mathcal K')$.

\begin{lemma}\label{lemma:schottky:limit}
Let $\mathcal K=\{K_i\}_{i\in I}$ be a finite collection of disjoint closed disks in $\widehat \C$ and let $S=\widehat \C\setminus \bigcup_{i\in I}\inter(K_i)$. 
\begin{enumerate}[label=\normalfont(\roman*)]
\item\label{lemma:schottky:limit:1} The set $S_\infty= \bigcup_{\phi\in \Gamma(\mathcal K)} \phi(S)$ is a domain and $\widehat \C\setminus S_\infty$ is a totally disconnected compact set. If $\# I=2$, then $\widehat \C\setminus S_\infty$ consists of two points.
\end{enumerate}
Let $\mathcal K'=\{K_i'\}_{i\in I}$ be another collection of disjoint closed disks in $\widehat \C$ and let $f\colon \widehat \C\to \widehat \C$ be a homeomorphism such that $K_i'=f(K_i)$ for each $i\in I$. 
\begin{enumerate}[label=\normalfont(\roman*)]\setcounter{enumi}{1}
\item\label{lemma:schottky:limit:2}  There exists an extension of $f|_S$ to a homeomorphism $F \colon \widehat \C \to \widehat \C$ such that $F\circ \phi = \phi'\circ F$ for each $\phi\in \Gamma(\mathcal K)$.
\end{enumerate}
Let $\Omega\subset \widehat \C$ be a finitely connected domain such that the components of $\widehat \C\setminus \Omega$ are the disks $K_i$, $i\in I$, together with a finite collection of disjoint compact sets $\{L_j\}_{j\in J}$. 
\begin{enumerate}[label=\normalfont(\roman*)]\setcounter{enumi}{2}
\item\label{lemma:schottky:limit:3}  The set  
$$G=S_\infty \setminus \bigcup\{\phi(L_j): j\in J,\,\, \phi\in \Gamma(\mathcal K)\}$$
is a domain.
\item\label{lemma:schottky:limit:4} If $f$ is $H$-quasiconformal in $\Omega$ for some $H\geq 1$, then the extension $F$ is $H$-quasiconformal in $G$.
\end{enumerate}
\end{lemma}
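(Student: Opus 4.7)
The plan rests on the tiling of the sphere by translates $\phi(S)$ of the fundamental domain $S$, together with the shrinking-orbit-disks structure of a Schottky group generated by reflections in disjoint circles.

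For \ref{lemma:schottky:limit:1}, the set $S$ is connected, being the complement of finitely many disjoint open disks in $\widehat\C$. Each tile $\phi(S)$ is then connected, and consecutive tiles $\phi(S)$ and $(\phi\circ\phi_i)(S)$ overlap on the full circle $\phi(\partial K_i)$; hence $S_\infty$ is a connected open set, i.e.\ a domain. For the complement, I would invoke the classical fact about Schottky groups: for any infinite reduced word $(i_1,i_2,\dots)$ (with $i_j\neq i_{j+1}$), the nested orbit disks $\phi_{i_1}\circ\cdots\circ\phi_{i_{n-1}}(K_{i_n})$ have diameters tending to $0$, so they intersect in a single point, and $\widehat\C\setminus S_\infty$ is precisely the totally disconnected set of such points. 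When $\#I=2$ every admissible reduced word must alternate between $1$ and $2$, so the orientation-preserving subgroup is infinite cyclic, generated by the M\"obius transformation $\phi_1\circ\phi_2$. This transformation is loxodromic precisely because $K_1\cap K_2=\emptyset$; its two fixed points are the only accumulation points, giving $|\widehat\C\setminus S_\infty|=2$.

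For \ref{lemma:schottky:limit:2}, I define $F(z)=\phi'\circ f\circ \phi^{-1}(z)$ for $z\in\phi(S)$. Compatibility on overlaps $\phi(S)\cap(\phi\circ\phi_i)(S)=\phi(\partial K_i)$ follows from the two key pointwise identities $\phi_i|_{\partial K_i}=\mathrm{id}$ and $\phi'_i|_{\partial K_i'}=\mathrm{id}$, where $\partial K_i'=f(\partial K_i)$ by hypothesis; the equivariance $F\circ\phi=\phi'\circ F$ is then immediate. To extend continuously to $\widehat\C\setminus S_\infty$, I use the coding from \ref{lemma:schottky:limit:1}: a limit point $p$ encoded by the word $(i_1,i_2,\dots)$ is sent to the intersection of the corresponding nested disks $\phi'_{i_1}\circ\cdots\circ\phi'_{i_{n-1}}(K'_{i_n})$, which also shrink to a single point by the same Schottky argument applied to $\mathcal K'$. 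Since both sequences of nested disks have diameters going to $0$, this extension is continuous, and applying the symmetric construction with $f^{-1}$ produces the inverse, giving a homeomorphism $F$ of $\widehat\C$.

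Part \ref{lemma:schottky:limit:3} uses the same tiling idea: $\Omega$ is connected by assumption, each $\phi(\Omega)$ is connected, and adjacent tiles $\phi(\Omega)$ and $(\phi\circ\phi_i)(\Omega)$ are joined along the circle $\phi(\partial K_i)$, which lies in $G$ because the $L_j$ are disjoint from all $K_i$ (so $\partial K_i$ avoids every $L_j$, and their reflections avoid every $\psi(L_j)$). Hence $G$ is connected. Openness follows from closedness of the removed set $(\widehat\C\setminus S_\infty)\cup\bigcup_\phi\bigcup_j\phi(L_j)$: each $\phi(L_j)$ is contained in a nested orbit disk, so the translates $\psi(L_j)$ accumulate only on the totally disconnected limit set, which is itself closed.

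For \ref{lemma:schottky:limit:4}, on each tile $\phi(\Omega)$ the map $F=\phi'\circ f\circ \phi^{-1}$ is $H$-quasiconformal: $\phi$ and $\phi'$ are M\"obius or anti-M\"obius, hence $1$-quasiconformal, and Lemma \ref{lemma:conformal_composition} together with Remark \ref{remark:conformal_composition} gives the bound. To upgrade this to $H$-quasiconformality on all of $G$, I note that any compact $K\subset G$ is disjoint from $\widehat\C\setminus S_\infty$, so by the shrinking-disks property $K$ meets only finitely many of the circles $\phi(\partial K_i)$. Thus on a small ball $B\Subset G$, $F$ is continuous and $H$-quasiconformal off finitely many analytic arcs, which form a set that is removable for quasiconformal maps; this yields $F\in W^{1,2}_{\loc}(G)$ with $\|DF\|^2\leq H\, J_F$ almost everywhere. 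The main technical point I expect to be delicate is the extension to the limit set in \ref{lemma:schottky:limit:2}, i.e.\ verifying that the symbolic coding by infinite reduced words gives a homeomorphism between the limit sets of $\Gamma(\mathcal K)$ and $\Gamma(\mathcal K')$, which ultimately rests on a uniform spherical-diameter contraction estimate for the reflections; this is the content of convex cocompactness of Schottky reflection groups and can be cited from the references.
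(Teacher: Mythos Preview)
Your approach is essentially the same as the paper's: both rest on the tiling of $\widehat\C$ by the translates $\phi(S)$, the coding of the limit set by infinite reduced words (which the paper simply cites from \cite{NtalampekosYounsi:rigidity} and \cite{BonkKleinerMerenkov:schottky}), and the quasiconformal removability of the circles $\phi(\partial K_i)$ to glue the tile-wise estimate in \ref{lemma:schottky:limit:4}. Two minor points of exposition: in \ref{lemma:schottky:limit:1} your argument establishes connectedness of $S_\infty$ but not yet openness---that only follows once you have identified the complement with the (closed) limit set, so the order should be reversed; and in \ref{lemma:schottky:limit:4} the appeal to Lemma~\ref{lemma:conformal_composition} is inapposite (that lemma concerns $1$-quasiconformality on a measurable set), whereas what you need is the elementary fact that pre- and post-composition with (anti-)M\"obius maps preserves $H$-quasiconformality.
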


\begin{figure}
	\includegraphics[scale=0.3]{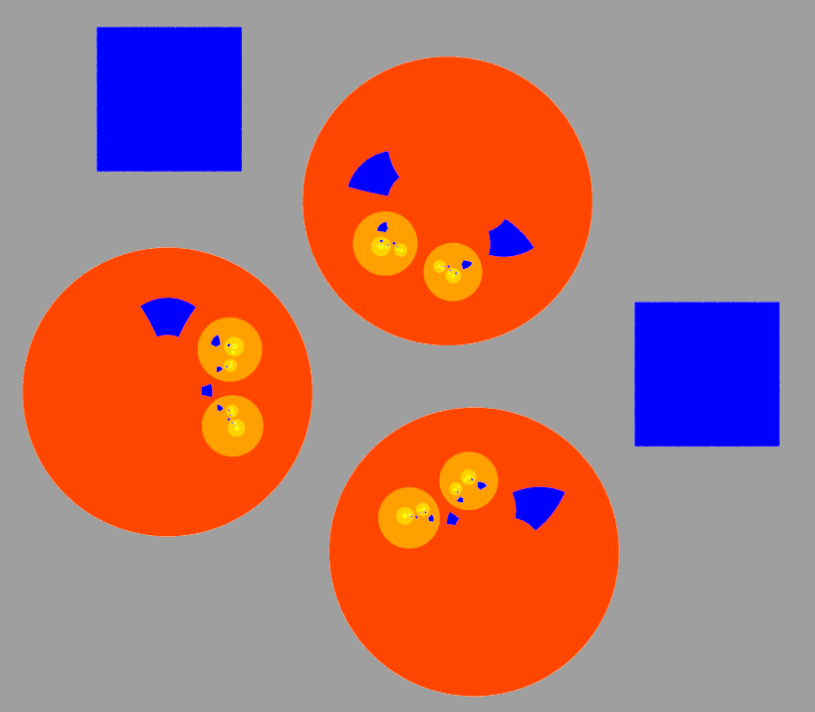}
	\caption{A finitely connected domain $\Omega$ whose complementary components are the three largest disks and the two squares. The set $\widehat \C\setminus S_\infty$ appearing in Lemma \ref{lemma:schottky:limit} \ref{lemma:schottky:limit:1} is a Cantor set. The set $\widehat \C\setminus G$ in Lemma \ref{lemma:schottky:limit} \ref{lemma:schottky:limit:3} is the union of this Cantor set with the squares and their reflections.}\label{figure:schottky:limit}
\end{figure}

The various sets appearing in the lemma are illustrated in Figure \ref{figure:schottky:limit}.

\begin{proof}
Let $\phi_i$ be the reflection in $K_i$, $i\in I$. It is shown in \cite{NtalampekosYounsi:rigidity}*{Corollary 7.4} that the set $\widehat \C\setminus S_\infty$ is equal to the intersection over $n\in \N$ of the disjointed unions
\begin{align*}
\bigsqcup_{i_j\neq i_{j+1}} \phi_{i_1}\circ \dots \circ \phi_{i_n}(K_{i_{n+1}}).
\end{align*}
Given that $I$ is finite, each such union is compact. Therefore, $\widehat \C\setminus S_\infty$ is compact. Also, again from \cite{NtalampekosYounsi:rigidity}*{Corollary 7.4}, for each point $z\in \widehat \C\setminus S_\infty$ there exists a unique sequence $\{i_j\}_{j\in \N}$ such that $i_j\in I$ and $i_j\neq i_{j+1}$ for each $j\in \N$, and we have
$$\{z\}=\bigcap_{j=1}^\infty \phi_{i_1}\circ \dots \circ \phi_{i_j}(K_{i_{j+1}}).$$
This implies that $\widehat \C\setminus S_\infty$ is totally disconnected. Hence, its complement is a domain; see \cite{Wilder:topology}*{Lemma II.5.20}. If $\#I=2$, then there are only two sequences $\{i_j\}_{j\in \N}$ as above, so $\widehat \C\setminus S_\infty$ has only two elements. This proves \ref{lemma:schottky:limit:1}. Part \ref{lemma:schottky:limit:2} is proved in \cite{NtalampekosYounsi:rigidity}*{Lemma 7.5}; see also \cite{BonkKleinerMerenkov:schottky}*{Lemma 5.1}.

Now we establish part \ref{lemma:schottky:limit:3}. First observe that the open sets $\phi(\inter(S))$, $\phi\in \Gamma(\mathcal K)$, are disjoint. This implies that the compact sets $\phi(L_j)$, $j\in J$, $\phi\in \Gamma(\mathcal K)$, are disjoint. By the argument used in \ref{lemma:schottky:limit:1}, observe that for each $j\in J$ the set $\phi(L_j)$ converges as $l(\phi)\to \infty$ to the boundary of the open set $S_\infty$, i.e., to $\widehat \C\setminus S_\infty$. This implies that the set $\bigcup\{\phi(L_j): j\in J,\,\, \phi\in \Gamma(\mathcal K)\}$ is relatively closed in $S_\infty$. Thus, $G$ is open. Also, any two points of $G$ can be connected by a path in the domain $S_\infty$. This path intersects at most finitely many sets $\phi(L_j)$, where $j\in J$, $\phi\in \Gamma(\mathcal K)$, thus, it can be modified to a path that avoids each of these sets and stays in $G$. We conclude that $G$ is connected. This proves \ref{lemma:schottky:limit:3}.

Before proving \ref{lemma:schottky:limit:4}, we show that $G$ agrees with the set
\begin{align}\label{lemma:schottky:limit:g}
\widetilde G= \bigcup \{\phi(\Omega)\cup \phi(\partial K_i): i\in I,\,\, \phi\in \Gamma(\mathcal K)\}.
\end{align}
To prove this, observe that
$$\Omega\cup \bigcup_{i\in I}\partial K_i= S\setminus \bigcup_{j\in J}L_j= S \setminus \bigcup \{\psi(L_j): j\in J,\,\, \psi\in \Gamma(\mathcal K)\},$$
where the last equality follows from the fact that $\psi(L_j)\cap S=\emptyset$ if $j\in J$ and $\psi\in \Gamma(\mathcal K)$ is not the identity map. Therefore, for $\phi\in \Gamma(\mathcal K)$ we have
\begin{align*}
\phi \left( \Omega\cup \bigcup_{i\in I}\partial K_i\right)&= \phi(S) \setminus \bigcup \{\phi\circ \psi(L_j): j\in J,\,\, \psi\in \Gamma(\mathcal K)\}\\
&= \phi(S) \setminus \bigcup \{\psi(L_j): j\in J,\,\, \psi\in \Gamma(\mathcal K)\}.
\end{align*}
Taking the union over $\phi\in \Gamma(\mathcal K)$, we have
\begin{align*}
\widetilde G= \bigcup \{ \phi(\Omega\cup \bigcup_{i\in I}\partial K_i): \phi\in \Gamma(\mathcal K)\}= S_\infty \setminus \bigcup \{\psi(L_j): j\in J,\,\, \psi\in \Gamma(\mathcal K)\}=G.  
\end{align*}

Finally, we show the quasiconformality of $F$ on $G$. For each $\phi\in \Gamma(\mathcal K)$, by \ref{lemma:schottky:limit:2}, for $z\in \phi(\Omega)$ we have
$$F(z)= \phi'\circ F\circ \phi^{-1}(z).$$
Since $\phi$ and $\phi'$ are anti-M\"obius transformations and $F$ is $H$-quasiconformal in $\Omega$, we conclude that $F$ is $H$-quasiconformal in $\phi(\Omega)$ for each $\phi\in \Gamma(\mathcal K)$. 

For each $i\in I$ and $\phi\in \Gamma(\mathcal K)$ the circle $\phi(\partial K_i)$ is the common boundary component of the domains $\phi(\Omega)$ and $\phi(\phi_i(\Omega))$. Since $\Omega$ is finitely connected, $\phi(\partial K_i)$ has a positive distance from other boundary components of $\phi(\Omega)$ and $\phi(\phi_i(\Omega))$. Thus, in view of \eqref{lemma:schottky:limit:g}, $U=\phi(\Omega)\cup \phi(\phi_i(\Omega))\cup \phi (\partial K_i)$ is an open subset of $G$ such that $F$ is $H$-quasiconformal in $U\setminus \phi(\partial K_i)$. By the quasiconformal removability of circles (see \cite{LehtoVirtanen:quasiconformal}*{Theorem I.8.3}), $F$ is $H$-quasiconformal in $U$. Since $i\in I$ and $\phi\in \Gamma(\mathcal K)$ were arbitrary, $F$ is $H$-quasiconformal in $G$. This completes the proof.
\end{proof}

\section{Transboundary modulus estimates}

\subsection{Classical and transboundary modulus}\label{section:transboundary}

Let $\Gamma$ be a family of curves in $\widehat \C$. A Borel function $\rho\colon \widehat \C\to [0,\infty]$ is \textit{admissible} for $\Gamma$ if $\int_\gamma \rho\, ds\geq 1$ for every locally rectifiable path $\gamma\in \Gamma$. The \textit{(conformal) modulus} of $\Gamma$ is defined as
$$\Mod \Gamma=\inf_\rho \int\rho^2\, d\Sigma,$$
where the infimum is taken over all admissible functions $\rho$ for $\Gamma$. If there exists no admissible function for $\Gamma$, then by convention $\Mod \Gamma=\infty$. 

Let $\Omega\subset \widehat \C$ be a domain and $\mathcal K=\{K_i\}_{i\in I}$ be a collection of disjoint compact subsets of $\Omega$ such that the set $K=\bigcup_{i\in I}K_i$ is relatively closed in $\Omega$.   Let $\gamma\colon [a,b]\to \widehat \C$ be a curve. The \textit{trace} of $\gamma$ is the set $\gamma([a,b])$ and is denoted by $|\gamma|$. The set $\gamma^{-1}(\Omega\setminus K)$ is relatively open in $[a,b]$, so it is equal to the union of countably many intervals $A_j$, $j\in J$, that are relatively open in $[a,b]$. We say that $\gamma$ is \textit{locally rectifiable} in $\Omega\setminus K$ if $\gamma|_{A_j}$ is locally rectifiable for each $j\in J$. In this case, if $\rho\colon \Omega\setminus K\to[0,\infty]$ is a Borel function, we define
$$\int_{\gamma} \rho\chi_{\Omega\setminus K}\, ds= \sum_{j\in J} \int_{\gamma|_{A_j}}\rho\, ds.$$

A \textit{transboundary mass distribution} $\rho$ in $\Omega$ consists of a Borel function $\rho\colon \Omega\setminus K\to [0,\infty]$ and a collection of weights $\rho(K_i)\geq 0$, $i\in I$. The \textit{mass} of $\rho$ is defined as
$$\int_{\Omega\setminus K} \rho^2\, d\Sigma+ \sum_{i\in I}\rho(K_i)^2.$$
The transboundary mass distribution $\rho$ is \textit{admissible} for a curve family $\Gamma$ in $\widehat \C$ if 
$$\int_{\gamma} \rho\chi_{\Omega\setminus K}\, ds+ \sum_{i: |\gamma|\cap K_i\neq \emptyset} \rho(K_i)\geq 1$$
for every $\gamma\in \Gamma$ that is locally rectifiable in $\Omega\setminus K$. The \textit{transboundary modulus} of $\Gamma$ with respect to $\Omega$ and $\mathcal K$ is defined as
$$\Mod_{\Omega,\mathcal K} \Gamma= \inf_{\rho} \left\{ \int_{\Omega\setminus K} \rho^2\, d\Sigma + \sum_{i\in I}\rho(K_i)^2\right\},$$
where the infimum is taken over all transboundary mass distributions in $\Omega$ that are admissible for $\Gamma$. If there is no admissible transboundary mass distribution, we define $\Mod_{\Omega,\mathcal K} \Gamma=\infty$. 

Transboundary modulus is conformally invariant, as was observed by Schramm \cite{Schramm:transboundary}*{Lemma 1.1}. The next statement is a generalization of \cite{Bonk:uniformization}*{Lemma 6.3} and can be found in \cite{Rehmert:thesis}*{Lemma 3.1.3}.

\begin{lemma}[Invariance of transboundary modulus]\label{lemma:invariance}
Let $\Omega,\Omega'\subset \widehat \C$ be domains and let $\mathcal K=\{K_i\}_{i\in I}$ be a countable collection of disjoint compact subsets of $\Omega$ such that $\bigcup_{i\in I}K_i$ is relatively closed in $\Omega$. Suppose that $f\colon \Omega\to \Omega'$ is a homeomorphism that is $H$-quasiconformal on $\Omega\setminus \bigcup_{i\in I}K_i$ for some $H\geq 1$ and let $\mathcal K'=\{f(K_i)\}_{i\in I}$. Then for every family of curves $\Gamma$ in $\Omega$ and for  $f(\Gamma)=\{f\circ \gamma: \gamma\in \Gamma\}$ we have
$$H^{-1}\cdot\Mod_{\Omega,\mathcal K}\Gamma\leq  \Mod_{\Omega',\mathcal K'}f(\Gamma)\leq H \cdot \Mod_{\Omega,\mathcal K}\Gamma.$$
\end{lemma}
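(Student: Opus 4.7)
The plan is to prove the upper bound; the lower bound follows by applying the same argument to $f^{-1}$, which is $H$-quasiconformal on $\Omega' \setminus K'$ where $K' = \bigcup_{i \in I} f(K_i)$. Given an admissible transboundary mass distribution $\rho$ for $\Gamma$ in $\Omega$ with mass close to $\Mod_{\Omega,\mathcal{K}} \Gamma$, I would push it forward via $f$ by setting
\[
\rho'(w) = \rho(f^{-1}(w))\, \|Df^{-1}(w)\|_\sigma \text{ on } \Omega'\setminus K', \qquad \rho'(f(K_i)) = \rho(K_i),
\]
where $\|\cdot\|_\sigma$ denotes the operator norm of the derivative with respect to the spherical metric. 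This is the standard quasiconformal pullback: the continuous density is adjusted so that line integrals transform correctly, while the discrete weights are preserved.

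To verify admissibility of $\rho'$ for $f(\Gamma)$, fix $\gamma' = f\circ\gamma$ locally rectifiable in $\Omega' \setminus K'$. Since $f$ is a homeomorphism, $(\gamma')^{-1}(\Omega'\setminus K') = \gamma^{-1}(\Omega\setminus K)$ and the set of indices $i$ with $|\gamma'|\cap f(K_i) \neq \emptyset$ matches that for $\gamma$, so the discrete contributions transfer identically. By Fuglede's lemma applied to the $H$-quasiconformal map $f^{-1}$ on $\Omega'\setminus K'$, outside a subfamily $\Gamma_0$ of classical modulus zero in $\Omega' \setminus K'$ the map $f^{-1}$ is absolutely continuous along each component $\gamma'|_{A_j}$ of $\gamma'$ in $\Omega' \setminus K'$; on each such piece the tangential-derivative bound $|(f^{-1} \circ \gamma')'(t)| \le \|Df^{-1}(\gamma'(t))\|_\sigma$ together with a one-variable change of variables yields $\int_{\gamma|_{A_j}} \rho\, ds \le \int_{\gamma'|_{A_j}} \rho'\, ds$. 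Summing over $j$ and adding the discrete contributions transfers admissibility of $\rho$ for $\gamma$ to admissibility of $\rho'$ for $\gamma'$. The exceptional family $\Gamma_0$ is absorbed by perturbation: since $\Mod \Gamma_0 = 0$ in $\Omega' \setminus K'$, for each $\delta > 0$ there is $h \ge 0$ with $\int h^2\, d\Sigma < \delta$ admissible for $\Gamma_0$, and by Cauchy--Schwarz the transboundary mass of $\rho' + h$ exceeds that of $\rho'$ by $o(1)$ as $\delta \to 0$.

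For the mass bound, the discrete contributions are preserved exactly, and for the continuous part the change of variables $w = f(z)$ (valid since $f$ is quasiconformal on $\Omega \setminus K$) gives
\[
\int_{\Omega'\setminus K'} (\rho')^2\, d\Sigma = \int_{\Omega\setminus K} \rho(z)^2 \,\|Df^{-1}(f(z))\|_\sigma^2\, J_{\sigma,f}(z)\, d\Sigma(z).
\]
The two-dimensional identity $\|M^{-1}\| = \|M\|/|\det M|$ for an invertible $2\times 2$ matrix $M$ reduces the integrand factor to $\|Df(z)\|_\sigma^2 / J_{\sigma,f}(z)$, which is at most $H$ by the $H$-quasiconformality of $f$ on $\Omega \setminus K$. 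Therefore the total mass of $\rho'$ is at most $H$ times that of $\rho$, and taking the infimum yields the desired upper bound. The main subtlety is the Fuglede exceptional family: transboundary modulus is not obviously dominated by classical modulus because of the discrete weights, so the $\delta$-perturbation step above (rather than a direct comparison) is what makes the argument go through. Apart from this, everything is a direct transboundary adaptation of the classical proof of quasi-invariance of conformal modulus.
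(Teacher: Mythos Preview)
Your argument is correct and is precisely the standard proof; the paper does not supply its own argument but refers to Bonk's Lemma 6.3 and Rehmert's Lemma 3.1.3, both of which proceed exactly as you do (push forward the continuous density by $\rho'(w)=\rho(f^{-1}(w))\|Df^{-1}(w)\|$, keep the discrete weights, and absorb the Fuglede exceptional family by an $L^2$-small perturbation). One minor wording point: the exceptional family $\Gamma_0$ is not literally a family of curves in $\Omega'\setminus K'$ but rather the subfamily of $f(\Gamma)$ whose restriction to $\Omega'\setminus K'$ contains a bad piece; your perturbation handles this correctly since $h$ has infinite line integral over each bad piece and hence over the full curve.
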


\subsection{Fat sets}
Let $\tau>0$. A measurable set $K\subset \widehat \C$ is \textit{$\tau$-fat} if for each $x\in K$ and for each ball $B(x,r)$ that does not contain $K$ we have $\Sigma(B(x,r)\cap K)\geq \tau r^2$. A set is \textit{fat} if it is $\tau$-fat for some $\tau>0$. Note that points are automatically fat for each $\tau>0$. A more modern terminology for fatness is Ahlfors $2$-regularity, but we prefer to stick to the original terminology that was used by Schramm \cite{Schramm:transboundary} and Bonk \cite{Bonk:uniformization}. In fact, Schramm defines fatness using the Euclidean metric, but a set is fat in the Euclidean metric if and only if it is fat in the spherical metric, quantitatively; see the proof of \cite{Ntalampekos:uniformization_packing}*{Lemma 2.3}.

\begin{lemma}\label{lemma:fat_count}
Let $\tau>0$ and $\mathcal K=\{K_i\}_{i\in I}$ be a collection of disjoint $\tau$-fat continua in $\widehat \C$. Then for each set $A\subset \widehat \C$ and $t>0$ the set
$$\{i\in I: K_i\cap A\neq \emptyset \,\,\, \text{and}\,\,\, \diam (K_i)\geq t\diam(A)\}$$
has at most $C(\tau,t)$ elements. 
\end{lemma}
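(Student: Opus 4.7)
The plan is to use fatness to assign each qualifying $K_i$ a definite amount of spherical area near the set $A$, then exploit disjointness to bound how many such regions can coexist inside a controlled ball.

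Set $I' = \{i\in I: K_i\cap A\neq \emptyset,\ \diam(K_i)\geq t\diam(A)\}$, and assume $I'\neq \emptyset$ (otherwise there is nothing to prove). Since $\diam(K_i)\leq \diam(\widehat \C)=\pi$, the existence of some $i\in I'$ forces $t\diam(A)\leq \pi$, so we may set $r=t\diam(A)/3\leq \pi/3$ from the start. Pick any $x_0\in A$, and for each $i\in I'$ pick $y_i\in K_i\cap A$. Because $\diam(B(y_i,r))\leq 2r<3r=t\diam(A)\leq \diam(K_i)$, the ball $B(y_i,r)$ cannot contain $K_i$, and hence $\tau$-fatness yields
\begin{equation*}
\Sigma(B(y_i,r)\cap K_i)\geq \tau r^2.
\end{equation*}

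Now the sets $B(y_i,r)\cap K_i$, $i\in I'$, are pairwise disjoint (since the $K_i$ are disjoint) and are all contained in the ball $B(x_0, r+\diam(A))$, as $\sigma(x_0,y_i)\leq \diam(A)$. Using the explicit computation $\Sigma(B(x_0,R))=2\pi(1-\cos R)\leq \pi R^2$ on the Riemann sphere (equipped with the spherical metric $\sigma$), summing the fatness bounds gives
\begin{equation*}
\#I' \cdot \tau r^2 \;\leq\; \Sigma\bigl(B(x_0,r+\diam(A))\bigr) \;\leq\; \pi(r+\diam(A))^2.
\end{equation*}
Dividing through and substituting $r=t\diam(A)/3$, the factors of $\diam(A)$ cancel and we obtain
\begin{equation*}
\#I' \;\leq\; \frac{\pi}{\tau}\left(1+\frac{3}{t}\right)^2 \;=\; C(\tau,t),
\end{equation*}
which is the desired bound.

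There is no real obstacle here; the only mild subtlety is keeping track of the constraint $t\diam(A)\leq \pi$ so that the ball $B(y_i,r)$ is genuinely a proper subset of $\widehat \C$ (and so that diameters can be computed as $2r$ rather than capped at $\pi$). Everything else is a direct packing argument from the definition of $\tau$-fatness and the quadratic upper bound on spherical area of balls.
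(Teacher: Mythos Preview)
Your argument is correct and is precisely the standard packing argument the paper has in mind (the paper does not prove the lemma but cites \cite{MaioNtalampekos:packings}*{Lemma 2.6} and \cite{HakobyanLi:qs_embeddings}*{Lemma 4.8}, which proceed in the same way). The only case your write-up skips is $\diam(A)=0$, where $r=0$ and the fatness inequality degenerates; but then $A$ is a single point and disjointness of the $K_i$ gives $\#I'\leq 1$ immediately, so the stated bound still holds.
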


See \cite{MaioNtalampekos:packings}*{Lemma 2.6} for the proof of the statement with the Euclidean metric. The proof remains unchanged if one uses the spherical metric. See also \cite{HakobyanLi:qs_embeddings}*{Lemma 4.8} for another argument. A proof of the next elementary lemma can be found in \cite{Ntalampekos:CarpetsThesis}*{Remark 2.3.5} or \cite{Bonk:uniformization}*{(42), p.~615}.

\begin{lemma}\label{lemma:fat_annulus}
Let $\tau>0$ and $K\subset \widehat \C$ be a $\tau$-fat continuum. Let $A(x;r,R)$ be an annulus in $\widehat \C$ such that $K$ intersects both $\partial B(x,r)$ and $\partial B(x,R)$. Then 
$$\Sigma ( A(x;r,R)\cap K)\geq c(\tau)(R-r)^2.$$
\end{lemma}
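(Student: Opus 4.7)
The plan is to exploit the fact that $K$ is connected and crosses from the inner to the outer boundary circle of the annulus, together with the definition of $\tau$-fatness applied at a single well-chosen point. The upshot is that one ball of radius comparable to $R-r$ already accounts for all the required area; no packing or covering argument is needed.

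First, since $K$ is a continuum meeting both $\partial B(x,r)$ and $\partial B(x,R)$, the continuous function $z\mapsto \sigma(x,z)$ on $K$ takes every value in $[r,R]$ by the intermediate value theorem. I will pick $y\in K$ with $\sigma(x,y)=(r+R)/2$ and set $\rho=(R-r)/2$. The triangle inequality shows that the open ball $B(y,\rho)$ is contained in the closed annulus $A(x;r,R)$: for any $z\in B(y,\rho)$, both $\sigma(x,z)<(r+R)/2+(R-r)/2=R$ and $\sigma(x,z)>(r+R)/2-(R-r)/2=r$ hold.

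Next, I will verify that the ball $B(y,\rho)$ does not contain $K$, which is necessary in order to apply the definition of $\tau$-fatness. Choose any $y'\in K\cap \partial B(x,r)$. The reverse triangle inequality gives $\sigma(y,y')\geq \sigma(x,y)-\sigma(x,y')=(R-r)/2=\rho$, so $y'\notin B(y,\rho)$ (the ball is open), and hence $K\not\subset B(y,\rho)$.

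Finally, since $y\in K$ and $B(y,\rho)$ does not contain $K$, the $\tau$-fatness condition yields $\Sigma(B(y,\rho)\cap K)\geq \tau\rho^2$. Because $B(y,\rho)\subset A(x;r,R)$, this gives $\Sigma(A(x;r,R)\cap K)\geq \tau(R-r)^2/4$, establishing the lemma with $c(\tau)=\tau/4$. No serious obstacle is anticipated; the argument only uses connectedness of $K$, one application of the fatness definition, and elementary triangle-inequality bookkeeping on the sphere. The only minor subtlety is the open versus closed ball convention, which must be tracked to ensure that the witness point $y'$ on $\partial B(x,r)$ actually lies outside the open ball $B(y,\rho)$.
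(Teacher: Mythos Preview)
Your proof is correct; the paper does not supply its own argument for this lemma but instead cites \cite{Ntalampekos:CarpetsThesis}*{Remark 2.3.5} and \cite{Bonk:uniformization}*{(42), p.~615}, where the same single-ball argument (choose a point of $K$ on the midcircle and apply fatness at radius $(R-r)/2$) is what appears. Your bookkeeping on the open/closed ball issue is handled correctly.
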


An elementary property of (closed) geometric disks in $\widehat \C$ is that they are $\pi^{-1}$-fat; see the argument in \cite{Bonk:uniformization}*{p.~609} and adapt it to the spherical metric. Schramm proved the quasiconformal invariance of fatness, which yields the next statement.

\begin{lemma}[\cite{Schramm:transboundary}*{Corollary 2.3}]\label{lemma:fat_quasidisk}
Each (closed) quasidisk is fat, quantitatively. 
\end{lemma}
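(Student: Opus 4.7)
The plan is to combine Schramm's theorem on the quasiconformal invariance of fatness \cite{Schramm:transboundary}*{Corollary 2.3} with the fact, noted just before the lemma, that closed geometric disks are $\pi^{-1}$-fat. Let $U$ be a $K$-quasidisk, so there exists a $K$-quasiconformal homeomorphism $f\colon\widehat{\C}\to\widehat{\C}$ with $\bar U=f(\bar D)$ for some closed disk $\bar D$. By property \ref{q:qm_qc}, $f$ is $\eta$-quasi-M\"obius with $\eta$ depending only on $K$, and by Lemma \ref{lemma:quasimobius:cross}, $f$ distorts relative distances of continua in a quantitative way that depends only on $K$.

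To extract fatness directly, fix $x\in\bar U$ and $r>0$ with $B(x,r)\not\supset \bar U$, and set $y=f^{-1}(x)\in \bar D$. The first step is to find $s>0$ such that $B(y,s)\cap\bar D\subset f^{-1}(B(x,r)\cap \bar U)$ and $s$ is quantitatively comparable, through $\eta$, to the ``correct'' scale determined by $r$, $\diam(\bar U)$, and $\diam(\bar D)$. This is carried out via the quasi-M\"obius cross-ratio bound applied to the four-tuple consisting of $x$, a point of $\bar U$ at spherical distance $\geq r$ from $x$ (which exists since $B(x,r)\not\supset \bar U$), a nearby point of $\bar U\cap B(x,r)$, and a reference point pulled back from $f$ to normalize diameters.

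Next, I would apply the $\pi^{-1}$-fatness of $\bar D$ to obtain $\Sigma(B(y,s)\cap \bar D)\geq \pi^{-1}s^2$, and then transfer this lower bound to $f(B(y,s)\cap \bar D)\subset B(x,r)\cap \bar U$ using a quantitative area distortion estimate for $K$-quasiconformal maps of the sphere. One convenient such estimate is the two-sided comparison
$$\frac{\Sigma(f(E))}{\Sigma(f(B))}\geq c(K)\left(\frac{\Sigma(E)}{\Sigma(B)}\right)^{K}$$
valid when $E\subset B$ and $B$ is a round ball in $\widehat{\C}$; applied with $E=B(y,s)\cap \bar D$ and $B$ a ball of controlled spherical size containing $E$ and mapped into a ball of controlled size around $x$, this yields $\Sigma(B(x,r)\cap \bar U)\geq \tau r^2$ with $\tau=\tau(K)$.

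The hard part is keeping the argument quantitative throughout, since the general passage from quasi-M\"obius to quasisymmetric in \ref{q:qs_qm} is not quantitative. The remedy is to normalize at the outset by pre- and postcomposing $f$ with M\"obius transformations (which preserve both the quasiconformal distortion of $f$ and the spherical notion of fatness, by \ref{q:inverse} and the invariance of $\Sigma$ under isometries of $\widehat{\C}$) so that $\bar D$ and $\bar U$ each contain three points whose mutual spherical distances are bounded below by a universal constant. After this normalization, the quasi-M\"obius condition forces the restriction of $f$ to $\bar D$ to be $\eta'$-quasisymmetric with $\eta'$ depending only on $K$, and both the geometric step and the measure-theoretic step above become quantitative, producing the desired constant $\tau=\tau(K)$.
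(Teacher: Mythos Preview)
The paper does not give a proof of this lemma at all; it simply quotes Schramm's result as a black box, preceded by the remark that geometric disks are $\pi^{-1}$-fat and that Schramm proved the quasiconformal invariance of fatness. So your proposal is not so much a different route as an attempt to unpack the cited result. The overall strategy you describe---start from the $\pi^{-1}$-fatness of a round disk and push it forward through a $K$-quasiconformal map using quasi-M\"obius distortion together with an area distortion bound---is indeed the standard way to prove quasiconformal invariance of fatness, and in that sense you are on the right track.

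There is, however, a genuine gap in your normalization step. You write that pre- and postcomposition with M\"obius transformations ``preserve \ldots the spherical notion of fatness, by \ref{q:inverse} and the invariance of $\Sigma$ under isometries of $\widehat{\C}$''. But a general M\"obius transformation of $\widehat{\C}$ is \emph{not} an isometry for the spherical metric and does not preserve spherical balls or spherical area in a uniform way; only the rotations do, and rotations alone are not enough to force three uniformly separated points into an arbitrary quasidisk $\bar U$. The statement ``M\"obius transformations preserve (spherical) fatness quantitatively'' is essentially the $K=1$ case of the lemma you are trying to prove, so invoking it here is circular. The clean fix is to drop the spherical picture at this point: pass to Euclidean fatness (the paper notes just above the lemma that Euclidean and spherical fatness are quantitatively equivalent), normalize by M\"obius transformations so that, say, $\bar D=\bar{\D}$ and $0,1,\infty$ are fixed by $f$, and then use the standard fact that a $K$-quasiconformal self-map of $\widehat{\C}$ fixing $0,1,\infty$ is $\eta(K)$-quasisymmetric on $\C$ in the Euclidean metric. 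With that quasisymmetry in hand, both your scale-comparison step and your area-distortion step go through quantitatively and yield $\tau=\tau(K)$.
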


Variants of the next lemma have appeared repeatedly in the literature; see e.g.\ \cite{Bojarski:inequality}*{Lemma 4.2}. The current statement appears in \cite{Ntalampekos:uniformization_packing}*{Lemma 2.7}.

\begin{lemma}\label{lemma:bojarski}
Let $I$ be a countable index set and $a\geq 1$. Suppose that $\{b_i\}_{i\in I}$ is a collection of non-negative numbers, $\{K_i\}_{i\in I}$ is a family of measurable sets, and $\{B_i\}_{i\in I}$ is a family of disks in $\widehat \C$ such that $K_i\subset B_i$ and $\Sigma(B_i)\leq a\cdot \Sigma(K_i)$ for each $i\in I$. Then 
$$\left\|\sum_{i\in I}b_i\chi_{B_i} \right\|_{L^2(\widehat \C)} \leq C(a)\left\|\sum_{i\in I}b_i\chi_{K_i} \right\|_{L^2(\widehat \C)}.$$
\end{lemma}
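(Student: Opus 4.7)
The plan is to establish this Bojarski-type inequality by duality combined with the boundedness of the Hardy--Littlewood maximal operator on $L^{2}(\widehat{\C},\Sigma)$. The spherical measure $\Sigma$ is doubling, so $\widehat{\C}$ is a space of homogeneous type and the centered maximal function
$$M f(x)=\sup_{r>0}\frac{1}{\Sigma(B(x,r))}\int_{B(x,r)}|f|\,d\Sigma$$
is bounded on $L^{2}(\widehat{\C},\Sigma)$, with some absolute constant $C_{M}$. This is the only external input I need.

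Now I would argue by duality. By standard $L^{2}$ duality it suffices to show that for every nonnegative $f\in L^{2}(\widehat{\C},\Sigma)$ with $\|f\|_{L^{2}}\leq 1$,
$$\int_{\widehat{\C}} f\Big(\sum_{i\in I} b_{i}\chi_{B_{i}}\Big)\,d\Sigma \;\leq\; C(a)\,\Big\|\sum_{i\in I} b_{i}\chi_{K_{i}}\Big\|_{L^{2}(\widehat{\C})}.$$
The key pointwise estimate is the following. For each $i\in I$ and each $x\in K_{i}\subset B_{i}$, the ball $B_{i}$ is a legitimate ball through $x$, so
$$\frac{1}{\Sigma(B_{i})}\int_{B_{i}} f\,d\Sigma \;\leq\; M f(x).$$
Averaging this inequality over $K_{i}$ and using the hypothesis $\Sigma(B_{i})\leq a\,\Sigma(K_{i})$ yields
$$\int_{B_{i}} f\,d\Sigma \;=\; \Sigma(B_{i})\cdot\frac{1}{\Sigma(B_{i})}\int_{B_{i}} f\,d\Sigma \;\leq\; \frac{\Sigma(B_{i})}{\Sigma(K_{i})}\int_{K_{i}} M f\,d\Sigma \;\leq\; a\int_{K_{i}} M f\,d\Sigma.$$

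Multiplying by $b_{i}$ and summing over $i$ gives
$$\int_{\widehat{\C}} f\Big(\sum_{i\in I} b_{i}\chi_{B_{i}}\Big) d\Sigma \;\leq\; a\int_{\widehat{\C}} M f\Big(\sum_{i\in I} b_{i}\chi_{K_{i}}\Big) d\Sigma.$$
Apply the Cauchy--Schwarz inequality on the right and then the $L^{2}$-boundedness of $M$:
$$\int_{\widehat{\C}} f\Big(\sum_{i\in I} b_{i}\chi_{B_{i}}\Big) d\Sigma \;\leq\; a\,\|Mf\|_{L^{2}}\Big\|\sum_{i} b_{i}\chi_{K_{i}}\Big\|_{L^{2}} \;\leq\; a\,C_{M}\,\|f\|_{L^{2}}\Big\|\sum_{i} b_{i}\chi_{K_{i}}\Big\|_{L^{2}}.$$
Taking the supremum over all admissible $f$ yields the desired inequality with $C(a)=a\,C_{M}$.

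The only subtle point is technical rather than conceptual: one must justify the interchange of sum and integral above, which is immediate by monotone convergence once one truncates to a finite subfamily $I'\subset I$ and then lets $I'\uparrow I$, and one must ensure the argument works when some $b_{i}$ or sums are infinite (the inequality being trivial in that case, or deducible from the finite truncations). There is no genuine obstacle here; the argument is a standard maximal-function duality, and the hypothesis $\Sigma(B_{i})\leq a\,\Sigma(K_{i})$ is precisely what converts the ball average of $f$ over $B_{i}$ into an integral of $M f$ over $K_{i}$ without loss.
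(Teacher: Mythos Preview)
Your argument is correct and is precisely the standard Bojarski duality proof. The paper does not give its own proof of this lemma; it merely cites \cite{Bojarski:inequality}*{Lemma~4.2} and \cite{Ntalampekos:uniformization_packing}*{Lemma~2.7}, and those references carry out essentially the same maximal-function duality you wrote. So there is nothing to compare: your proof is the expected one.

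One small technical correction. You define $M$ as the \emph{centered} maximal function, but the ball $B_i$ need not be centered at the point $x\in K_i$, so the inequality $\frac{1}{\Sigma(B_i)}\int_{B_i} f\,d\Sigma \leq Mf(x)$ is not literally what you wrote. Either replace $M$ by the uncentered maximal function (which on the doubling space $(\widehat\C,\sigma,\Sigma)$ is pointwise comparable to the centered one and still $L^2$-bounded), or insert one use of doubling: if $B_i=B(c_i,r_i)$ and $x\in B_i$, then $B_i\subset B(x,2r_i)$ and $\Sigma(B(x,2r_i))\leq C_d\,\Sigma(B_i)$, whence $\frac{1}{\Sigma(B_i)}\int_{B_i} f\,d\Sigma \leq C_d\,Mf(x)$. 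This only changes the final constant to $C(a)=a\,C_d\,C_M$, and the rest of your argument goes through verbatim.
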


\subsection{Lower bound for transboundary modulus}\label{section:lower}
The results in this and in the next section provide sufficient conditions for transboundary modulus to behave similarly to conformal modulus. The following proposition appears in a weaker form in \cite{HakobyanLi:qs_embeddings}*{Lemma 4.6} and in \cite{Rehmert:thesis}*{Lemma 3.3.5}, and its proof is essentially a modification of the proof of \cite{Bonk:uniformization}*{Prop.\ 8.1}. 

\begin{proposition}[Comparison of classical and transboundary modulus]\label{prop:modulus_compare}
Let $\tau>0$. Let $\Omega\subset \widehat \C$ be a domain and $\mathcal K=\{K_i\}_{i\in I}$ be a collection of disjoint $\tau$-fat continua in $\Omega$ such that $K=\bigcup_{i\in I}K_i$ is relatively closed in $\Omega$. Then for each curve family $\Gamma$ in $\widehat \C$ we have
$$\min\{1, \Mod \Gamma \}\leq c(\tau) \Mod_{\Omega,\mathcal K} \Gamma.$$
\end{proposition}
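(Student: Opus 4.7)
The plan is to convert an admissible transboundary mass distribution into a classical admissible Borel density of comparable $L^2$-mass, by ``spreading out'' each atomic weight $\rho(K_i)$ into a density on a mild enlargement of $K_i$. The $\tau$-fatness of the $K_i$ combined with Lemma \ref{lemma:bojarski} will provide the mass control. We may assume $M := \Mod_{\Omega,\mathcal{K}}\Gamma < 1$, since otherwise $\min\{1,\Mod\Gamma\} \leq 1 \leq c(\tau)\Mod_{\Omega,\mathcal{K}}\Gamma$ is trivial. Fix $\varepsilon>0$ and an admissible transboundary mass distribution $\rho = (\rho_0, \{\rho(K_i)\}_{i\in I})$ for $\Gamma$ with total mass $\leq M+\varepsilon$; note that $M<1$ forces $\rho(K_i)<1$ for every $i$.

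For each $i\in I$ with $\diam(K_i)>0$, pick $x_i\in K_i$, set $r_i=\diam(K_i)$, and let $B_i = B(x_i, 2r_i)$. Then $K_i\subset B(x_i,r_i)\subset B_i$, so $\dist(K_i, \widehat{\C}\setminus B_i)\geq r_i$. Fatness applied at scale $r_i$ gives $\Sigma(K_i)\geq c(\tau) r_i^2$, while a direct spherical volume estimate gives $\Sigma(B_i)\leq C r_i^2$, so $\Sigma(B_i)\leq C(\tau)\Sigma(K_i)$. Define
\[
\tilde\rho(z) = A\,\rho_0(z)\chi_{\Omega\setminus K}(z) + A\sum_{i\in I}\frac{\rho(K_i)}{r_i}\chi_{B_i}(z),
\]
where $A = A(\tau)\geq 1$ is a constant to be determined. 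Applying Lemma \ref{lemma:bojarski} to the family $\{B_i\}, \{K_i\}$ with weights $b_i = \rho(K_i)/r_i$, together with the disjointness of the $K_i$ and the bound $\Sigma(K_i)\leq C r_i^2$, yields
\[
\Big\|\sum_i \tfrac{\rho(K_i)}{r_i}\chi_{B_i}\Big\|_{L^2}^2 \leq C(\tau) \sum_i \tfrac{\rho(K_i)^2}{r_i^2}\Sigma(K_i) \leq C(\tau)\sum_i \rho(K_i)^2,
\]
and hence $\int \tilde\rho^2\,d\Sigma \leq C(\tau)A^2 M$.

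It remains to verify that $\tilde\rho$ is admissible for $\Gamma$ up to the constant $A$. Given a locally rectifiable $\gamma\in\Gamma$, consider each index $i$ with $|\gamma|\cap K_i\neq\emptyset$. If $|\gamma|$ is not contained in $B_i$, then $\gamma$ must cross the ``collar'' between $K_i$ and $\partial B_i$ of radial width at least $r_i$, so $\length(\gamma\cap B_i)\geq r_i$ and the corresponding term contributes at least $A\rho(K_i)$ to the integral; summing over such $i$ and combining with the transboundary admissibility of $\rho$ gives $\int_\gamma \tilde\rho\, ds \geq A\geq 1$. The main obstacle is the ``trapped'' case where $|\gamma|\subset B_i$ for some $i$: the length of $\gamma$ inside $B_i$ can then be arbitrarily small, so the corresponding $B_i$-term may fail to dominate $\rho(K_i)$. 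One resolves this by replacing the chosen $B_i$ for such a $\gamma$ by the smallest enlargement $B_j$ (among indices $j$ with $|\gamma|\cap K_j\neq\emptyset$) containing $|\gamma|$, and using Lemma \ref{lemma:fat_count} to bound the number of indices $j$ with $\diam(K_j)$ comparable to $\diam(|\gamma|)$ and $|\gamma|\cap K_j\neq\emptyset$; indices with much smaller $K_j$ contribute negligibly to the transboundary sum. After absorbing these corrections into the constant $A=A(\tau)$, the density $\tilde\rho$ is admissible on all of $\Gamma$, giving $\Mod\Gamma\leq C(\tau) M$ and thereby the claim. The delicate book-keeping of Step~4, i.e., the trapped case, is where the main difficulty lies; fortunately the reduction $M<1$ already eliminates all constant curves from $\Gamma$, which simplifies the analysis.
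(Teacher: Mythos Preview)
Your approach is exactly the paper's: spread each weight $\rho(K_i)$ as a density $\rho(K_i)/r_i$ on the enlarged ball $B_i=B(x_i,2r_i)$, use Lemma~\ref{lemma:fat_count} to count the ``trapped'' indices, and use Lemma~\ref{lemma:bojarski} for the mass estimate. The mass bound and the collar argument for the non-trapped indices are correct.

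There is, however, a genuine gap in your treatment of the trapped case. Your reduction to $M<1$ only yields $\rho(K_i)<1$ for each $i$. If $J_1=\{i:K_i\cap|\gamma|\neq\emptyset,\ \diam(B_i)\geq\diam(|\gamma|)\}$ is the set of trapped indices, Lemma~\ref{lemma:fat_count} gives $\#J_1\leq N(\tau)$, but then you only get
\[
\sum_{i\in J_1}\rho(K_i)\leq \sqrt{N(\tau)}\Big(\sum_{i\in J_1}\rho(K_i)^2\Big)^{1/2}<\sqrt{N(\tau)},
\]
which can exceed $1$. Since the admissibility of $\tilde\rho$ hinges on
\[
\int_\gamma\tilde\rho\,ds\geq A\Big(\int_\gamma\rho_0\chi_{\Omega\setminus K}\,ds+\sum_{i\notin J_1}\rho(K_i)\Big)\geq A\Big(1-\sum_{i\in J_1}\rho(K_i)\Big),
\]
the right-hand side can be negative, and no choice of $A=A(\tau)$ rescues this. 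Your sentence ``indices with much smaller $K_j$ contribute negligibly to the transboundary sum'' is not the right mechanism either: small $K_j$ carry no a priori bound on $\rho(K_j)$; rather, they are \emph{not trapped} and are already handled by the collar argument.

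The fix is the one the paper uses: replace the threshold $M<1$ by $M<m$ with $m=m(\tau)=(2N(\tau))^{-2}$. Then $\rho(K_i)<m^{1/2}=(2N(\tau))^{-1}$ for every $i$, so $\sum_{i\in J_1}\rho(K_i)<1/2$, and $2\tilde\rho$ (with $A=1$) is admissible for $\Gamma$ outside the exceptional family of curves passing through the singleton $K_i$'s, which has classical modulus zero. The case $M\geq m$ is absorbed into the constant $c(\tau)$ exactly as you absorbed $M\geq 1$.
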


\begin{proof}
Let $m>0$ be a constant to be determined, depending only on $\tau$. If $\Mod_{\Omega,\mathcal K}\Gamma\geq m$ the statement is true for $c(\tau)\geq m^{-1}$, so suppose that $\Mod_{\Omega,\mathcal K}\Gamma<m$. Also, if $\Gamma$ contains a constant curve that intersects a set $K_i$, $i\in I$, then any transboundary mass distribution $\rho$ that is admissible for $\Gamma$ must satisfy $\rho(K_i)\geq 1$, so $\Mod_{\Omega,\mathcal K}\Gamma \geq 1$. In this case, the desired statement is true for $c(\tau)\geq 1$. Hence we suppose that no constant curve (if any) in $\Gamma$  intersects $K_i$, $i\in I$. 

Let $\rho$ be a transboundary mass distribution that is admissible for $\Gamma$ with 
$$\int_{\Omega\setminus K} \rho^2\, d\Sigma +\sum_{i\in I} \rho(K_i)^2<m.$$
In particular, $\rho(K_i)\neq 0$ for at most countably many $i\in I$ and
\begin{align}\label{prop:modulus_compare:rho_m}
\rho(K_i)<m^{1/2} \,\,\, \text{for each $i\in I$}.
\end{align}
Let $J=\{ i\in I: \diam(K_i)>0\}$, which is a countable set, as can be seen directly by the fatness assumption or by Lemma \ref{lemma:fat_count}. Let $\Gamma_0$ be the family of curves in $\Gamma$ that intersect some of the countably many singletons $K_i$, $i\in I\setminus J$, that satisfy $\rho(K_i)\neq 0$. By our assumption, $\Gamma_0$ is a family of non-constant curves passing through a countable collection of points. Therefore, $\Mod \Gamma_0=0$; see \cite{Vaisala:quasiconformal}*{Section 7.9}. Moreover, 
\begin{align}\label{prop:modulus_compare:rho_zero}
\text{if $\gamma\notin \Gamma_0$, then}\,\,\, \sum_{\substack{i\in I\setminus J\\ |\gamma|\cap K_i\neq \emptyset}} \rho(K_i)=0.
\end{align}

For each $i\in J$ consider a point $x_i\in K_i$ and let $r_i=\diam(K_i)$, so that $K_i\subset \bar B(x_i,r_i)$. The $\tau$-fatness assumption implies that $\Sigma(B(x_i,2r_i))\leq c_1(\tau) \Sigma(K_i)$. We define a Borel function
$$\widetilde \rho =\rho\chi_{\Omega\setminus K}+ \sum_{i\in J} \frac{\rho(K_i)}{r_i}\chi_{B(x_i,2r_i)}.$$
Let $\gamma\in \Gamma\setminus \Gamma_0$ be locally rectifiable. By Lemma \ref{lemma:fat_count}, applied to the set $A=|\gamma|$, there exists a number $N(\tau)>0$ such that the set 
$$J_1=\{i\in J: K_i\cap |\gamma|\neq \emptyset\,\,\, \text{and}\,\,\, \diam(B(x_i,2r_i))\geq \diam(|\gamma|)\}$$ 
has at most $N(\tau)$ elements. Let $i\in J\setminus J_1$ be such that $K_i\cap |\gamma|\neq \emptyset$, so $|\gamma|$ intersects the ball $\bar B(x_i,r_i)$. Since $\diam(|\gamma|)>\diam(B(x_i,2r_i))$, the set $|\gamma|$ is not contained in $B(x_i,2r_i)$. This implies that
\begin{align*}
\int_\gamma \frac{\rho(K_i)}{r_i} \chi_{B(x_i,2r_i)}\, ds\geq \rho(K_i).
\end{align*}
We conclude that 
\begin{align}\label{prop:modulus_compare:rho_tilde}
\int_{\gamma}\widetilde \rho\,ds \geq \int_\gamma \rho\chi_{\Omega\setminus K}\, ds +\sum_{\substack{i\in J\setminus J_1\\ |\gamma|\cap K_i\neq \emptyset}} \rho(K_i).
\end{align}
Also, by \eqref{prop:modulus_compare:rho_m} and the bound on the cardinality of $J_1$, we obtain
$$\sum_{i\in J_1} \rho(K_i) \leq N(\tau) m^{1/2}.$$
We now prescribe $m=(2N(\tau))^{-2}$, so that the above sum is less than $1/2$. Given that $\rho$ is admissible for the transboundary modulus of $\Gamma$, by \eqref{prop:modulus_compare:rho_tilde} and \eqref{prop:modulus_compare:rho_zero} we have
$$\int_{\gamma}\widetilde \rho\, ds\geq 1/2.$$
Therefore $2\widetilde \rho$ is admissible for the conformal modulus of $\Gamma\setminus \Gamma_0$.  By the subadditivity of conformal modulus, the definition of $\widetilde \rho$, and Lemma \ref{lemma:bojarski}, we have
\begin{align*}
\Mod\Gamma&\leq \Mod\Gamma_0+\Mod (\Gamma\setminus \Gamma_0)=\Mod(\Gamma\setminus \Gamma_0) \leq 4\int \widetilde \rho^2\, d\Sigma\\
&\leq 8\int_{\Omega\setminus K} \rho^2\, d\Sigma+ 8\int  \left(\sum_{i\in J} \frac{\rho(K_i)}{r_i}\chi_{B(x_i,2r_i)} \right)^2\, d\Sigma\\
&\leq 8\int_{\Omega\setminus K} \rho^2\, d\Sigma+ c_2(\tau) \int  \left(\sum_{i\in J} \frac{\rho(K_i)}{r_i}\chi_{K_i} \right)^2\, d\Sigma\\
&= 8\int_{\Omega\setminus K} \rho^2\, d\Sigma+ c_2(\tau) \sum_{i\in J}\frac{\rho(K_i)^2}{r_i^2}\Sigma(K_i)\\
&\leq c_3(\tau) \left( \int_{\Omega\setminus K} \rho^2\, d\Sigma +\sum_{i\in I} \rho(K_i)^2 \right).
\end{align*}
Infimizing over $\rho$ gives the desired inequality with $c(\tau)\geq \max\{m^{-1},1,c_3(\tau)\}$.
\end{proof}

Given an open set $\Omega\subset \widehat \C$ and sets $E,F\subset \bar \Omega$, we denote by $\Gamma(E,F;\Omega)$ the family of curves $\gamma\colon[a,b]\to \bar \Omega$ such that $\gamma(a)\in E$, $\gamma(b)\in F$, and $\gamma((a,b))\subset \Omega$. Let $\Omega\subset \widehat \C$ be a domain. If there exists a decreasing function $\phi\colon (0,\infty)\to (0,\infty)$ such that
$$\Mod\Gamma(E,F;\Omega)\geq \phi(\Delta(E,F))$$
for every pair of disjoint non-degenerate continua $E,F\subset \bar \Omega$, then we call $\Omega$ a \textit{Loewner region}. The function $\phi$ is called the \textit{Loewner function} of $\Omega$ and we also say that $\Omega$ is a $\phi$-Loewner region.  In this work we will need the facts that the sphere $\widehat{\C}$ is a Loewner region (see \cite{HeinonenKoskela:qc}*{Section 6}) and that each quasidisk is a Loewner region, quantitatively. The latter follows from \cite{BonkHeinonenKoskela:gromov_hyperbolic}*{Remarks 6.6}, but it can be derived easily from Lemma \ref{lemma:quasimobius:cross} and the facts that the unit disk $\D$ is a Loewner region and that quasiconformal maps of the sphere are quasi-M\"obius (see \ref{q:qm_qc}).

We now obtain a modified version of \cite{Bonk:uniformization}*{Prop.\ 8.1}. The main difference is that Bonk's result assumes the continua of $\mathcal K$ to be uniformly quasiround and uniformly relatively separated, while here we use the condition of fatness. See also \cite{HakobyanLi:qs_embeddings}*{Theorem 4.3}.

\begin{proposition}[Loewner property of transboundary modulus]\label{prop:modulus_lower}
Let $\tau>0$. Let  $\Omega\subset \widehat \C$ be a Loewner region and $\mathcal K=\{K_i\}_{i\in I}$ be a collection of disjoint $\tau$-fat continua in $\Omega$  such that $K=\bigcup_{i\in I}K_i$ is relatively closed in $\Omega$. Then there exists a decreasing function $\phi\colon(0,\infty)\to (0,\infty)$ that depends only on the Loewner function of $\Omega$ and on $\tau$ with the following property. If $E,F\subset \bar\Omega$ are disjoint non-degenerate continua, then
$$\Mod_{\Omega,\mathcal K} \Gamma(E,F;\Omega) \geq \phi(\Delta(E,F)).$$
\end{proposition}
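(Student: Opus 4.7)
The plan is to obtain this statement as a direct combination of two already-established ingredients: the Loewner hypothesis on $\Omega$ and Proposition \ref{prop:modulus_compare}. The Loewner assumption converts the hypothesis $\Delta(E,F) \leq t$ into a lower bound on the \emph{classical} modulus of $\Gamma(E,F;\Omega)$, while Proposition \ref{prop:modulus_compare} transfers a lower bound on classical modulus to a lower bound on transboundary modulus at the price of a multiplicative constant depending on the fatness parameter $\tau$.

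In detail, let $\phi_0 \colon (0,\infty) \to (0,\infty)$ be the Loewner function of $\Omega$. Given disjoint non-degenerate continua $E,F \subset \bar\Omega$, the definition of a Loewner region yields
$$\Mod \Gamma(E,F;\Omega) \geq \phi_0(\Delta(E,F)).$$
The family $\Gamma(E,F;\Omega)$ is in particular a curve family in $\widehat\C$, and the collection $\mathcal K = \{K_i\}_{i \in I}$ satisfies the hypotheses of Proposition \ref{prop:modulus_compare} by assumption. Therefore,
$$\min\{1, \Mod \Gamma(E,F;\Omega)\} \leq c(\tau)\, \Mod_{\Omega,\mathcal K} \Gamma(E,F;\Omega).$$
Combining the two displayed inequalities gives
$$\Mod_{\Omega,\mathcal K} \Gamma(E,F;\Omega) \geq c(\tau)^{-1} \min\{1, \phi_0(\Delta(E,F))\}.$$
Setting $\phi(t) = c(\tau)^{-1} \min\{1, \phi_0(t)\}$ produces a non-increasing positive function on $(0,\infty)$ depending only on $\tau$ and on the Loewner function of $\Omega$ (and which can be perturbed slightly to be strictly decreasing if desired), completing the proof.

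There is no real obstacle here beyond verifying that the hypotheses of Proposition \ref{prop:modulus_compare} are met: the $K_i$ are $\tau$-fat and their union is relatively closed in $\Omega$, both of which are part of the present hypothesis. The entire content of the proposition has effectively been packaged into Proposition \ref{prop:modulus_compare} and the Loewner assumption; the point of isolating the statement of Proposition \ref{prop:modulus_lower} is to have a convenient ``Loewner-type'' lower bound for transboundary modulus that can be invoked directly in later sections (for example, in the proof of the quasisymmetric distortion estimates underlying Theorem \ref{theorem:main}).
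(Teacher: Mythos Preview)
Your proof is correct and follows exactly the same approach as the paper: the paper's proof is a one-line application of Proposition~\ref{prop:modulus_compare} to the Loewner lower bound, taking $\phi(t)=c(\tau)^{-1}\min\{1,\widetilde\phi(t)\}$ where $\widetilde\phi$ is the Loewner function of $\Omega$.
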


\begin{proof}
Suppose that $\Omega$ is a $\widetilde \phi$-Loewner region. In view of Proposition \ref{prop:modulus_compare}, we may take $\phi(t)= c(\tau)^{-1}\min\{1,\widetilde \phi(t)\}$.
\end{proof}

\subsection{Upper bound for transboundary modulus}\label{section:modulus_upper}

For an annulus $A$ in $\widehat \C$ we denote by $\Gamma(A)$ the family of curves $\gamma\colon[a,b]\to A$ such that $\gamma(a)$ and $\gamma(b)$ lie in distinct components of $\partial A$ and $\gamma((a,b))\subset \inter(A)$. The following result is a distilled version of \cite{Bonk:uniformization}*{Prop.\ 8.7}. 

\begin{proposition}[Transboundary modulus upper bound]\label{prop:modulus_upper}
Let $a,\tau>0$. Let $\mathcal K=\{K_i\}_{i\in I}$ be a countable collection of disjoint $\tau$-fat continua in $\widehat \C$ such that $K=\bigcup_{i\in I}K_i$ is a closed set. There exists a decreasing homeomorphism $\psi\colon (0,\infty)\to (0,\infty)$
that depends only on $a,\tau$ with the following property. If $A=A(x;r,R)$ is an annulus in $\widehat \C$ with $w_A(K_i)\leq a\cdot w_A^{1/3}$ for each $i\in I$, then  
$$\Mod_{\widehat \C, \mathcal K} \Gamma(A(x;r,R))\leq \psi(\log(R/r)).$$
\end{proposition}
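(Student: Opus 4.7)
The plan is to exhibit an admissible transboundary mass distribution $\rho$ whose total mass is at most $C(\tau,a)(w_A^{-2}+w_A^{-2/3})$; then $\psi(t)=C(\tau,a)(t^{-2}+t^{-2/3})$ is a decreasing homeomorphism of $(0,\infty)$ onto itself and yields the claim. Motivated by the extremal function for the classical round annulus I would take
\[
\rho(y)=\frac{1}{w_A\,\sigma(x,y)}\ \text{ on }\ A\setminus K,\qquad \rho(K_i)=\frac{w_A(K_i)}{w_A}.
\]

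Admissibility is proved via the radial logarithm $f(y)=\log\sigma(x,y)$: for any $\gamma\in\Gamma(A)$ the image $f\circ\gamma$ surjects onto $[\log r,\log R]$, an interval of length $w_A$. Splitting this image into $f(\text{gaps})$ and the intervals $f(K_i\cap A)$, whose Lebesgue measures are controlled by $\int_\gamma\chi_{A\setminus K}\,ds_\sigma/\sigma(x,\cdot)$ and by $w_A(K_i)$ respectively, yields exactly $w_A$ times the transboundary admissibility inequality for $\rho$.

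The mass decomposes as $\tfrac{1}{w_A^2}\bigl[\int_A d\Sigma/\sigma(x,\cdot)^2+\sum_i w_A(K_i)^2\bigr]$, and the first term is bounded by $2\pi/w_A$ by integrating in spherical coordinates centered at $x$ and using $\sin\theta\le\theta$. The heart of the proof is to bound $\sum_i w_A(K_i)^2$, split according to whether $w_A(K_i)\le 1$ or $w_A(K_i)>1$. For the short pieces, I assign each $K_i$ to the band $r^\sigma_A(K_i)\in[re^k,re^{k+1}]$, use $K_i\cap A\subset A(x;re^k,re^{k+2})$ together with Lemma~\ref{lemma:fat_annulus} and the elementary inequality $R^\sigma_A(K_i)-r^\sigma_A(K_i)\ge r^\sigma_A(K_i)\,w_A(K_i)$, and exploit the universal bound $\Sigma(A(x;re^k,re^{k+2}))\le C(re^k)^2$ (obtained from $\sin\theta\le\theta$) to get $\sum w_A(K_i)^2\le C(\tau)$ per band; summing over the $O(w_A+1)$ bands gives $C(\tau)\max(w_A,1)$. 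For the long pieces, the key is that any $K_i$ with $w_A(K_i)>1$ crossing a concentric circle $C_t=\partial B(x,t)$ satisfies $r^\sigma_A(K_i)\le R^\sigma_A(K_i)/e\le t$ and $R^\sigma_A(K_i)\ge t$, hence $\diam(K_i)\ge R^\sigma_A(K_i)-r^\sigma_A(K_i)\ge (1-1/e)\,t$, while $\diam(C_t)=2\sin t$; the ratio $(1-1/e)\,t/(2\sin t)$ is bounded below on $(0,\pi)$ by $(1-1/e)/2$, so Lemma~\ref{lemma:fat_count} uniformly bounds the number of such continua crossing any single $C_t$ by $C(\tau)$. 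Integrating this pointwise bound against $dt/t$ gives $\sum_{w_A(K_i)>1}w_A(K_i)\le C(\tau)\,w_A$, and the hypothesis $w_A(K_i)\le a\,w_A^{1/3}$ then upgrades this to $\sum_{w_A(K_i)>1}w_A(K_i)^2\le C(\tau)\,a\,w_A^{4/3}$.

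Assembling the estimates, the total mass of $\rho$ is at most $C(\tau,a)(w_A^{-2}+w_A^{-2/3})$—the $w_A^{-2}$ term dominating at small $w_A$ via the short-piece contribution and $w_A^{-2/3}$ dominating at large $w_A$ via the long-piece contribution—yielding the claimed $\psi$. I expect the main technical subtlety to be the uniform circle-counting estimate for the long pieces: one must verify that $(1-1/e)\,t/(2\sin t)$ admits a universal positive lower bound throughout the full range $(0,\pi)$, including the regime $t\to\pi^-$ where $\sin t\to 0$. This holds because the auxiliary function $t/\sin t$ is strictly increasing on $(0,\pi)$ with $\lim_{t\to 0^+}t/\sin t=1$ and $\lim_{t\to\pi^-}t/\sin t=+\infty$, so the ratio is minimized by its limit $(1-1/e)/2$ at $0$ and Lemma~\ref{lemma:fat_count} applies with a single $\tau$-dependent constant throughout.
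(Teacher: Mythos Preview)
Your approach is correct and shares the same architecture as the paper's: identical mass distribution $\rho$, identical admissibility argument via the radial logarithm, identical continuous estimate, and the same short/long split of the discrete sum. The differences lie only in how you estimate each half of the discrete mass. For the short pieces the paper bypasses your dyadic band decomposition by observing directly that $w_A(K_i)^2\le c(\tau)\int_{A\cap K_i}\sigma(x,\cdot)^{-2}\,d\Sigma$ (combine $R_i\le 2r_i$ with Lemma~\ref{lemma:fat_annulus}) and summing over the disjoint $A\cap K_i$ to get $\sum_{I_1}w_A(K_i)^2\le 2\pi c(\tau)w_A$ in one line; your band argument reaches the same order of bound but with more bookkeeping. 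For the long pieces the paper simply bounds the \emph{cardinality} $\#I_2\le c(\tau)w_A$ via the same integral trick and multiplies by $a^2w_A^{2/3}$, yielding mass $\le c(a,\tau)w_A^{-1/3}$; your circle-crossing count via Lemma~\ref{lemma:fat_count} instead controls $\sum_{I_2}w_A(K_i)$ and, after one application of the hypothesis, gives the sharper mass bound $c(a,\tau)w_A^{-2/3}$. So your $\psi(t)=C(t^{-2}+t^{-2/3})$ actually improves on the paper's $\psi(t)=C(t^{-1}+t^{-1/3})$ for large $t$. One small correction: the spherical diameter of $C_t=\partial B(x,t)$ is $2\min(t,\pi-t)$, not $2\sin t$ (the latter is its chordal diameter); since $2\min(t,\pi-t)\le 2t$, your lower bound on $\diam(K_i)/\diam(C_t)$ survives unchanged.
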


Compare this inequality to the corresponding well-known estimate for conformal modulus (see \cite{Heinonen:metric}*{Lemma 7.18}):
$$\Mod\Gamma(A(x;r,R))\leq C (\log(R/r))^{-1}.$$  
Thus, Proposition \ref{prop:modulus_upper} asserts that if the uniformly fat continua $K_i$, $i\in I$, have small width relative to the annulus $A$, then the transboundary modulus of $\Gamma(A)$ enjoys an upper bound that is very similar to the upper bound of conformal modulus.

\begin{proof}
Let $A=A(x;r,R)$ be an annulus in $\widehat \C$ with $w_A(K_i)\leq a\cdot w_A^{1/3}$ for each $i\in I$. We define a transboundary mass distribution $\rho$ by
\begin{align*}
\rho(z)= \frac{\chi_A(z)}{w_{A} \sigma(x,z)}  \,\,\text{for}\,\, z\in \widehat \C\setminus K \quad \text{and}\quad \rho(K_i)=\frac{w_{A}(K_i)}{w_{A}} \,\, \text{for}\,\, i\in I.
\end{align*}
We show the admissibility of $\rho$ for $\Gamma(A)$. Let $\gamma\in \Gamma(A)$ be a curve that is locally rectifiable in $\widehat \C\setminus K$. Consider the function $\boldsymbol\pi\colon A \to [\log r,\log R]$ defined by $\boldsymbol\pi(z)=\log \sigma(x,z)$. Observe that
\begin{align}\label{prop:modulus_upper:pi}
[\log r,\log R] =\boldsymbol\pi(A)=\boldsymbol\pi(|\gamma|) = \boldsymbol\pi( |\gamma|\setminus K) \cup \bigcup_{i:K_i\cap |\gamma|\neq \emptyset} \boldsymbol\pi(A\cap K_i).
\end{align}
We break up the curve $\gamma$ into countably many non-overlapping pieces, each of which lies in $\widehat \C\setminus K$. For each such piece $\alpha$, an elementary inequality \cite{Bonk:uniformization}*{(37), p.~612} states that
$$ \int_{\alpha} \frac{1}{\sigma(x,\cdot )}\, ds \geq m_1(\boldsymbol\pi(|\alpha|)),$$
where $m_1$ denotes the $1$-dimensional Lebesgue measure. By applying this to each piece and then summing, we obtain
$$\int_{\gamma} \rho\chi_{\widehat \C\setminus K}\, ds= \frac{1}{w_{A}} \int_{\gamma} \frac{1}{\sigma(x,\cdot )} \chi_{\widehat \C \setminus K}\, ds\geq \frac{1}{w_{A}} m_1(\boldsymbol\pi( |\gamma|\setminus K)).$$
Also, for $i\in I$, by the definition of $w_{A}(K_i)$ we have
$$\rho(K_i)=\frac{w_{A}(K_i)}{w_{A}} \geq  \frac{1}{w_{A}} m_1(\boldsymbol\pi(A\cap K_i)).$$
Summarizing, 
\begin{align*}
\int_{\gamma} \rho\chi_{\widehat \C\setminus K}\, ds + \hspace{-1pt} \sum_{i: K_i\cap |\gamma|\neq \emptyset}\rho(K_i) &\geq \frac{1}{w_{A}}\bigg(m_1(\boldsymbol\pi( |\gamma|\setminus K))+ \hspace{-1pt} \sum_{i:K_i\cap |\gamma|\neq \emptyset}  m_1(\boldsymbol\pi(A\cap K_i))\bigg)  \\
&\geq \frac{1}{w_{A}} m_1(\boldsymbol\pi(|\gamma|))= 1,
\end{align*} 
where the last inequality follows from \eqref{prop:modulus_upper:pi} and the countable subadditivity of Lebesgue measure. The countability of the index set $I$ is used only here.

Finally, we produce a favorable upper bound for the mass of $\rho$.  Via integration in spherical coordinates, we have
\begin{align}\label{prop:modulus_upper:spherical}
\int_{A} \frac{d\Sigma}{\sigma(x,\cdot)^2}= 2\pi \int_{r}^{R} \frac{\sin \theta}{\theta^2}\, d\theta \leq 2\pi \int_{r}^{R} \frac{d\theta}{\theta}= 2\pi \cdot w_A.
\end{align}
Therefore,
\begin{align}\label{prop:modulus_upper:mass_cont}
\int_{\widehat \C\setminus K} \rho^2\, d\Sigma\leq \frac{1}{w_{A}^2} \int_{A} \frac{d\Sigma}{\sigma(x,\cdot)^2}\leq \frac{2\pi}{w_{A}}.
\end{align}
Regarding the discrete part of $\rho$, let $I_1$ be the set of indices $i\in I$ such that $A\cap K_i\neq \emptyset$ and $w_{A}(K_i)\leq \log 2$, and let $I_2$ be the set of $i\in I$ such that $A\cap K_i\neq \emptyset$ and $w_{A}(K_i)>\log 2$. Note that $\rho(K_i)=0$ if $A\cap K_i=\emptyset$, so 
$$\sum_{i\in I} \rho(K_i)^2=\sum_{i\in  I_1} \rho(K_i)^2+\sum_{i\in  I_2} \rho(K_i)^2.$$
If $w_{A}(K_i)>0$, we consider an annulus ${A(x;r_i,R_i)} \supset A\cap K_i$ such that $w_{A}(K_i)=\log(R_i/r_i)$. Suppose that $i\in I_1$ and $w_A(K_i)>0$, so $R_i\leq 2r_i$. By Lemma \ref{lemma:fat_annulus},
\begin{align*}
\bigg(\log\frac{R_i}{r_i}\bigg)^2\leq \left(\frac{R_i-r_i}{r_i}\right)^2\leq \frac{4(R_i-r_i)^2}{R_i^2}\leq c(\tau) \frac{\Sigma(A\cap K_i)}{R_i^2}\leq c(\tau)\int_{A\cap K_i} \frac{d\Sigma}{\sigma(x,\cdot)^2}.
\end{align*}
Therefore, by \eqref{prop:modulus_upper:spherical} we obtain
\begin{align}\label{prop:modulus_upper:mass_1}
\sum_{i\in I_1}\rho(K_i)^2 =\frac{1}{w_A^2}\sum_{i\in I_1}w_A(K_i)^2 \leq \frac{c(\tau)}{w_{A}^2}\int_{A}\frac{d\Sigma}{\sigma(x,\cdot)^2}\leq \frac{2\pi c(\tau)}{w_{A}}.
\end{align}
Next, we treat the sum over $I_2$. For $i\in I_2$ we have $2r_i<R_i$, so
\begin{align*}
\frac{1}{4}\leq \frac{(R_i-r_i)^2}{R_i^2}\leq c(\tau) \frac{\Sigma(A\cap K_i)}{R_i^2}\leq c(\tau)\int_{A\cap K_i} \frac{d\Sigma}{\sigma(x,\cdot)^2}.
\end{align*}
As a consequence, by \eqref{prop:modulus_upper:spherical} we have
$$\frac{1}{4}\# I_2 \leq c(\tau) \int_{A} \frac{d\Sigma}{\sigma(x,\cdot)^2}\leq  2\pi c(\tau) w_{A},$$
where $\# I_2$ denotes the cardinality of $I_2$, so
$$\#I_2 \leq 8\pi c(\tau)w_{A}.$$
By the assumption that $w_A(K_i)\leq a\cdot w_A^{1/3}$ for each $i\in I$, we conclude that
\begin{align}\label{prop:modulus_upper:mass_2}
\sum_{i\in I_2} \rho(K_i)^2 = \frac{1}{w_{A}^2} \sum_{i\in I_2}w_{A}(K_i)^2\leq \frac{1}{w_{A}^2} \#I_2 \cdot a^2w_{A}^{2/3}\leq \frac{8\pi a^2 c(\tau)}{w_{A}^{1/3}}. 
\end{align}
Combining \eqref{prop:modulus_upper:mass_cont}, \eqref{prop:modulus_upper:mass_1}, and \eqref{prop:modulus_upper:mass_2}, we see that the total mass of $\rho$ is at most
\begin{align*}
\frac{2\pi}{w_{A}}+ \frac{2\pi c(\tau)}{w_{A}}+ \frac{8\pi a^2 c(\tau)}{w_{A}^{1/3}} \leq c_1(a,\tau) \bigg(\bigg(\log \frac{R}{r}\bigg)^{-1}+ \bigg(\log\frac{R}{r}\bigg)^{-1/3}\bigg).
\end{align*}
This completes the proof with $\psi(t)= c_1(a,\tau)(t^{-1}+t^{-1/3})$, $t>0$.
\end{proof}

\section{Proof of the main theorem}

We first establish a preliminary version of Theorem \ref{theorem:main}. The proof of Theorem \ref{theorem:main} is given at the end of the section.

\begin{theorem}\label{theorem:main:finite}
Let $\{U_i\}_{i\in I}$ be a finite collection of Jordan regions in $\widehat \C$ that have disjoint closures and are  $K$-quasiconformally pairwise circularizable for some $K\geq 1$. Let $f\colon \widehat \C\to \widehat \C$ be a homeomorphism that is conformal in $G= \widehat \C\setminus \bigcup_{i\in I} \bar{U_i}$ and maps $\bar {U_i}$ to a closed disk $K_i$ for each $i\in I$. Then $f|_G$ extends to an $H(K)$-quasiconformal homeomorphism of $\widehat \C$.
\end{theorem}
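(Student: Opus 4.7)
By Proposition~\ref{prop:quasidisk_extension} and the fact that the complementary components $\bar U_i$ of $G$ are disjoint $K$-quasidisks, it suffices to show that $f|_G$ is $\eta(K)$-quasi-M\"obius. Via V\"ais\"al\"a's classical characterization of quasi-M\"obius maps by their action on relative distances of continua pairs (the converse of Lemma~\ref{lemma:quasimobius:cross}, combined with~\ref{q:qs_qm}), this reduces to the following two-sided quantitative estimate: for every pair of disjoint non-degenerate continua $E,F\subset\bar G$, if $\Delta(f(E),f(F))$ is large then $\Delta(E,F)$ is large, and vice versa, with distortion depending only on $K$. The hypotheses are symmetric between the source and the target (disks are trivially $1$-quasiconformally pairwise circularizable $K$-quasidisks), so I only need to establish one direction.

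\textbf{Modulus setup and annulus reduction.} The family $\mathcal U=\{\bar U_i\}$ consists of uniformly $\tau(K)$-fat continua by Lemma~\ref{lemma:fat_quasidisk}, so Proposition~\ref{prop:modulus_lower} applied in the Loewner region $\widehat\C$ yields
\begin{align*}
\Mod_{\widehat\C,\mathcal U}\Gamma(E,F;\widehat\C)\ge\phi(\Delta(E,F)),
\end{align*}
and conformality of $f$ on $G$ together with Lemma~\ref{lemma:invariance} identifies this quantity with $\Mod_{\widehat\C,\{K_i\}}\Gamma(f(E),f(F);\widehat\C)$. The remaining task is to bound this modulus from above by a function of $\Delta(f(E),f(F))$. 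Assuming $\diam f(E)\le\diam f(F)$, pick $x\in f(E)$, set $r=\diam f(E)$ and $R=\dist(f(E),f(F))$, and note that every curve in $\Gamma(f(E),f(F);\widehat\C)$ crosses the annulus $A=A(x;r,R)$ of width $w_A\ge\log\Delta(f(E),f(F))$. Lemma~\ref{lemma:subannulus} produces a concentric subannulus $A'\subset A$ of width $w_{A'}\ge w_A^{1/9}$ falling into one of two alternatives: \textbf{(i)} at most one disk $K_{i_0}$ satisfies $w_{A'}(K_{i_0})>w_{A'}^{1/3}$, or \textbf{(ii)} two disks $K_{i_1},K_{i_2}$ each cross both boundary circles of $A'$.

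\textbf{Case analysis and main obstacle.} In Case~(i), Proposition~\ref{prop:modulus_upper} applied to the finite fat collection $\{K_i:i\ne i_0\}$ and the family $\Gamma(A')$ gives modulus at most $\psi(w_{A'})$; since enlarging the collection back by $K_{i_0}$ only decreases the transboundary modulus while every curve from $f(E)$ to $f(F)$ contains a sub-curve in $\Gamma(A')$, it follows that $\Mod_{\widehat\C,\{K_i\}}\Gamma(f(E),f(F);\widehat\C)\le\psi(w_{A'})$. Case~(ii) is the heart of the argument. Using the circularization hypothesis, choose a $K$-quasiconformal $h\colon\widehat\C\to\widehat\C$ sending the pair $U_{i_1},U_{i_2}$ to geometric disks $V_{i_1},V_{i_2}$, and set $g:=f\circ h^{-1}$, which is $K$-quasiconformal on $h(G)$. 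Lemma~\ref{lemma:schottky:limit} extends $g$ to a homeomorphism $\widetilde g\colon\widehat\C\to\widehat\C$ intertwining the Schottky groups of $\{V_{i_1},V_{i_2}\}$ and $\{K_{i_1},K_{i_2}\}$ and $K$-quasiconformal on the domain $\widetilde G$ obtained by reflecting $h(G)$ indefinitely. The complement of $\widetilde G$ in $\widehat\C$ is the union of the Schottky orbits of the remaining regions together with a two-point limit set (Lemma~\ref{lemma:schottky:limit}\ref{lemma:schottky:limit:1}); adjoining the two limit points as degenerate fat singletons yields collections $\widetilde{\mathcal U}$ (uniform quasidisks, since reflections are anti-M\"obius) and $\widetilde{\mathcal K}$ (honest disks) with closed union in $\widehat\C$. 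The decisive input is Proposition~\ref{prop:reflect}: every target-side orbit disk $\phi'(K_i)$ with $i\ne i_1,i_2$ satisfies $w_{A'}(\phi'(K_i))\le\beta_0$, so once $w_{A'}\ge\beta_0^3$ (otherwise $\Delta(f(E),f(F))$ is bounded and there is nothing to prove) the hypothesis of Proposition~\ref{prop:modulus_upper} is met for $\widetilde{\mathcal K}$, yielding $\Mod_{\widehat\C,\widetilde{\mathcal K}}\Gamma(A')\le\psi(w_{A'})$. Transferring to the source via Lemma~\ref{lemma:invariance} applied to $\widetilde g$ and combining with the Loewner lower bound from Proposition~\ref{prop:modulus_lower} for $\widetilde{\mathcal U}$ produces $\phi(\Delta(h(E),h(F)))\le K\psi(w_{A'})$, after which the quasi-M\"obius property of $h$ (\ref{q:qm_qc} and Lemma~\ref{lemma:quasimobius:cross}) converts this into the desired lower bound for $\Delta(E,F)$. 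The principal difficulty throughout is precisely Case~(ii): verifying that after the Schottky extension the orbit collections remain uniformly fat with closed union, that Lemma~\ref{lemma:invariance} legitimately applies to the global map $\widetilde g$ with these enlarged collections, and, most crucially, that Proposition~\ref{prop:reflect} prevents iterated reflections of the other disks from becoming new bridges relative to $A'$. This reflection mechanism has no analogue in Bonk's approach and is exactly what replaces his uniform relative separation hypothesis, as the two problematic large disks are absorbed into the Schottky group rather than excluded by ambient geometry.
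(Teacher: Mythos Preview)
Your Case~(ii) is essentially the paper's argument and is fine. There are, however, two genuine problems elsewhere.

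\textbf{Case~(i) gap.} Your claim that ``enlarging the collection back by $K_{i_0}$ only decreases the transboundary modulus'' is false. Transboundary modulus is not monotone in the collection $\mathcal K$: if $\rho$ is admissible for $\{K_i:i\neq i_0\}$, extending it by $\rho(K_{i_0})=0$ need not yield an admissible distribution for the full collection, because the continuous part now integrates over the smaller set $\widehat\C\setminus K$ rather than $\widehat\C\setminus(K\setminus K_{i_0})$, and the portion of $\int_\gamma\rho$ lost inside $K_{i_0}$ is not recovered. Indeed, the whole point of the ``bridge'' phenomenon (Figure~\ref{figure:modulus_large}) is that a single large disk can make $\Mod_{\widehat\C,\{K_i\}}\Gamma(f(E),f(F);\widehat\C)$ large even when $\Delta(f(E),f(F))$ is large. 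The paper avoids this by removing $K_{i_0}$ from the collection \emph{and simultaneously} restricting the domain to $\Omega'=\widehat\C\setminus K_{i_0}$ and the curve family to $\Gamma(E',F';\Omega')$; the price is that the Loewner lower bound must then be taken in the $K$-quasidisk $\Omega=\widehat\C\setminus\bar U_{i_0}$ rather than in $\widehat\C$. Your setup, with $\Gamma(E,F;\widehat\C)$ and the full collection $\mathcal U$ fixed from the outset, cannot accommodate this and must be reorganized.

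\textbf{Reduction step.} The ``converse of Lemma~\ref{lemma:quasimobius:cross}'' is not a statement proved or cited in the paper, and it is not clear that a two-sided bound on $\Delta(f(E),f(F))$ versus $\Delta(E,F)$ for arbitrary continua in $\bar G$ directly yields quasi-M\"obius without further input. The paper instead normalizes three points, uses the $1$-$\llc$ property of the circle domain $f(G)$ to build explicit continua $E',F'$ through prescribed points, and proves weak quasisymmetry of $f|_G$ directly via V\"ais\"al\"a's theorem on connected doubling spaces. Your symmetry observation (that the argument applies equally to $f^{-1}$) is correct but is not actually needed in the paper's route, which bounds only one direction and gets weak quasisymmetry from it.
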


The main ingredient for the proof of Theorem \ref{theorem:main:finite} is the next lemma, which gives an essential distortion estimate. Recall that a set $A$ in $\widehat \C$ separates two sets $E,F$ in $\widehat \C$ if $E$ and $F$ lie in distinct components of $\widehat \C\setminus A$.

\begin{lemma}\label{lemma:main:finite}
Let $f$ be as in Theorem \ref{theorem:main:finite}. Let $A$ be an annulus in $\widehat \C$ such that $f^{-1}(A)$ separates two disjoint non-degenerate continua $E,F\subset  G$. 
Then there exists an increasing function $\theta \colon (0,\infty)\to (0,\infty)$ that depends only on $K$ such that
$$w_A\leq \theta(\Delta(E,F)).$$
\end{lemma}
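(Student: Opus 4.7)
The plan is to bound $w_A$ by comparing two transboundary moduli of a separating curve family via the conformality of $f$. Set $E'=f(E)$, $F'=f(F)$, $\mathcal U=\{U_i\}_{i\in I}$, and $\mathcal K=\{K_i\}_{i\in I}$; the pairwise-circularization hypothesis makes each $U_i$ a $K$-quasidisk, while each $K_i$ is a geometric disk, so both collections are uniformly fat by Lemma \ref{lemma:fat_quasidisk}. Since $f$ is conformal on $G$, Lemma \ref{lemma:invariance} yields
\[
\Mod_{\widehat\C,\mathcal U}\Gamma(E,F;\widehat\C)=\Mod_{\widehat\C,\mathcal K}\Gamma(E',F';\widehat\C),
\]
and Proposition \ref{prop:modulus_lower} bounds the left-hand side below by a decreasing function of $\Delta(E,F)$ depending only on $K$. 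Because $A$ separates $E'$ from $F'$, the right-hand side is at most $\Mod_{\widehat\C,\mathcal K}\Gamma(A)$. If Proposition \ref{prop:modulus_upper} applied to $\Gamma(A)$ directly, the matching upper bound $\psi(w_A)$ would close the argument; the obstruction is that some $K_i$ may span $A$.

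To overcome this, I apply Lemma \ref{lemma:subannulus} to $A$ and $\mathcal K$, producing a concentric sub-annulus $A'\subset A$ with $w_{A'}\geq\min\{w_A,w_A^{1/9}\}$ in one of two cases: (i) all but at most one $K_i$ satisfy $w_{A'}(K_i)\leq w_{A'}^{1/3}$, or (ii) two disks $K_{i_1},K_{i_2}$ intersect both boundary components of $A'$. Note that $f^{-1}(A')$ still separates $E$ and $F$. In the exception-free branch of (i), Proposition \ref{prop:modulus_upper} applies to $\Gamma(A')$ directly and closes the argument. In the remaining branches I invoke the circularization hypothesis on the problematic pair: $(U_{i_1},U_{i_2})$ in case (ii), or the lone exception paired with any other $U_j$ in case (i). A $K$-quasiconformal $h\colon\widehat\C\to\widehat\C$ sends this pair to geometric disks $V_{i_1},V_{i_2}$, and then $g=f\circ h^{-1}$ is $K$-quasiconformal on $h(G)$ with $g(V_{i_j})=K_{i_j}$. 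By Lemma \ref{lemma:schottky:limit}, $g$ extends by iterated reflection in $V_{i_1},V_{i_2}$ and $K_{i_1},K_{i_2}$ to a $K$-quasiconformal homeomorphism $\widetilde g$ of the corresponding Schottky-extended domains; the complementary components are the Schottky orbits of the remaining $V_i$ and $K_i$, plus a two-point limit set. The target orbits are geometric disks and the source orbits are $K^2$-quasidisks, so both families are uniformly fat.

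Assuming $w_{A'}\geq\beta_0^3$ (otherwise the lemma is trivial), Proposition \ref{prop:reflect} guarantees $w_{A'}(\phi(K_i))\leq\beta_0\leq w_{A'}^{1/3}$ for every target orbit disk; thus Proposition \ref{prop:modulus_upper} applied on the enlarged target collection $\mathcal K_*$ gives $\Mod_{\widehat\C,\mathcal K_*}\Gamma(A')\leq\psi(w_{A'})$. Transferring through $\widetilde g$ via Lemma \ref{lemma:invariance} and then bounding below by the Loewner property on the enlarged source collection $\mathcal V_*$ produces a decreasing function of $\Delta(h(E),h(F))$ bounded above by $K\psi(w_{A'})$. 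Since $h$ is $\eta$-quasi-M\"obius by property \ref{q:qm_qc}, Lemma \ref{lemma:quasimobius:cross} converts $\Delta(h(E),h(F))$ back into a function of $\Delta(E,F)$, and inverting the decreasing $\psi$ together with the bound $w_{A'}\geq w_A^{1/9}$ produces $w_A\leq\theta(\Delta(E,F))$ with $\theta$ increasing and depending only on $K$. The most delicate point is case (i) with a lone exception, where only one of the two circularized disks bridges $A'$ so that Proposition \ref{prop:reflect} does not immediately apply; I expect to resolve this either by reapplying Lemma \ref{lemma:subannulus} to $\mathcal K\setminus\{K_{i_0}\}$ (in the nontrivial outcome obtaining a genuine pair of bridging disks and reducing to case (ii)) or by observing that the reflection orbit of the non-bridging partner comprises only uniformly small disks that can be absorbed into the Proposition \ref{prop:modulus_upper} estimate.
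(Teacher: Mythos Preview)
Your overall architecture is right, and your treatment of case (ii) matches the paper's: circularize the bridging pair with $h$, extend $g=f\circ h^{-1}$ by Schottky reflections via Lemma \ref{lemma:schottky:limit}, control the orbit widths by Proposition \ref{prop:reflect}, then sandwich transboundary modulus between Proposition \ref{prop:modulus_lower} on the source and Proposition \ref{prop:modulus_upper} on the target.

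The gap is exactly the one you flag at the end: case (i) with a single exceptional index $i_0$. Your plan there---pair $K_{i_0}$ with an arbitrary second disk and run the Schottky argument---does not work, because Proposition \ref{prop:reflect} genuinely requires \emph{both} disks to span $A'$; without that, the reflected copies of the other $K_i$ need not have small width relative to $A'$, and your upper bound collapses. Your two proposed rescues are not solid either: reapplying Lemma \ref{lemma:subannulus} to $\mathcal K\setminus\{K_{i_0}\}$ can land you back in case (i) with a \emph{new} lone exception on a smaller annulus, with no termination guarantee; and there is no result in the paper controlling the orbit of a non-bridging partner.

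The paper's fix is much simpler and avoids Schottky entirely in case (i). Let $I_0=\{i_0\}$ (or $I_0=\emptyset$ if there is no exception), set $\Omega=\widehat\C\setminus\bigcup_{i\in I_0}\bar{U_i}$ and $\Omega'=\widehat\C\setminus\bigcup_{i\in I_0}K_i$, and work with the \emph{reduced} collections $\mathcal U=\{\bar U_i\}_{i\in I\setminus I_0}$, $\mathcal K'=\{K_i\}_{i\in I\setminus I_0}$ and the curve family $\Gamma(E,F;\Omega)$. Since every curve in $\Gamma(E',F';\Omega')$ avoids $K_{i_0}$ by construction, Proposition \ref{prop:modulus_upper} applies to the reduced family with no exceptional disk present. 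On the source side, $\Omega$ is either $\widehat\C$ or the complement of a single closed $K$-quasidisk---itself a $K$-quasidisk and hence a Loewner region with Loewner function depending only on $K$---so Proposition \ref{prop:modulus_lower} still furnishes the lower bound $\phi_1(\Delta(E,F))$. Conformal invariance (Lemma \ref{lemma:invariance}) links the two, and you are done in case (i) without any reflection machinery.
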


\begin{proof}
By Lemma \ref{lemma:subannulus}, there exists a subannulus $A'$ of $A$ that is concentric with $A$ such that $w_{A'}\geq \min\{w_A,w_A^{1/9}\}$ and one of the alternatives \ref{lemma:subannulus:1}, \ref{lemma:subannulus:2} holds. 

\textit{Case (i).} We first assume that alternative \ref{lemma:subannulus:1} of Lemma \ref{lemma:subannulus} holds. That is, $w_{A'}(K_i)\leq w_{A'}^{1/3}$ for all $i\in I\setminus I_0$, where $I_0=\emptyset$ or $I_0$ contains precisely one index of $I$. Let $\mathcal K'=\{K_i\}_{i\in I\setminus I_0}$, $\Omega'=\widehat \C\setminus \bigcup_{i\in I_0}K_i$,  $\mathcal U= \{\bar {U_i}\}_{i\in I\setminus I_0}$, and $\Omega=\widehat \C\setminus \bigcup_{i\in I_0}\bar{U_i}$. By the conformal invariance of transboundary modulus (Lemma \ref{lemma:invariance}) we have
\begin{align}\label{lemma:main:finite:qc}
\Mod_{\Omega, \mathcal U} \Gamma(E,F;\Omega) =\Mod_{\Omega', \mathcal K'} \Gamma(E',F';\Omega').
\end{align}
where $E'=f(E)$ and $F'=f(F)$. See the right part of Figure \ref{figure:modulus_large} for an illustration of $\Gamma(E',F';\Omega')$ when $I_0$ contains one index. Observe that
\begin{align}\label{lemma:main:finite:mod}
\Mod_{\Omega', \mathcal K'} \Gamma(E',F';\Omega') = \Mod_{\widehat \C, \mathcal K'} \Gamma(E',F';\Omega'),
\end{align}
since each curve in $\Gamma(E',F';\Omega')$ does not intersect the set $\widehat \C\setminus \Omega'$. By assumption, $f^{-1}(A)$ separates $E$ and $F$, so $A$ separates $E'$ and $F'$. Therefore, the subannulus $A'$ also separates $E'$ and $F'$. This implies that each curve in $\Gamma(E',F';\Omega')$ has a subcurve in $\Gamma(A')$, so
\begin{align}\label{lemma:main:finite:subordinate}
\Mod_{\widehat \C, \mathcal K'} \Gamma(E',F';\Omega') \leq \Mod_{\widehat \C, \mathcal K'}\Gamma(A').
\end{align}
By Proposition \ref{prop:modulus_upper}, based on the fatness of disks, there exists a universal decreasing homeomorphism $\psi_1\colon (0,\infty)\to (0,\infty)$ such that 
\begin{align}\label{lemma:main:finite:psi}
\Mod_{\widehat \C, \mathcal K'}\Gamma(A')\leq \psi_1(w_{A'})\leq \psi_1(\min\{w_A, w_A^{1/9}\}).
\end{align}
Combining \eqref{lemma:main:finite:qc}, \eqref{lemma:main:finite:mod}, \eqref{lemma:main:finite:subordinate}, and \eqref{lemma:main:finite:psi}, we obtain
\begin{align}\label{lemma:main:finite:upper}
\Mod_{\Omega, \mathcal U} \Gamma(E,F;\Omega)\leq \widetilde \psi_1(w_A),
\end{align}
where $\widetilde \psi_1(t)=\psi_1(\min\{t,t^{1/9}\})$, which is a decreasing homeomorphism of $(0,\infty)$.

The sets $U_i$, $i\in I$, are $K$-quasidisks by the assumption on quasiconformal circularization. By Lemma \ref{lemma:fat_quasidisk}, the sets $\bar{U_i}$, $i\in I$, are $\tau(K)$-fat. Moreover, either $\Omega=\widehat \C$, or $\Omega=\widehat \C\setminus \bar{U_{i_0}}$ for some $i_0\in I$; in the latter case $\Omega$ is a $K$-quasidisk. In both cases $\Omega$ is a Loewner region. We now apply Proposition \ref{prop:modulus_lower}, which yields a decreasing function $\phi_1\colon(0,\infty)\to (0,\infty)$ that depends only on $K$ with the property that
\begin{align*}
\Mod_{\Omega, \mathcal U} \Gamma(E,F;\Omega)\geq \phi_1(\Delta(E,F)).
\end{align*}
Combining this with \eqref{lemma:main:finite:upper}, we obtain $\phi_1(\Delta(E,F)) \leq \widetilde \psi_1(w_A)$. Since $\widetilde \psi_1$ is a decreasing homeomorphism of $(0,\infty)$, the function $\theta_1= (\widetilde \psi_1)^{-1}\circ \phi_1$ is increasing and we have
\begin{align}\label{lemma:main:finite:case1}
w_A \leq \theta_1 (\Delta(E,F)).
\end{align}

\textit{Case (ii).} Suppose that alternative \ref{lemma:subannulus:2} of Lemma \ref{lemma:subannulus} holds. That is, there exist $i_1,i_2\in I$ such that the disks $K_{i_1}$ and $K_{i_2}$ intersect both boundary components of the annulus $A'$. By assumption, there exists a $K$-quasiconformal homeomorphism $h\colon \widehat \C\to \widehat \C$ that maps $U_{i_1}$ and $U_{i_2}$ to disjoint disks $V_{i_1}$ and $V_{i_2}$, respectively. We set $V_i=h(U_i)$, $i\in I$. Consider the map $g= f\circ h^{-1}$, which maps $\bar{V_{i_1}}$ and $\bar{V_{i_2}}$ to $K_{i_1}$ and $K_{i_2}$, respectively, and is $K$-quasiconformal on $h(G)$. 

Let $\Gamma$ be the Schottky group of $\{\bar{V_{i_1}}, \bar{V_{i_2}}\}$, which is generated by reflections $\xi_1,\xi_2$ in $\bar{V_{i_1}}, \bar{V_{i_2}}$, respectively.  Let $\xi_j'$ be the reflection in $K_{i_j}$, $j=1,2$. The Schottky group $\Gamma'$ of $\{K_{i_1}, K_{i_2}\}$ is generated by $\xi_1',\xi_2'$. If $\xi=\xi_{i_1}\circ \dots\circ \xi_{i_n}\in \Gamma$ is expressed in reduced form, then $\xi'$ is defined to be $\xi_{i_1}'\circ \dots\circ \xi_{i_n}'$. Let $S=\widehat \C\setminus (V_{i_1}\cup V_{i_2})$ and $S_\infty= \bigcup_{\xi \in \Gamma}\xi(S)$. By Lemma \ref{lemma:schottky:limit} \ref{lemma:schottky:limit:1}, $\widehat \C\setminus S_\infty$ consists of two points $p_1,p_2$. By Lemma \ref{lemma:schottky:limit} \ref{lemma:schottky:limit:2} the map $g|_{S}$ extends to a homeomorphism $\widetilde g\colon \widehat \C\to \widehat \C$ such that $\xi' \circ \widetilde g = \widetilde g\circ \xi$ for each $\xi \in \Gamma$; see Figure \ref{figure:extension}. Moreover, parts \ref{lemma:schottky:limit:3} and \ref{lemma:schottky:limit:4} of Lemma \ref{lemma:schottky:limit} imply that the extension $\widetilde g$ is $K$-quasiconformal in the domain 
$$(\widehat\C\setminus \{p_1,p_2\}) \setminus \bigcup \{\xi(\bar{V_i}): i\in I\setminus \{i_1,i_2\},\,\, \xi\in \Gamma\}.$$ 

By Proposition \ref{prop:reflect} we have $w_{A'}(\xi'(K_i))\leq \beta_0$ for each $i\in I\setminus \{i_1,i_2\}$ and each $\xi'\in \Gamma'$. Note that if $w_{A'}^{1/3}>\beta_0$, then $w_{A'}(\xi'(K_i)) \leq w_{A'}^{1/3}$. On the other hand, if $w_{A'}^{1/3}\leq \beta_0$, then $w_{A'}(\xi'(K_i))\leq w_{A'} \leq \beta_0^{2}w_{A'}^{1/3}$. Hence, in both cases we have $w_{A'}(\xi'(K_i))\leq a\cdot w_{A'}^{1/3}$ for $a=\max\{1,\beta_0^2\}$, as required in Proposition \ref{prop:modulus_upper}. We define $\mathcal K'= \{ \xi'(K_i): i\in I\setminus \{i_1,i_2\},\,\, \xi'\in \Gamma'\}\cup \{p_1',p_2'\}$, where $p_j'=\widetilde g(p_j)$, $j=1,2$. By Proposition \ref{prop:modulus_upper}, there exists a decreasing homeomorphism $\psi_2\colon (0,\infty)\to (0,\infty)$ that depends only on $a$ (which is a universal constant) such that
\begin{align}\label{lemma:main:finite:psi2}
\Mod_{\widehat \C,\mathcal K'}\Gamma(A') \leq \psi_2(w_{A'})\leq \psi_2(\min\{w_A, w_A^{1/9}\}).
\end{align}

Each curve in $\Gamma(E',F';\widehat \C)$ has a subcurve in $\Gamma(A')$, so
\begin{align}\label{lemma:main:finite:subordinate2}
\Mod_{\widehat \C,\mathcal K'}\Gamma(E',F';\widehat \C)\leq  \Mod_{\widehat \C,\mathcal K'}\Gamma(A').
\end{align}
We let $\mathcal V=\{\xi(\bar {V_i}): i\in I\setminus \{i_1,i_2\},\,\, \xi\in \Gamma\}\cup \{p_1,p_2\}$, which satisfies $\widetilde g(\mathcal V)=\mathcal K'$ by the equivariance of $\widetilde g$ with respect to $\Gamma$. Since $\widetilde g$ is $K$-quasiconformal outside the union of the sets in $\mathcal V$, the quasi-invariance of transboundary modulus (Lemma \ref{lemma:invariance}) implies that
\begin{align}\label{lemma:main:finite:qc2}
\Mod_{\widehat \C,\mathcal V}\Gamma(h(E),h(F);\widehat \C)\leq K \cdot  \Mod_{\widehat \C,\mathcal K'}\Gamma(E',F';\widehat \C).
\end{align}
Note that $V_i$ and $\xi(V_i)$ are $K^2$-quasidisks for each $i\in I$ and $\xi\in \Gamma$, so the elements of $\mathcal V$ are $\tau(K)$-fat by Lemma \ref{lemma:fat_quasidisk}. By Proposition \ref{prop:modulus_lower} there exists a decreasing function $\phi_2\colon (0,\infty)\to (0,\infty)$ that depends only on $K$ such that
\begin{align}\label{lemma:main:finite:phi2}
\Mod_{\widehat \C,\mathcal V}\Gamma(h(E),h(F);\widehat \C)\geq \phi_2(\Delta(h(E),h(F))).
\end{align}
Therefore, by combining \eqref{lemma:main:finite:psi2}, \eqref{lemma:main:finite:subordinate2}, \eqref{lemma:main:finite:qc2}, and \eqref{lemma:main:finite:phi2}, we obtain
\begin{align*}
 \phi_2(\Delta(h(E),h(F)))\leq \widetilde \psi_2(w_A),
\end{align*}
where $\widetilde \psi_2(t)=K\cdot \psi_2(\min\{t,t^{1/9}\})$, which is a decreasing homeomorphism of $(0,\infty)$.

By property \ref{q:qm_qc}, the $K$-quasiconformal map $h$ is $\eta$-quasi-M\"obius, where $\eta$ depends only on $K$. By Lemma \ref{lemma:quasimobius:cross}, there exists a homeomorphism $\widetilde \eta\colon [0,\infty)\to [0,\infty)$ that depends only on $\eta$ such that 
$$\Delta(h(E),h(F))\leq \widetilde \eta(\Delta(E,F)).$$
Therefore, 
$$\phi_2(\widetilde \eta(\Delta(E,F))) \leq \phi_2(\Delta(h(E),h(F)))\leq \widetilde \psi_2(w_A).$$
The function $\theta_2= (\widetilde \psi_2)^{-1}\circ \phi_2\circ \widetilde \eta$ is increasing and we have
\begin{align}\label{lemma:main:finite:case2}
w_A \leq \theta_2 (\Delta(E,F)).
\end{align}

We finally note that by \eqref{lemma:main:finite:case1} and \eqref{lemma:main:finite:case2} the conclusion of the lemma holds in both Case (i) and Case (ii) for the increasing function $\theta=\max\{\theta_1,\theta_2\}$.
\end{proof}

We say that a metric space $(X,d)$ is \textit{linearly locally connected}\index{linearly locally connected space} or $\llc$\index{LLC space} for short if there exists a constant $M\geq 1$ such that for each ball $B_d(a,r)$ in $X$ the following two conditions hold.
\begin{enumerate}[label=\normalfont($\llc_{\arabic*}$)]
\item\label{llc1} For every $x,y\in B_d(a,r)$ there exists a continuum $E\subset B_d(a,Mr)$ that contains $x$ and $y$.
\item\label{llc2} For every $x,y\in X\setminus B_d(a,r)$ there exists a continuum $E\subset X\setminus B_d(a,r/M)$ that contains $x$ and $y$.
\end{enumerate}
In this case we say that $X$ is $M$-$\llc$. Every circle domain and every Schottky set is $1$-$\llc$. In the proof of Theorem \ref{theorem:main:finite} below we use this fact for finitely connected circle domains, which is established in \cite{Bonk:uniformization}*{Lemma 10.1}. See also \cite{BonkKleinerMerenkov:schottky}*{Prop.\ 2.2}, \cite{Rehmert:thesis}*{Prop.\ 4.1.3}, and the arXiv version of \cite{KarafylliaNtalampekos:gromov_hyperbolic}*{Lemma 2.13} for proofs in the general cases. 

A metric space $(X,d)$ is \textit{doubling} if there exists a constant $M\geq 1$ such that for every $d>0$, each set of diameter $d$ can be covered by at most $M$ sets of diameter at most $d/2$. Subsets of doubling spaces are also doubling with the same constant. We will use the fact that the Riemann sphere $\widehat \C$ and its subsets are doubling.

\begin{proof}[Proof of Theorem \ref{theorem:main:finite}]
By precomposing and postcomposing with M\"obius transformations, we may assume that $0,1,\infty\in G= \widehat \C\setminus \bigcup_{i\in I}\bar{U_i}$ and that $f$ fixes these three points. We will show that $f|_{G}$ is $\eta$-quasisymmetric and thus quasi-M\"obius (by \ref{q:qs_qm}) with a distortion function $\eta$ that depends only on $K$. Assuming this, Proposition \ref{prop:quasidisk_extension} applies to show that $f|_{G}$ extends to an $H(K)$-quasiconformal homeomorphism of $\widehat \C$, as desired.

We perform a further reduction. We use the notation $x'=f(x)$ for $x\in G$. Since $G$ is connected and $G,f(G)$ are doubling with a uniform parameter, being subsets of $\widehat \C$, by a theorem of V\"ais\"al\"a \cite{Heinonen:metric}*{Theorem 10.19}, it suffices to show that $f|_{G}$ is \textit{$H(K)$-weakly quasisymmetric}.  That is, given points $x,y,z\in  G$ with $\sigma(x,y)\leq \sigma(x,z)$, we have 
\begin{align}\label{theorem:main:finite:wqs}
\sigma(x',y')\leq H\sigma(x',z')
\end{align}
for a constant $H\geq 1$ depending only on $K$.

Let $x,y,z\in  G$ with 
\begin{align}\label{theorem:main:finite:assumption}
\sigma(x,y)\leq \sigma(x,z).
\end{align}
Without loss of generality, $x\neq z$, otherwise \eqref{theorem:main:finite:wqs} holds trivially. Let $a_1=0$, $a_2=1$, $a_3=\infty$. Note that $\sigma(a_i,a_j)\geq \pi/2$ for $i\neq j$. Thus, there exists at most one index $j\in \{1,2,3\}$ such that $\sigma(x',a_j')< \pi/4$. It follows that at least two indices $j\in \{1,2,3\}$ satisfy $\sigma(x',a_j')\geq \pi/4$. Among these two indices we choose one of them that satisfies $\sigma(y,a_j)\geq \pi/4$; again, the reverse inequality holds for at most one index. Without loss of generality, $j=1$, so we have
\begin{align}\label{theorem:main:finite:separation}
\sigma(x',a_1') \geq \pi/4 \quad \text{and} \quad \sigma(y,a_1)\geq \pi/4.
\end{align}

Observe that the image $f(G)$ is a circle domain, so it is $1$-$\llc$. Since $x',z'\in B(x', 2\sigma(x',z'))$, there exists a continuum $E'\subset B(x',2\sigma(x',z'))\cap f(G)$ joining $x'$ and $z'$. We define
$$\delta=\min \{\sigma(x',y'), \pi/4\}$$
and observe that
\begin{align}\label{theorem:main:finite:delta}
\sigma(x',y')\leq 4\delta.
\end{align}
We have $y',a_1'\notin B(x',\delta)$ in view of \eqref{theorem:main:finite:separation}. Thus, there exists a continuum $F'\subset  f(G)\setminus B(x', \delta)$ joining $y'$ and $a_1'$. Suppose that $3\sigma(x',z')\geq \delta/2$. Then $\sigma(x',y')\leq 4\delta \leq 24 \sigma(x',z')$, so \eqref{theorem:main:finite:wqs} holds with $H\geq 24$. Finally, we assume that $3\sigma(x',z')<\delta/2$. 

Consider the annulus $A=A(x'; 3\sigma(x',z'),\delta/2)$. The continua $E',F'$ lie in distinct components of $\widehat \C\setminus A$. Let $E=f^{-1}(E')$ and $F=f^{-1}(F')$. 
The continuum $E$ contains the points $x,z$ and the continuum $F$ contains the points $y,a_1$, so $\diam(F)\geq \pi/4$ by \eqref{theorem:main:finite:separation}. Observe that
\begin{align*}
\Delta(E,F)=\frac{\dist(E,F)}{\min\{\diam(E),\diam(F)\}}\leq \frac{\sigma(x,y)}{\min\{\sigma(x,z), \pi/4 \}} \leq 4,
\end{align*}
where the last inequality follows from \eqref{theorem:main:finite:assumption}.  By Lemma \ref{lemma:main:finite} there exists an increasing function $\theta\colon (0,\infty)\to (0,\infty)$ that depends only on $K$ such that 
\begin{align*}
w_A\leq \theta(\Delta(E,F)) \leq \theta(4).
\end{align*}
Therefore, by \eqref{theorem:main:finite:delta} we have
\begin{align*}
\sigma(x',y')\leq 4\delta = 8\cdot (\delta/2)=8e^{w_A}\cdot 3\sigma(x',z')\leq 24e^{\theta(4)}\sigma(x',z'). 
\end{align*}
This completes the proof with $H=24e^{\theta(4)}$.
\end{proof}

\begin{proof}[Proof of Theorem \ref{theorem:main}]
Suppose that $\{U_i\}_{i\in I}$ is a collection of at least two disjoint Jordan regions that are uniformly quasiconformally pairwise circularizable. Without loss of generality, $I\subset \N$. If $I$ is finite, consider the regions $U_i(n)$, $n\in \N$, $i\in I$, as provided by Proposition \ref{prop:disjoint_closures} and define $I_n=I$, $n\in \N$. If $I$ is countable, suppose that $I=\N$. Let $n\in \N$ be fixed. By Proposition \ref{prop:disjoint_closures}, for each $i\in \{1,\dots,n\}\eqqcolon I_n$, where $n\geq 2$, there exists a Jordan region $U_i(n)\subset U_i$ such that the regions $\{U_i(n)\}_{i\in I_n}$ are $H'(K)$-quasiconformally pairwise circularizable and have disjoint closures. Also, set $U_1(1)=U_1$. Moreover by Proposition \ref{prop:disjoint_closures} \ref{prop:disjoint_closures:1}, for each fixed $i\in I$ the regions $U_i(n)$ can be chosen so that for each compact set $E\subset U_i$ we have $E\subset U_i(n)$ for all sufficiently large $n\in \N$. This last statement also holds when $I$ is finite.

By Koebe's uniformization theorem \cite{Conway:complex2}*{Theorem 15.7.9}, for each $n\in \N$, there exists a conformal map $f_n$ from the finitely connected domain $G_n=\widehat \C\setminus \bigcup_{i\in I_n}\bar{U_i(n)}$ onto a circle domain in $\widehat \C$. The map $f_n$ extends uniquely to a homeomorphism of the closures; see \cite{Conway:complex2}*{Theorem 15.3.4}. In addition, with the aid of the Sch\"onflies theorem, we can extend $f_n$ non-uniquely to a homeomorphism of $\widehat{\C}$. By Theorem \ref{theorem:main:finite}, $f_n|_{G_n}$ extends to an $H(K)$-quasiconformal homeomorphism $F_n$ of $\widehat \C$. 

We now normalize $F_n$ as follows. We consider three points $a,b,c \in \widehat \C\setminus \bigcup_{i\in I} U_i \subset \widehat \C \setminus \bigcup_{i\in I_n} U_i(n) =\bar{G_n}$. By postcomposing $F_n$ with a M\"obius transformation, we may assume that $F_n$ fixes the points $a,b,c$. The sequence of $H(K)$-quasiconformal maps $\{F_n\}_{n\in \N}$ is normal \cite{LehtoVirtanen:quasiconformal}*{II.5} and, by passing to a subsequence, we assume that it converges uniformly to an $H(K)$-quasiconformal map $F\colon \widehat \C\to \widehat \C$. 

For each fixed $i\in I$, $F_n( U_i(n))$ is a disk for all $n\geq i$. Let $B_i$ be a compact Hausdorff limit of the closed disks $\bar{F_n(U_i(n))}$, $n\geq i$. Each $z\in U_i$ lies in $U_i(n)$ for all sufficiently large $n\in \N$, so we see that $F(U_i)\subset B_i$. Conversely, each point of $B_i$ is a limit of a sequence of points in $F_n(U_i(n))$, $n\in \N$, so $B_i\subset F(\bar{U_i})$. Therefore, $F|_{\bar{U_i}}$ is a homeomorphism from $\bar{U_i}$ onto $B_i$. A consequence of the invariance of domain theorem is that $F$ maps $U_i$ onto the interior $D_i$ of $B_i$. Since $B_i$ is a Hausdorff limit of closed disks, we conclude that $D_i$ is an open disk. Moreover, the disks $D_i$, $i\in I$, are disjoint, since $F$ is a homeomorphism. Thus, the set $T=\widehat \C\setminus \bigcup_{i\in I}D_i$ is a Schottky set and $F$ maps $S=\widehat \C\setminus \bigcup_{i\in I}U_i$ onto $T$.

Next, we show that $F$ is $1$-quasiconformal on $S$. For each $n\in \N$ the map $F_n$ is conformal on $G_n$. Each boundary component of $G_n$ is a quasicircle, so it has measure zero. Hence, $F_n$ is $1$-quasiconformal on $\bar {G_n}$. Since $\bar G_n\supset S$, $F_n$ is $1$-quasiconformal on $S$. By Lemma \ref{lemma:lsc}, the limit $F$ is $1$-quasiconformal on $S$.

We finally prove the ultimate uniqueness statement in Theorem \ref{theorem:main}. Let $\widetilde F$ be another quasiconformal map of $\widehat \C$ that maps $S$ onto a Schottky set $\widetilde T$ and is $1$-quasiconformal on $S$. Let $g=(\widetilde F\circ F^{-1})|_T$. We need to show that $g$ is the restriction of a M\"obius transformation of $\widehat \C$. When $T$ has area zero, this follows from a theorem of Bonk--Kleiner--Merenkov \cite{BonkKleinerMerenkov:schottky}*{Theorem 1.1}. 

Suppose that $T$ has positive area. Let $\xi_i$ be the reflection in $\bar{D_i}$ and $\xi_i'$ be the reflection in $g(\bar{D_i})$, $i\in I$. Let $\Gamma_{T}$ be the Schottky group of $\{\bar{D_i}\}_{i\in I}$, which is generated by $\{\xi_i\}_{i\in I}$. For $\xi=\xi_{i_1}\circ \dots\circ \xi_{i_n}$, expressed in reduced form, we define $\xi'=\xi_{i_1}'\circ \dots\circ \xi_{i_n}' \in \Gamma_{\widetilde T}$. As proved in \cite{BonkKleinerMerenkov:schottky}*{Prop.\ 5.5}, $g$ has an extension to a quasiconformal homeomorphism of $\widehat \C$, which we also denote by $g$, that is equivariant with respect to $\Gamma_T$; that is $g\circ \xi = \xi'\circ g$ for each $\xi\in \Gamma_T$.

The proof of the main result of \cite{BonkKleinerMerenkov:schottky} (see {p.~426}), even in the case that $T$ has positive area, implies that the extension $g$ is $1$-quasiconformal on $\widehat \C\setminus T_\infty$, where $T_\infty=\bigcup_{\xi\in \Gamma_T} \xi(T)$. We sketch the argument for the convenience of the reader. Since $g$ is quasiconformal in $\widehat \C$, it is differentiable with invertible derivative at almost every point in $\widehat \C\setminus T_\infty$. Each point of $\widehat \C\setminus T_\infty$ lies in a sequence of nested closed balls $\{A_j\}_{j\in \N}$ with diameters shrinking to $0$ such that each ball is the image of a ball in the collection $\{\bar{D_i}\}_{i\in I}$ under an element of $\Gamma_T$; see \cite{BonkKleinerMerenkov:schottky}*{Lemma 3.4}. The equivariance of $g$ with respect to $\Gamma_T$ implies that $g(A_j)$ is also a ball for each $j\in \N$. Thus, at points of differentiability in $\widehat \C\setminus T_\infty$, $g$ maps infinitesimal balls to infinitesimal balls, which implies that it is $1$-quasiconformal; see \cite{BonkKleinerMerenkov:schottky}*{Lemma 6.1} for a precise statement.

By Lemma \ref{lemma:conformal_composition}, $F^{-1}$ is $1$-quasi\-conformal on $T$ and $g|_T=\widetilde F\circ F^{-1}$ is $1$-quasi\-conformal on $T$. Also, by Lemma \ref{lemma:conformal_composition} and Remark \ref{remark:conformal_composition}, $\xi'\circ g\circ \xi^{-1}$ is $1$-quasiconformal on $\xi(T)$ for each $\xi\in \Gamma_T$. Thus, $g$ is $1$-quasiconformal on $\xi(T)$ for each $\xi\in \Gamma_T$. We conclude that $g$ is $1$-quasiconformal on $T_\infty$. Altogether, $g$ is $1$-quasiconformal on $\widehat \C$. By Weyl's lemma \cite{LehtoVirtanen:quasiconformal}*{Theorem I.5.1}, $g$ is conformal on $\widehat \C$ so it is a M\"obius transformation. 
\end{proof}

\bibliography{../../../biblio} 

\end{document}